\newtheorem{theorem}{Theorem}[section]
\newtheorem{lemma}[theorem]{Lemma}
\numberwithin{equation}{section}
\begin{document}

\title[convergence of an adaptive finite element DtN method]{Convergence of an
adaptive finite element DtN method for the elastic wave scattering problem}

\author{Peijun Li}
\address{Department of Mathematics, Purdue University, West Lafayette, Indiana
47907, USA.}
\email{lipeijun@math.purdue.edu}

\author{Xiaokai Yuan}
\address{Department of Mathematics, Purdue University, West Lafayette, Indiana
47907, USA.}
\email{yuan170@math.purdue.edu}


\subjclass[2010]{65N12, 65N15, 65N30, 78A45}

\keywords{Elastic wave equation, obstacle scattering problem, adaptive finite
element method, Dirichlet-to-Neumann map, transparent boundary condition, a
posteriori error estimate.}

\begin{abstract}
Consider the scattering of an elastic plane wave by a rigid obstacle, which is
immersed in a homogeneous and isotropic elastic medium in two dimensions.
Based on a Dirichlet-to-Neumann (DtN) operator, an exact transparent boundary
condition is introduced and the scattering problem is formulated as a boundary
value problem of the elastic wave equation in a bounded domain. By developing a
new duality argument, an a posteriori error estimate is derived for the discrete
problem by using the finite element method with the truncated DtN operator. The
a posteriori error estimate consists of the finite element approximation error
and the truncation error of the DtN operator which decays exponentially with
respect to the truncation parameter. An adaptive finite element algorithm is
proposed to solve the elastic obstacle scattering problem, where the truncation
parameter is determined through the truncation error and the mesh elements for
local refinements are chosen through the finite element discretization error.
Numerical experiments are presented to demonstrate the effectiveness of the
proposed method.
\end{abstract}

\maketitle

\section{Introduction}

A basic problem in classical scattering theory is the scattering of
time-harmonic waves by a bounded and impenetrable medium, which is known as the
obstacle scattering problem. It has played a crucial role in diverse
scientific areas such as radar and sonar, geophysical exploration, medical
imaging, and nondestructive testing. Motivated by these significant
applications, the obstacle scattering problem has been widely studied for
acoustic and electromagnetic waves. Consequently, a great deal of results are
available concerning its solution \cite{CK-83, M-03, N-01}. Recently, the
scattering
problems for elastic waves have received ever-increasing attention due to the
important applications in seismology and geophysics \cite{BHSY-jmpa18,
LWWZ-ip16, LY-ipi}. For instance, they are fundamental to detect the fractures
in sedimentary rocks for the production of underground gas and liquids. Compared
with acoustic and electromagnetic waves, elastic waves are less studied due to
the coexistence of compressional waves and shear waves that have different
wavenumbers \cite{C-88, LL-86}. 

The obstacle scattering problem is usually formulated as an exterior boundary
value problem imposed in an open domain. The unbounded physical domain needs to
be truncated into a bounded computational domain for the convenience of
mathematical analysis or numerical computation. Therefore, an appropriate
boundary condition is required on the boundary of the truncated domain to
avoid artificial wave reflection. Such a boundary condition is called the
transparent boundary condition (TBC) or non-reflecting boundary condition. It
is one of the important and active subjects in the research area of wave
propagation \cite{BT-cpam80, EM-mc77, GG-wm03, GK-wm90, GK-jcp95,
MK-jcp04, T-an99}. Since Berenger proposed a perfectly matched layer (PML)
technique to solve the time-dependent Maxwell equations \cite{B-jcp94}, the
research on the PML has undergone a tremendous development due to its
effectiveness and simplicity. Various constructions of PML have been proposed
and studied for a wide range of scattering problems on acoustic and
electromagnetic wave propagation \cite{BW-sinum05, CC-mc08, CW-motl94,
HSZ-sima03, TY-anm98, JL-cicp17}. The basic idea of the PML technique is to
surround the domain of interest by a layer of finite
thickness fictitious medium that attenuates the waves coming from inside of the
computational domain. When the waves reach the outer boundary of the PML region,
their values are so small that the homogeneous Dirichlet boundary conditions can
be imposed. 

A posteriori error estimates are computable quantities which measure the
solution errors of discrete problems. They are essential
in designing algorithms for mesh modification which aim to equidistribute the
computational effort and optimize the computation. The a posteriori
error estimates based adaptive finite element methods have the ability of error
control and asymptotically optimal approximation property \cite{BW-sinum78}.
They have become a class of important numerical tools for solving differential
equations, especially for those where the solutions have singularity or
multiscale phenomena. Combined with the PML technique, an efficient adaptive
finite element method was developed in \cite{CW-sinum03} for solving the
two-dimensional diffraction grating problem, where the medium has a
one-dimensional periodic
structure and the model equation is the two-dimensional Helmholtz equation. It
was shown that the a posteriori error estimate consists of the finite element
discretization error and the PML truncation error which decays exponentially
with respect to the PML parameters such as the thickness of the layer and the
medium properties. Due to the superior numerical performance, the adaptive PML
method was quickly extended to solve the two- and three-dimensional obstacle
scattering problems \cite{CC-mc08, CL-sinum05} and the three-dimensional
diffraction grating problem \cite{BLW-mc10}, where either the two-dimensional
Helmholtz equation or the three-dimensional Maxwell equations were considered.
Although the PML method has been developed to solve various elastic wave
propagation problems in engineering and geophysics soon after it was introduced
\cite{CL-jca96, CT-g01, HSB-jasa96, KT-gji03}, the rigorous mathematical studies
were only recently done for elastic waves because of the complex of the model
equation \cite{BP-mc10, CXZ-mc16, JLLZ-m2na17, JLLZ-cms18}. 

As a viable alternative, the finite element DtN method has been proposed to
solve the obstacle scattering problems \cite{JLLZ-jsc17, JLZ-cicp13} and the
diffraction grating problems \cite{WBLLW-sinum15, JLWWZ-18},
respectively, where the transparent boundary conditions are used to truncate
the domains. In this new approach, the layer of artificial medium is not needed
to enclose the domain of interest, which makes is different from the PML method.
The transparent boundary conditions are based on nonlocal Dirichlet-to-Neumann
(DtN) operators and are given as infinite Fourier series. Since the transparent
boundary conditions are exact, the artificial boundary can be put as close as
possible to the scattering structures, which can reduce the size of the
computational domain. Numerically, the infinite series need to be truncated
into a sum of finitely many terms by choosing an appropriate truncation
parameter $N$. It is known that the convergence of the truncated DtN map 
could be arbitrarily slow to the original DtN map in the operator norm. The a
posteriori error analysis of the PML method cannot be applied directly to the
DtN method since the DtN map of the truncated PML problem converges
exponentially fast to the DtN map of the untruncated PML problem. To overcome
this issue, a duality argument had to be developed to obtain the a posteriori
error estimate between the solution of the scattering problem and the finite
element solution. Comparably, the a posteriori error estimates consists of the
finite element discretization error and the DtN truncation error, which decays
exponentially with respect to the truncation parameter $N$. The numerical
experiments demonstrate that the adaptive DtN method has a competitive behavior
to the adaptive PML method. 

In this paper, we present an adaptive finite element DtN method and carry out
its mathematical analysis for the elastic wave scattering problem. The goal is
threefold: (1) prove the exponential convergence of the truncated DtN operator;
(2) give a complete a posteriori error estimate; (3) develop an effective
adaptive finite element algorithm. This paper significantly extends the work on
the acoustic scattering problem \cite{JLLZ-jsc17}, where the Helmholtz
equation was considered. Apparently, the techniques differ greatly from the
existing work because of the complicated transparent boundary condition
associated with the elastic wave equation. 

Specifically, we consider a rigid obstacle which is immersed in a homogeneous
and isotropic elastic medium in two dimensions. The Helmholtz decomposition is
utilized to formulate the exterior boundary value problem of the elastic wave
equation into a coupled exterior boundary value problem of the Helmholtz
equation. By using a Dirichlet-to-Neumann (DtN) operator, an exact transparent
boundary condition, which is given as a Fourier series, is introduced
to reduce the original scattering problem into a boundary value problem of
the elastic wave equation in a bounded domain. The discrete problem is studied
by using the finite element method with the truncated DtN operator. Based on
the Helmholtz decomposition, a new duality argument is developed to obtain an a
posteriori error estimate between the solution of the original scattering
problem and the discrete problem. The a posteriori error estimate consists of
the finite element approximation error and the truncation error of the DtN
operator which is shown to decay exponentially with respect to the truncation
parameter. The estimate is used to design the adaptive finite element algorithm
to choose elements for refinements and to determine the truncation parameter
$N$. Since the truncation error decays exponentially with respect to $N$, the
choice of the truncation parameter $N$ is not sensitive to the given tolerance. 
Numerical experiments are presented to demonstrate the effectiveness of the
proposed method.

The paper is organized as follows. In Section \ref{Section_pf}, the elastic wave
equation is introduced for the scattering by a rigid obstacle; a boundary value
problem is formulated by using the transparent boundary condition; the
corresponding weak formulation is discussed. In Section \ref{Section_dp}, the
discrete problem is considered by using the finite element approximation with
the truncated DtN operator. Section \ref{Section_pea} is devoted to the a
posteriori error analysis and serves as the basis of the adaptive algorithm. In
Section \ref{Section_ne}, we discuss the numerical implementation of the 
adaptive algorithm and present two numerical examples to illustrate the
performance of the proposed method. The paper is concluded with
some general remarks and directions for future work in Section \ref{Section_c}.

\section{Problem formulation}\label{Section_pf}

Consider a two-dimensional elastically rigid obstacle $D$ with Lipschitz
continuous boundary $\partial D$, as seen in Figure \ref{fig1}. Denote by $\nu$
and $\tau$ the unit normal and tangent vectors on $\partial D$, respectively.
The exterior domain $\mathbb R^2\setminus\overline D$ is assumed to be filled
with a homogeneous and isotropic elastic medium with a unit mass density. Let
$B_R=\{\boldsymbol x=(x, y)^\top\in\mathbb R^2: |\boldsymbol x|<R\}$ and
$B_{\hat R}=\{\boldsymbol x\in\mathbb R^2: |\boldsymbol x|<\hat R\}$ be the
balls with radii $R$ and $\hat R$, respectively, where $R>\hat R>0$. Denote by
$\partial B_R$ and $\partial B_{\hat R}$ the boundaries of $B_R$ and $R_{\hat
R}$, respectively. Let $\hat R$ be large enough such that $\overline D\subset
B_{\hat R}\subset B_R$. Denote by $\Omega=B_R\setminus\overline D$ the bounded
domain where the boundary value problem will be formulated.

\begin{figure}
\centering
\includegraphics[width=0.35\textwidth]{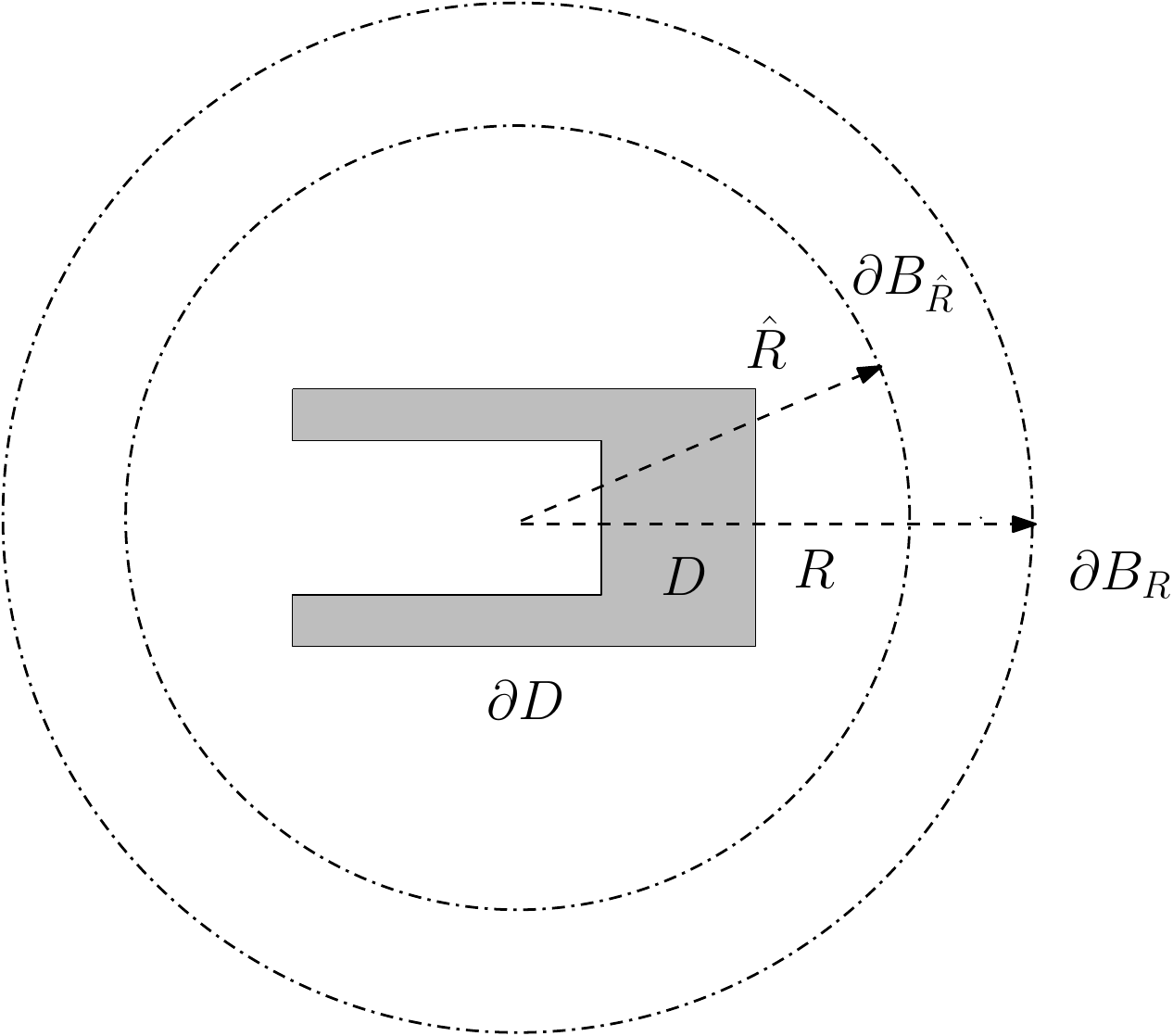}
\caption{Schematic of the elastic wave scattering problem.}
\label{fig1}
\end{figure}

Let the obstacle be illuminated by an incident wave $\boldsymbol u^{\rm inc}$.
The displacement of the scattered field $\boldsymbol u$ satisfies the
two-dimensional elastic wave equation
\begin{equation}\label{Navier-1}
\mu\Delta \boldsymbol{u}+(\lambda+\mu)\nabla\nabla\cdot \boldsymbol{u}
+\omega^2 \boldsymbol{u}=0\quad \text{in}\,\mathbb{R}^2\setminus\overline{D},
\end{equation}
where $\omega>0$ is the angular frequency and $\lambda, \mu$ are the Lam\'{e}
constants satisfying $\mu>0, \lambda+\mu>0$. Since the obstacle is assumed to
be rigid, the displacement of the total field $\boldsymbol
u+\boldsymbol u^{\rm inc}$ vanishes on the boundary of the obstacle, i.e., we
have
\begin{equation}\label{Navier-2}
 \boldsymbol u=\boldsymbol g\quad\text{on}\,\partial D,
\end{equation}
where $\boldsymbol g=-\boldsymbol u^{\rm inc}$. In addition, the scattered
field $\boldsymbol u$ is required to satisfy the Kupradze--Sommerfeld radiation
condition
\begin{equation}\label{Navier-3}
 \lim_{\rho\to\infty}\rho^{1/2}(\partial_\rho\boldsymbol u_{\rm p}-{\rm
i}\kappa_1 \boldsymbol u_{\rm p})=0,\quad
\lim_{\rho\to\infty}\rho^{1/2}(\partial_\rho\boldsymbol u_{\rm s}-{\rm
i}\kappa_2 \boldsymbol u_{\rm s})=0,\quad \rho=|\boldsymbol x|,
\end{equation}
where 
\[
 \boldsymbol u_{\rm p}=-\frac{1}{\kappa_1^2}\nabla\nabla\cdot\boldsymbol
u,\quad \boldsymbol u_{\rm s}=\frac{1}{\kappa_2^2}{\bf curl}{\rm
curl}\boldsymbol u,
\]
are the compressional and shear wave components of $\boldsymbol u$,
respectively. Here 
\[
 \kappa_1=\frac{\omega}{(\lambda+2\mu)^{1/2}},\quad
\kappa_2=\frac{\omega}{\mu^{1/2}}
\]
are knowns as the compressional wavenumber and the shear wavenumber,
respectively. Clearly we have $\kappa_1<\kappa_2$ since $\mu>0,
\lambda+\mu>0$. Given a vector function $\boldsymbol u=(u_1, u_2)^\top$ and a
scalar function $u$, the scalar and vector curl operators are defined by 
\[
 {\rm curl}\boldsymbol u=\partial_x u_2 -\partial_y u_1,\quad {\bf
curl}u=(\partial_y u, -\partial_x u)^\top.
\]

For any solution $\boldsymbol u$ of \eqref{Navier-1}, we introduce the
Helmholtz decomposition
\begin{equation}\label{hd}
 \boldsymbol u=\nabla\phi+{\bf curl}\psi,
\end{equation}
where $\phi, \psi$ are called the scalar potential functions. Substituting
\eqref{hd} into \eqref{Navier-1} yields that $\phi, \psi$ satisfy the Helmholtz
equation
\begin{equation}\label{he}
 \Delta \phi+\kappa_1^2\phi=0,\quad \Delta\psi+\kappa_2^2\psi=0\quad\text{in}\,
\mathbb R^2\setminus\overline D. 
\end{equation}
Taking the dot product of \eqref{Navier-2} with $\nu$ and $\tau$, respectively,
we get 
\begin{equation}\label{bc}
\partial_{\nu} \phi-\partial_{\tau}\psi=f_1,
\quad \partial_{\nu}\psi+\partial_{\tau}\phi=f_2  \quad\text{on}\,\partial D,
\end{equation}
where $f_1=-\boldsymbol g\cdot\nu$ and $f_2=\boldsymbol g\cdot\tau$. It follows
from \eqref{Navier-3} that $\phi, \psi$ satisfies the Sommerfeld radiation
condition
\begin{equation}\label{rc}
 \lim_{\rho\to\infty}\rho^{1/2}(\partial_\rho\phi-\kappa_1\phi)=0,\quad
\lim_{\rho\to\infty}\rho^{1/2}(\partial_\rho\psi-\kappa_2\psi)=0.
\end{equation}

Based on the Helmholtz decomposition, it is easy to show the equivalence
of the boundary value problems \eqref{Navier-1}--\eqref{Navier-3} and
\eqref{he}--\eqref{rc}. The details are omitted for brevity. 

\begin{lemma}
 Let $\boldsymbol u$ be the solution of the boundary value problem
\eqref{Navier-1}--\eqref{Navier-3}. Then
$\phi=-\kappa_1^{-2}\nabla\cdot\boldsymbol u, \psi=\kappa_2^{-1}{\rm
curl}\boldsymbol u$ are the solutions of the coupled boundary value problem
\eqref{he}--\eqref{rc}. Conversely, if $\phi, \psi$ are the solution of the
boundary value problem \eqref{he}--\eqref{rc}, then $\boldsymbol
u=\nabla\phi+{\bf curl}\psi$ is the solution of the boundary value problem
\eqref{Navier-1}--\eqref{Navier-3}.
\end{lemma}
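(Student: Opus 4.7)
The plan is to treat the two implications separately, using the vector Laplacian identity $\Delta\boldsymbol u = \nabla(\nabla\cdot\boldsymbol u)-{\bf curl}({\rm curl}\boldsymbol u)$ together with the differential identities $\nabla\cdot{\bf curl}\psi=0$ and ${\rm curl}\,\nabla\phi=0$ as the key algebraic tools, and then verifying the boundary and radiation conditions separately.

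\emph{Forward direction.} Starting from \eqref{Navier-1}, I substitute the vector Laplacian identity to rewrite the Navier equation as $(\lambda+2\mu)\nabla(\nabla\cdot\boldsymbol u)-\mu\,{\bf curl}({\rm curl}\boldsymbol u)+\omega^2\boldsymbol u=0$. Solving this for $\boldsymbol u$ and dividing by $\omega^2$ gives the pointwise identity $\boldsymbol u = -\kappa_1^{-2}\nabla(\nabla\cdot\boldsymbol u)+\kappa_2^{-2}{\bf curl}({\rm curl}\boldsymbol u)$, which is precisely $\boldsymbol u=\nabla\phi+{\bf curl}\psi$ with the stated $\phi,\psi$. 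Applying the divergence operator to \eqref{Navier-1} (using $\nabla\cdot\Delta=\Delta\nabla\cdot$ and $\nabla\cdot\nabla\nabla\cdot=\Delta\nabla\cdot$) produces $(\lambda+2\mu)\Delta(\nabla\cdot\boldsymbol u)+\omega^2\nabla\cdot\boldsymbol u=0$, hence $\Delta\phi+\kappa_1^2\phi=0$. Applying the scalar curl, and using ${\rm curl}\,\nabla=0$, yields $\mu\Delta({\rm curl}\boldsymbol u)+\omega^2{\rm curl}\boldsymbol u=0$, hence $\Delta\psi+\kappa_2^2\psi=0$. The boundary conditions \eqref{bc} follow by taking the normal and tangential components of $\boldsymbol u|_{\partial D}=\boldsymbol g$ after substituting $\boldsymbol u=\nabla\phi+{\bf curl}\psi$ and invoking the orientation identities ${\bf curl}\psi\cdot\nu=-\partial_\tau\psi$ and ${\bf curl}\psi\cdot\tau=\partial_\nu\psi$.

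\emph{Reverse direction.} Given $\phi,\psi$ solving \eqref{he}--\eqref{rc}, I set $\boldsymbol u=\nabla\phi+{\bf curl}\psi$ and verify \eqref{Navier-1} by direct computation. Because $\nabla\cdot{\bf curl}\psi=0$, we have $\nabla\cdot\boldsymbol u=\Delta\phi=-\kappa_1^2\phi$. Using the commutation relations $\Delta\nabla\phi=\nabla\Delta\phi$ and $\Delta{\bf curl}\psi={\bf curl}\Delta\psi$ together with the scalar Helmholtz equations, the Navier operator reduces to $\bigl(-(\lambda+2\mu)\kappa_1^2+\omega^2\bigr)\nabla\phi+(-\mu\kappa_2^2+\omega^2){\bf curl}\psi$, and both coefficients vanish by the definitions of $\kappa_1,\kappa_2$. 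The Dirichlet datum \eqref{Navier-2} is then recovered by inverting the $2\times 2$ linear system \eqref{bc} for the components $(\boldsymbol u\cdot\nu,\boldsymbol u\cdot\tau)$.

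The remaining issue, in both directions, is the matching of the radiation conditions. Identifying $\nabla\phi$ and ${\bf curl}\psi$ (after the appropriate $\kappa_j^{-2}$ rescaling) with the compressional and shear components $\boldsymbol u_{\rm p},\boldsymbol u_{\rm s}$ defined in Section~\ref{Section_pf}, the Sommerfeld condition \eqref{rc} on the scalars $\phi,\psi$ transfers to the Kupradze--Sommerfeld condition \eqref{Navier-3} via the large-$\rho$ expansions $\phi\sim e^{{\rm i}\kappa_1\rho}/\sqrt{\rho}$ and $\psi\sim e^{{\rm i}\kappa_2\rho}/\sqrt{\rho}$ and their derivatives, which imply $\partial_\rho\nabla\phi-{\rm i}\kappa_1\nabla\phi = o(\rho^{-1/2})$ and analogously for $\psi$. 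This radiation-condition step is the main obstacle, since it requires converting a one-dimensional decay estimate on a scalar into a vector-valued estimate while controlling the non-radial components that do not appear in \eqref{rc}; the remaining portions of the argument are purely algebraic manipulations of standard vector identities and are omitted here, consistent with the paper's remark that the details are left to the reader.
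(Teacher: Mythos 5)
The paper does not actually prove this lemma---it says the equivalence ``is easy to show'' and that ``the details are omitted for brevity''---so there is no in-paper argument to compare against, and your proposal has to stand on its own. The algebraic skeleton you give is the standard one and is correct: the identity $\Delta\boldsymbol u=\nabla(\nabla\cdot\boldsymbol u)-{\bf curl}({\rm curl}\,\boldsymbol u)$ turns \eqref{Navier-1} into $(\lambda+2\mu)\nabla(\nabla\cdot\boldsymbol u)-\mu\,{\bf curl}({\rm curl}\,\boldsymbol u)+\omega^2\boldsymbol u=0$, dividing by $\omega^2$ produces the decomposition, and applying $\nabla\cdot$ and ${\rm curl}$ gives the two Helmholtz equations; the converse computation and the boundary conditions are as you describe. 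Note in passing that your own computation yields $\psi=\kappa_2^{-2}{\rm curl}\,\boldsymbol u$, not the $\kappa_2^{-1}$ printed in the lemma; the exponent in the statement is evidently a typo, and you should say so explicitly rather than claim your formula matches ``the stated $\phi,\psi$.''

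The one genuine gap is the forward half of the radiation-condition equivalence, which you correctly identify as the nontrivial step but then dispose of with an argument that only runs in the other direction. Writing $\phi\sim e^{{\rm i}\kappa_1\rho}/\sqrt{\rho}$ and differentiating is legitimate once you already know $\phi$ is a radiating solution of the Helmholtz equation (this is the Atkinson--Wilcox expansion, and it does prove: $\phi,\psi$ satisfying \eqref{rc} implies $\boldsymbol u_{\rm p}=\nabla\phi$, $\boldsymbol u_{\rm s}={\bf curl}\,\psi$ satisfy \eqref{Navier-3}). In the forward direction, however, the hypothesis is \eqref{Navier-3} on the vector fields $\boldsymbol u_{\rm p},\boldsymbol u_{\rm s}$, and what must be shown is that the scalars $\phi,\psi$ satisfy \eqref{rc}; invoking the outgoing expansion of $\phi$ at that point assumes the conclusion. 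A complete argument expands $\phi$ on circles $|\boldsymbol x|=\rho$ as $\sum_n\bigl(a_nH_n^{(1)}(\kappa_1\rho)+b_nH_n^{(2)}(\kappa_1\rho)\bigr)e^{{\rm i}n\theta}$ and uses the decay forced by $\partial_\rho\boldsymbol u_{\rm p}-{\rm i}\kappa_1\boldsymbol u_{\rm p}=o(\rho^{-1/2})$ (for instance its radial component, which controls $\partial_\rho^2\phi-{\rm i}\kappa_1\partial_\rho\phi$) together with the large-argument asymptotics of $H_n^{(2)}$ to conclude $b_n=0$; alternatively one can run a Rellich-type energy identity on large circles. Either way, this step has to be carried out, not merely flagged, and as written your proof of the forward implication is incomplete at exactly the point you identify as the main obstacle.
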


Denote by $L^2(\Omega)$ the usual Hilbert space of square integrable
functions. Let $H^1(\Omega)$ be the standard Sobolev space equipped with the
norm
\[
 \| u\|_{H^1(\Omega)}=\Big(\|u\|^2_{L^2(\Omega)}+\|\nabla u\|^2_{L^2(\Omega)}
\Big)^{1/2}.
\]
Define $H^1_{\partial D}(\Omega)=\{u\in H^1(\Omega): u=0 \text{ on }\partial
D\}$. For any function $u\in L^2(\partial B_R)$, it admits the Fourier series
expansion
\[
 u(R, \theta)=\sum_{n\in\mathbb Z}\hat u_n(R) e^{{\rm i}n\theta},\quad \hat
u_n(R)=\frac{1}{2\pi}\int_0^{2\pi}u(R,\theta) e^{-{\rm i}n\theta}{\rm d}\theta.
\]
The trace space $H^s(\partial B_R), s\in\mathbb R$ is defined by 
\[
 H^s(\partial B_R)=\{u\in L^2(\partial B_R): \|u\|_{H^s(\partial B_R)}<\infty\},
\]
where $H^s(\partial B_R)$ norm is given by 
\[
 \|u\|_{H^s(\partial B_R)}=\Big(2\pi\sum_{n\in\mathbb Z}(1+n^2)^s|\hat u_n(R)|^2
\Big)^{1/2}.
\]
Let $\boldsymbol H^1(\Omega)=H^1(\Omega)^2$ and $\boldsymbol H^1_{\partial
D}(\Omega)=H^1_{\partial D}(\Omega)^2$ be the
Cartesian product spaces equipped with the corresponding 2-norms of
$H^1(\Omega)$ and $H^1_{\partial D}(\Omega)$, respectively. Throughout the
paper, we take the notation of $a\lesssim b$ to stand for $a\leq C b$, where
$C$ is a positive constant whose value is not required but should be clear from
the context. 

The elastic wave scattering problem \eqref{Navier-2}--\eqref{Navier-3} is
formulated in the open domain $\mathbb R^2\setminus\overline D$, which needs to
be truncated into the bounded domain $\Omega$. An appropriate boundary
condition is required on $\partial B_R$. 

Define a boundary operator for the displacement of the scattered wave
\[
 \mathscr B\boldsymbol u=\mu\partial_r \boldsymbol
u+(\lambda+\mu)\nabla\cdot\boldsymbol u \boldsymbol e_r\quad\text{on}\,\partial
B_R,
\]
where $\boldsymbol e_r$ is the unit outward normal vector on $\partial B_R$. It
is shown in \cite{LWWZ-ip16} that the scattered field $\boldsymbol u$ satisfies
the transparent boundary condition on $\partial B_R$:  
\begin{equation}\label{tbc}
\mathscr B\boldsymbol u=(\mathscr T\boldsymbol u)(R, \theta):=\sum_{n\in\mathbb
Z}M_n\boldsymbol u_n(R) e^{{\rm i}n\theta},\quad  \boldsymbol u(R,
\theta)=\sum_{n\in\mathbb Z}\boldsymbol u_n(R) e^{{\rm
i}n\theta}, 
\end{equation}
where $\mathscr T$ is called the Dirichlet-to-Neumann (DtN) operator and $M_n$
is a $2\times 2$ matrix whose entries are given in Appendix A. 

Based on the transparent boundary condition \eqref{tbc}, the
variational problem for \eqref{Navier-1}--\eqref{Navier-3} is to find
$\boldsymbol{u}\in \boldsymbol{H}^{1}(\Omega)$
with $\boldsymbol{u}=\boldsymbol{g}$ on $\partial D$ such that 
\begin{equation}\label{Variational_1}
b(\boldsymbol{u}, \boldsymbol{v})=0,\quad \forall \boldsymbol{v}\in
\boldsymbol{H}^{1}_{\partial D}(\Omega),
\end{equation}
where the sesquilinear form $b: \boldsymbol H^1(\Omega)\times\boldsymbol
H^1(\Omega)\to\mathbb C$ is defined as
\begin{align}\label{Bilinear_1}
b(\boldsymbol{u},
\boldsymbol{v})=\mu\int_{\Omega}\nabla\boldsymbol{u}:\nabla\overline{\boldsymbol
{v}}{\rm d}\boldsymbol{x}
+(\lambda+\mu)\int_{\Omega}\left(\nabla\cdot\boldsymbol{u}
\right)\left(\nabla\cdot\overline{\boldsymbol{v}}\right){\rm
d}\boldsymbol{x}\notag\\
-\omega^2 \int_{\Omega}  \boldsymbol{u}\cdot
\overline{\boldsymbol{v}} {\rm d}\boldsymbol{x}-
\int_{\partial
B_R}\mathscr{T}\boldsymbol{u}\cdot\overline{\boldsymbol{v}}{\rm d}s.
\end{align}
Here $A:B={\rm tr}(AB^\top)$ is the Frobenius inner product of square matrices
$A$ and $B$. 

Following \cite{LWWZ-ip16}, we may show that the variational
problem \eqref{Variational_1} has a unique weak solution $\boldsymbol u\in
\boldsymbol H^1(\Omega)$ for any frequency $\omega$ and the solution satisfies
the estimate 
\begin{equation}\label{vp_se}
 \|\boldsymbol u\|_{\boldsymbol H^1(\Omega)}\lesssim \|\boldsymbol
g\|_{\boldsymbol H^{1/2}(\partial D)}\lesssim \|\boldsymbol u^{\rm
inc}\|_{\boldsymbol H^1(\Omega)}. 
\end{equation}
It follows from the general theory in \cite{BA-73} that there exists a constant
$\gamma>0$ such that the following inf-sup condition holds
\[
 \sup_{0\neq\boldsymbol v\in\boldsymbol H^1(\Omega)}\frac{|b(\boldsymbol u,
\boldsymbol v)|}{\|\boldsymbol v\|_{\boldsymbol H^1(\Omega)}}\geq\gamma
\|\boldsymbol u\|_{\boldsymbol H^1(\Omega)},\quad\forall \boldsymbol
u\in\boldsymbol H^1(\Omega).
\]

\section{The discrete problem}\label{Section_dp}

Let us consider the discrete problem of \eqref{Variational_1} by using the
finite element approximation. Let $\mathcal M_h$ be a regular triangulation of
$\Omega$, where $h$ denotes the maximum diameter of all the elements in
$\mathcal M_h$. For simplicity, we assume that the boundary $\partial D$ is
polygonal and ignore the approximation error of the boundary $\partial B_R$,
which allows to focus of deducing the a posteriori error estimate. Thus any edge
$e\in\mathcal M_h$ is a subset of $\partial \Omega$ if it has two boundary
vertices.

Let $\boldsymbol V_h\subset \boldsymbol{H}^{1}(\Omega)$ be a conforming finite
element space, i.e., 
\[
\boldsymbol V_h:= \left\{\boldsymbol v\in  C(\overline\Omega)^2:
\boldsymbol v\big|_{K}\in P_m(K)^2 \text{ for any } K\in\mathcal
M_h\right\},
\]
where $m$ is a positive integer and $P_m(K)$ denotes the set of all
polynomials of degree no more than $m$. The finite element approximation to the
variational problem \eqref{Variational_1} is to find $\boldsymbol{u}^h\in
\boldsymbol V_h$ with $\boldsymbol u^h=\boldsymbol g$ on $\partial D$ such that
\begin{equation}\label{Variational_3}
b(\boldsymbol{u}^h, \boldsymbol{v}^h)=0,\quad \forall
\boldsymbol{v}^h\in \boldsymbol V_{h,\partial D},
\end{equation}
where $\boldsymbol V_{h,\partial D}=\{\boldsymbol v\in \boldsymbol V_h:
\boldsymbol v=0\text{ on }\partial D\}$.

In the variational problem \eqref{Variational_3}, the DtN operator $\mathscr T$
is given by an infinite series. In practical computation, the infinite series
must be truncated into a finite sum. Given a sufficiently large $N$, we define
the truncated DtN operator
\begin{equation}\label{Truncated_TBC}
\mathscr{T}_N \boldsymbol{u}=\sum\limits_{|n|\leq N}M_n\boldsymbol u_n(R)
e^{{\rm i}n\theta}.
\end{equation} 
Using \eqref{Truncated_TBC}, we have the truncated finite element
approximation: Find $\boldsymbol u_N^h\in\boldsymbol V_h$ with $\boldsymbol
u_N^h=\boldsymbol g$ on $\partial D$ such that
\begin{equation}\label{Variational_2}
b_N(\boldsymbol{u}^h_N, \boldsymbol{v}^h)=0,\quad \forall \boldsymbol{v}^h\in
\boldsymbol V_{h, \partial D},
\end{equation}
where the sesquilinear form $b_N:\boldsymbol V_h\times\boldsymbol V_h\to\mathbb
C$ is defined as
\begin{align}\label{Bilinear_2}
b_N(\boldsymbol{u},
\boldsymbol{v})=\mu\int_{\Omega}\nabla\boldsymbol{u}:\nabla\overline{
\boldsymbol{v}}{\rm d}\boldsymbol{x}
+(\lambda+\mu)\int_{\Omega}\left(\nabla\cdot\boldsymbol{u}
\right)\left(\nabla\cdot\overline{\boldsymbol{v}}\right){\rm d}\boldsymbol{x}
\notag\\
-\omega^2 \int_{\Omega} 
\boldsymbol{u}\cdot \overline{\boldsymbol{v}}{\rm d}\boldsymbol{x}
-\int_{\partial
B_R}\mathscr{T}_N\boldsymbol{u}\cdot\overline{\boldsymbol{v}}{\rm d}s.
\end{align}

For sufficiently large $N$ and sufficiently small $h$, the discrete inf-sup
condition of the sesquilinear form $b_N$ may be established by following the
approach in \cite{S-mc74}. Based on the general theory in \cite{BA-73}, the
truncated variational problem \eqref{Variational_2} can be shown to have a
unique solution $\boldsymbol u_N^h\in\boldsymbol V_h$. The details are omitted
since our focus is the a posteriori error estimate.

\section{The a posterior error analysis}\label{Section_pea}

For any triangular element $K\in\mathcal M_h$, denoted by $h_K$ its diameter.
Let $\mathcal B_h$ denote the set of all the edges of $K$. For any edge
$e\in\mathcal B_h$, denoted by $h_e$ its length. For any interior edge $e$ which
is the common side of triangular elements $K_1, K_2\in \mathcal M_h$, we
define the jump residual across $e$ as
\[
J_e=-\left[\mu\nabla \boldsymbol{u}_N^h\cdot\boldsymbol{\nu}
_1+(\lambda+\mu)\nabla(\nabla\cdot\boldsymbol{u}_N^h)
\cdot\boldsymbol{\nu}_1+\mu\nabla
\boldsymbol{u}_N^h\cdot \boldsymbol{\nu}_2+(\lambda+\mu)		
\nabla(\nabla\cdot\boldsymbol{u}_N^h)\cdot\boldsymbol{\nu}_2\right],
\]
where $\boldsymbol{\nu}_j$ is the unit outward normal vector on the boundary of
$K_j, j=1,2$. For any boundary edge $e\subset \partial B_R$, we define the jump
residual
\[	
J_e=2\left(\mathscr{T}_N\boldsymbol{u}-\mu(\nabla\boldsymbol{u}
_N^h\cdot\boldsymbol{e}_r)-(\lambda+\mu)\nabla(\nabla\cdot\boldsymbol
{u} _N^h)\boldsymbol{e}_r\right).
\]
For any triangular element $K\in\mathcal M_h$, denote by $\eta_K$ the local
error estimator which is given by
\[
\eta_K=h_K\|\mathscr R\boldsymbol{u}_N^h\|_{\boldsymbol{L}^2(K)}
+\left(\frac{1}{2}\sum\limits_{e\in \partial K} h_e
\|J_e\|^2_{\boldsymbol{L}^2(e)}\right)^{1/2},
\]
where $\mathscr R$ is the residual operator defined by 
\[
\mathscr R\boldsymbol{u}=\mu\Delta\boldsymbol{u}
+(\lambda+\mu)\nabla\left(\nabla\cdot\boldsymbol{u}\right)+\omega^2\boldsymbol{u
}.
\]

For convenience, we introduce a weighted norm
$\vvvert\cdot\vvvert_{\boldsymbol{H}^{1}(\Omega)}$ which is given by 
\begin{equation}\label{eqn}
\vvvert\boldsymbol{u}\vvvert^2_{\boldsymbol{H}^{1}(\Omega)}
=\mu\int_{\Omega} 
|\nabla\boldsymbol{u}|^2{\rm d}\boldsymbol{x}
+(\lambda+\mu)\int_{\Omega}|\nabla\cdot\boldsymbol{u}|^2{\rm d}\boldsymbol{x}
+\omega^2 \int_{\Omega} |\boldsymbol{u}|^2{\rm d}\boldsymbol{x}.
\end{equation} 
It can be verified for any $\boldsymbol u\in \boldsymbol H^1(\Omega)$ that
\begin{equation}\label{Equal_norm}
\min \left(\mu, \omega^2\right)
\|\boldsymbol{u}\|^2_{\boldsymbol H^{1}(\Omega)}
\leq \vvvert\boldsymbol{u}\vvvert^2_{\boldsymbol H^1(\Omega)}
\leq \max\left(2\lambda+3\mu,
\omega^2\right)\|\boldsymbol{u}\|^2_{\boldsymbol H^{1}(\Omega)},
\end{equation}
which implies that the two norms $\|\cdot\|_{\boldsymbol H^1(\Omega)}$ and
$\vvvert\cdot\vvvert_{\boldsymbol H^1(\Omega)}$ are equivalent.

Now we state the main result of this paper. 

\begin{theorem}\label{Main_Result}
Let $\boldsymbol{u}$ and $\boldsymbol{u}_N^h$ be the solution of the variational
problem \eqref{Variational_1} and \eqref{Variational_3}, respectively. Then for
sufficiently large $N$, the following a posterior error estimate holds 
\begin{eqnarray*}
\vvvert\boldsymbol{u}-\boldsymbol{u}_N^h
\vvvert^2_{\boldsymbol{H}^{1}(\Omega)}\lesssim 
\left(\sum_{K\in\mathcal M_h} \eta^2_{K}\right)^{1/2}
+\max_{|n|>N}\left(|n|\bigg(\frac{\hat{R}}{R}\bigg)^{|n|}\right)
\|\boldsymbol{u}^{\rm inc}\|_{\boldsymbol{H}^{1}(\Omega)}.
\end{eqnarray*}
\end{theorem}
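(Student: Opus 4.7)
The plan is to derive a Galerkin-type error equation, bound the interior residual by standard a posteriori techniques, and tame the DtN truncation term by a duality argument that exploits the exterior analyticity furnished by the Helmholtz decomposition. First I would combine the continuous Galerkin identity $b(\boldsymbol u,\boldsymbol v)=0$ with $b_N(\boldsymbol u_N^h,\boldsymbol v^h)=0$ to obtain, for any $\boldsymbol v \in \boldsymbol H^1_{\partial D}(\Omega)$ and any interpolant $\boldsymbol v^h \in \boldsymbol V_{h,\partial D}$,
\[
b(\boldsymbol u - \boldsymbol u_N^h, \boldsymbol v) = -b_N(\boldsymbol u_N^h, \boldsymbol v - \boldsymbol v^h) + \int_{\partial B_R}(\mathscr T - \mathscr T_N)\boldsymbol u_N^h \cdot \overline{\boldsymbol v}\, ds,
\]
using that $b - b_N$ reduces exactly to the boundary term involving $\mathscr T - \mathscr T_N$. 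By the inf-sup condition for $b$ recorded at the end of Section~\ref{Section_pf} together with the norm equivalence \eqref{Equal_norm}, it then suffices to bound each right-hand term by the claimed estimator times $\|\boldsymbol v\|_{\boldsymbol H^1(\Omega)}$.

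Second, for the residual part $-b_N(\boldsymbol u_N^h,\boldsymbol v - \boldsymbol v^h)$ I would choose $\boldsymbol v^h$ to be a Cl\'ement-type interpolant and perform element-wise integration by parts. Interior volume contributions become $\int_K \mathscr R\boldsymbol u_N^h \cdot \overline{(\boldsymbol v - \boldsymbol v^h)}$; interior face contributions yield the jump residuals $J_e$; and on $\partial B_R$ the flux combines with $\mathscr T_N \boldsymbol u_N^h$ to produce the prescribed boundary $J_e$. Applying Cauchy--Schwarz together with the standard Cl\'ement bounds $\|\boldsymbol v - \boldsymbol v^h\|_{\boldsymbol L^2(K)}\lesssim h_K\|\boldsymbol v\|_{\boldsymbol H^1(\tilde K)}$ and $\|\boldsymbol v - \boldsymbol v^h\|_{\boldsymbol L^2(e)}\lesssim h_e^{1/2}\|\boldsymbol v\|_{\boldsymbol H^1(\tilde e)}$ then delivers the $\bigl(\sum_K \eta_K^2\bigr)^{1/2}\|\boldsymbol v\|_{\boldsymbol H^1(\Omega)}$ bound. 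This step is essentially routine.

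Third, the main obstacle is the boundary term $I := \int_{\partial B_R}(\mathscr T - \mathscr T_N)\boldsymbol u_N^h \cdot \overline{\boldsymbol v}\,ds$. A direct norm estimate of $(\mathscr T - \mathscr T_N)\boldsymbol u_N^h$ fails because the Fourier modes of the discrete function $\boldsymbol u_N^h$ on $\partial B_R$ do not decay in $n$. I would split $\boldsymbol u_N^h = \boldsymbol u - (\boldsymbol u - \boldsymbol u_N^h)$ and treat the two pieces differently. For the $\boldsymbol u$ piece, apply the Helmholtz decomposition $\boldsymbol u = \nabla\phi + \mathbf{curl}\,\psi$: since $\phi,\psi$ satisfy homogeneous Helmholtz equations in $\mathbb R^2\setminus\overline D$ and $\overline D \subset B_{\hat R}$, their expansions in outgoing Hankel functions imply that their Fourier coefficients on $\partial B_R$ are smaller than those on $\partial B_{\hat R}$ by factors comparable to $(\hat R/R)^{|n|}$. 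Since the entries of $M_n$ grow at most linearly in $|n|$, and the trace on $\partial B_{\hat R}$ is controlled by $\|\boldsymbol u^{\rm inc}\|_{\boldsymbol H^1(\Omega)}$ through \eqref{vp_se}, this piece produces exactly the asserted $\max_{|n|>N}|n|(\hat R/R)^{|n|}\|\boldsymbol u^{\rm inc}\|_{\boldsymbol H^1(\Omega)}\|\boldsymbol v\|_{\boldsymbol H^1(\Omega)}$ factor.

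The hard part will be the remaining piece $\int_{\partial B_R}(\mathscr T - \mathscr T_N)(\boldsymbol u - \boldsymbol u_N^h)\cdot \overline{\boldsymbol v}\,ds$, in which the error itself appears in the trace. Here I would invoke the duality argument advertised in the introduction: by the near-symmetry of the matrices $M_n$, transfer the DtN difference onto a dual solution associated with the adjoint elastic exterior problem driven by $\boldsymbol v$; applying the Helmholtz decomposition to that dual solution and writing its exterior representation in outgoing Hankel functions supplies the same $(\hat R/R)^{|n|}$ decay of Fourier coefficients on $\partial B_R$ relative to $\partial B_{\hat R}$. The error trace is thus paired with a \emph{small} operator, yielding a bound of the form $C\,\max_{|n|>N}|n|(\hat R/R)^{|n|}\,\vvvert\boldsymbol u - \boldsymbol u_N^h\vvvert_{\boldsymbol H^1(\Omega)}\|\boldsymbol v\|_{\boldsymbol H^1(\Omega)}$, which for $N$ sufficiently large is absorbable into the left-hand side. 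The delicate points will be checking that the adjoint exterior problem admits an analogous Helmholtz decomposition with Hankel expansions valid in $\mathbb R^2\setminus\overline{B_{\hat R}}$, and that the absorption constant is independent of $N$ and $h$ so that the final estimate retains the stated form.
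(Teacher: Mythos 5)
Your first two steps (the Galerkin error identity and the elementwise residual bound via a Scott--Zhang/Cl\'ement interpolant) are correct and coincide with the paper's Lemma \ref{lemma_xi_3}, as does your treatment of the piece $\int_{\partial B_R}(\mathscr T-\mathscr T_N)\boldsymbol u\cdot\overline{\boldsymbol v}\,ds$ involving the exact solution, which is the content of Lemmas \ref{lemmaHankel} and \ref{lemma_xi_5}. The gap is in your final absorption step. To close the argument through the inf-sup condition you must bound $\int_{\partial B_R}(\mathscr T-\mathscr T_N)\boldsymbol\xi\cdot\overline{\boldsymbol v}\,ds$ by $\delta_N\|\boldsymbol\xi\|_{\boldsymbol H^1(\Omega)}\|\boldsymbol v\|_{\boldsymbol H^1(\Omega)}$ with $\delta_N\to 0$ \emph{uniformly over all test functions} $\boldsymbol v\in\boldsymbol H^1_{\partial D}(\Omega)$. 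In Fourier modes this term is $2\pi R\sum_{|n|>N}(M_n\boldsymbol\xi_n)\cdot\overline{\boldsymbol v_n}$ with $|M_n|\sim|n|$; since neither the finite element error $\boldsymbol\xi$ nor a generic $\boldsymbol v$ has any decay of its high Fourier modes on $\partial B_R$, the best available bound is $C\,\|\boldsymbol\xi\|_{\boldsymbol H^{1/2}(\partial B_R)}\|\boldsymbol v\|_{\boldsymbol H^{1/2}(\partial B_R)}$ with $C$ independent of $N$ --- this is precisely the statement that $\mathscr T_N\to\mathscr T$ arbitrarily slowly in the operator norm, the obstruction the whole paper is designed to circumvent. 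There is no ``dual solution driven by $\boldsymbol v$'' available in the inf-sup framework: $\boldsymbol v$ is a free test function, and the near-symmetry of $M_n$ only moves the factor $|n|$ from one argument to the other.

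The paper escapes in two different ways, neither of which is the one you describe. First, it replaces the inf-sup condition by the energy identity \eqref{LemmaXi_1} for the weighted norm $\vvvert\cdot\vvvert_{\boldsymbol H^1(\Omega)}$, in which the troublesome term appears only in the combination $b(\boldsymbol\xi,\boldsymbol\xi)+\int_{\partial B_R}(\mathscr T-\mathscr T_N)\boldsymbol\xi\cdot\overline{\boldsymbol\xi}\,ds$, which telescopes so that only $(\mathscr T-\mathscr T_N)\boldsymbol u$ survives (Lemma \ref{lemma_xi_3}). This leaves the lower-order terms $2\omega^2\|\boldsymbol\xi\|^2_{\boldsymbol L^2(\Omega)}$ and $\Re\int_{\partial B_R}\mathscr T_N\boldsymbol\xi\cdot\overline{\boldsymbol\xi}\,ds$ (the latter handled by the negative definiteness of $\hat M_n$ for large $|n|$ in Lemma \ref{lemma_xi_6}), and it is only for the $\boldsymbol L^2$ norm of $\boldsymbol\xi$ that a duality argument is used: the dual problem \eqref{dp} with source $\boldsymbol\xi$ is solved semi-explicitly in Fourier modes on the annulus (Lemmas \ref{Lemma_Xi_System} and \ref{Thm_Xi}, Theorem \ref{Solu_p}), and because that dual solution is driven by a volumetric source reaching all the way to $\partial B_R$, its high modes gain only a factor $1/|n|$, so the resulting bound is $N^{-1}\|\boldsymbol\xi\|^2_{\boldsymbol H^1(\Omega)}$ rather than the exponential factor you claim. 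That is still absorbable for $N$ large, but it requires the full chain of estimates above, none of which appears in your outline.
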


We point out that the a posteriori error estimate consists of two parts: the
first part arises from the finite element discretization error; the second part
comes from the truncation error of the DtN operator. Apparently, the DtN
truncation error decreases exponentially with respect to $N$ since $\hat{R}<R$. 

In the rest of the paper, we shall prove the a posteriori error estimate in Theorem
\ref{Main_Result}. First, we present the result on trace regularity for functions in $H^1(\Omega)$. The proof can be found in \cite{JLLZ-jsc17}. 

\begin{lemma}\label{Poincare}
For any $u\in H^{1}(\Omega)$, the following estimates hold
\[
\|u\|_{H^{1/2}(\partial B_R)}\lesssim \|u\|_{H^{1}(\Omega)},
\quad \|u\|_{H^{1/2}(\partial B_{\hat{R}})}\lesssim
\|u\|_{H^{1}(\Omega)}.
\]
\end{lemma}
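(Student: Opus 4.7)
The plan is to deduce both estimates from the classical trace theorem $H^1(\Omega')\to H^{1/2}(\partial\Omega')$ applied to suitably chosen Lipschitz subdomains $\Omega'\subset\Omega$.

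For the first inequality, observe that $\partial B_R$ coincides with the outer component of $\partial\Omega$. The classical trace theorem applied directly to the Lipschitz domain $\Omega$ yields $\|u\|_{H^{1/2}(\partial\Omega)}\lesssim\|u\|_{H^1(\Omega)}$, and since restriction to a connected component of $\partial\Omega$ is a bounded map in $H^{1/2}$, we obtain $\|u\|_{H^{1/2}(\partial B_R)}\lesssim\|u\|_{H^1(\Omega)}$.

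For the second inequality, the circle $\partial B_{\hat R}$ lies strictly in the interior of $\Omega$ because $\overline D\subset B_{\hat R}$ and $\hat R<R$. I would localize to the annular Lipschitz subdomain $A:=B_R\setminus\overline{B_{\hat R}}$, which is contained in $\Omega$ and contains $\partial B_{\hat R}$ in its boundary. Applying the trace theorem on $A$ gives
\[
\|u\|_{H^{1/2}(\partial B_{\hat R})}\leq\|u\|_{H^{1/2}(\partial A)}\lesssim\|u\|_{H^1(A)}\leq\|u\|_{H^1(\Omega)},
\]
which is the stated bound. An alternative route, more in keeping with the Fourier-series framework used elsewhere in the paper, would be to pass to polar coordinates, expand $u(r,\theta)=\sum_n \hat u_n(r)e^{{\rm i}n\theta}$, and combine the one-dimensional Sobolev trace estimate $|\hat u_n(\hat R)|^2\lesssim\int_{\hat R}^{R}\bigl(|\hat u_n'(r)|^2+|\hat u_n(r)|^2\bigr)\,r\,dr$ with Parseval; the $(1+n^2)^{1/2}$ weight in the $H^{1/2}$ norm on the circle is then supplied by interpolating with an analogous bound in which the tangential derivative $\partial_\theta u$ contributes the required $|n|^2|\hat u_n|^2$ factors.

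No step here is a genuine obstacle: the statement is a standard trace inequality, which is why the paper defers to \cite{JLLZ-jsc17}. The only point requiring attention is the \emph{interior} case $\partial B_{\hat R}$, where one must first localize to a Lipschitz annulus whose boundary actually contains $\partial B_{\hat R}$, so that the implicit constant depends only on the geometry of the annulus $A$ (through $R$ and $\hat R$) and not on the possibly irregular shape of $\partial D$. Once this localization is in place, the standard trace theorem and the trivial inclusion $\|\cdot\|_{H^1(A)}\leq\|\cdot\|_{H^1(\Omega)}$ close the argument.
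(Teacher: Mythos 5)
Your proposal is correct. Note that the paper itself gives no proof of this lemma: it simply defers to \cite{JLLZ-jsc17}, where the analogous trace estimate is established by the explicit Fourier-series computation on the annulus $B_R\setminus\overline{B_{\hat R}}$ --- essentially the ``alternative route'' you sketch at the end (expand $u$ in the angular Fourier modes, prove the one-dimensional bound $|\hat u_n(\hat R)|^2\lesssim \delta^{-1}\int_{\hat R}^R|\hat u_n|^2\,r\,dr+\delta\int_{\hat R}^R|\hat u_n'|^2\,r\,dr$, and distribute the weight $(1+n^2)^{1/2}$ between the $|\hat u_n|^2$ and the tangential term $\frac{n^2}{r}|\hat u_n|^2$ via Young's inequality; the same computation appears verbatim inside the proof of Lemma 4.6 of this paper). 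Your primary argument instead invokes the abstract trace theorem on Lipschitz domains, with the one genuinely necessary observation being the localization of the interior circle $\partial B_{\hat R}$ to the Lipschitz annulus $A=B_R\setminus\overline{B_{\hat R}}\subset\Omega$, so that the constant is controlled by $R$ and $\hat R$ alone. Both routes are standard and complete; the abstract version is shorter and avoids any regularity hypothesis beyond $\partial D$ Lipschitz, while the Fourier version has the advantage of producing the constant explicitly in the exact norm $\big(2\pi\sum_n(1+n^2)^{1/2}|\hat u_n|^2\big)^{1/2}$ that the paper uses, without appealing to the (standard but unstated) equivalence of that norm with the intrinsic trace-space norm on the circle. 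No gap in either version.
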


Let $\boldsymbol{\xi}=\boldsymbol{u}-\boldsymbol{u}_N^h$, where $\boldsymbol u$
and $\boldsymbol{u}_N^h$ are the solutions of the problems \eqref{Variational_1}
and \eqref{Variational_3}, respectively. Combining \eqref{eqn},
\eqref{Bilinear_1}, and \eqref{Bilinear_2}, we have from
straightforward calculations that 
\begin{eqnarray}\label{LemmaXi_1}
\vvvert\boldsymbol{\xi}\vvvert^2_{\boldsymbol{H}^{1}(\Omega)}
&=& \mu\int_{\Omega} \nabla\boldsymbol{\xi}:\nabla\overline{\boldsymbol{\xi}}
{\rm d}\boldsymbol{x}
+(\lambda+\mu)\int_{\Omega}\left(\nabla\cdot\boldsymbol{\xi}
\right)\left(\nabla\cdot\overline{\boldsymbol{\xi}}\right){\rm d}\boldsymbol{x}
+\omega^2 \int_{\Omega}
\boldsymbol{\xi}\cdot\overline{\boldsymbol{\xi}}{\rm d}\boldsymbol{x}\notag\\
&=& \Re b(\boldsymbol{\xi}, \boldsymbol{\xi})+2\omega^2
\int_{\Omega}|\boldsymbol{\xi}|^2{\rm
d}\boldsymbol{x} +\Re\int_{\partial
B_R}\mathscr{T}\boldsymbol{\xi}\cdot \overline{\boldsymbol{\xi}}{\rm d}s\notag\\
&=& \Re b(\boldsymbol{\xi}, \boldsymbol{\xi})+\Re\int_{\partial
B_R}\left(\mathscr{T}
-\mathscr{T}_N\right)\boldsymbol{\xi}\cdot\overline{\boldsymbol{\xi}}{\rm d}s +2\omega^2
\int_{\Omega}|\boldsymbol{\xi}|^2{\rm
d}\boldsymbol{x}
+\Re\int_{\partial B_R}\mathscr{T}_N
\boldsymbol{\xi}\cdot\overline{\boldsymbol{\xi}}{\rm d}s,
\end{eqnarray}
which is the error representation formula. 

In the following, we discuss the four terms in \eqref{LemmaXi_1} one by one. 
Lemma \ref{lemma_xi_5} presents the a posteriori error estimate for the
truncation of the DtN operator. Lemma \ref{lemma_xi_3}
gives the a posteriori error estimate for both of the finite element approximation
and DtN operator truncation. 

\begin{lemma}\label{lemmaHankel}
Let $0<\kappa_1<\kappa_2$ and $0<\hat R<R$. For sufficiently large $|n|$, the
following estimate holds for $j=1, 2$:
\begin{equation*}
\left|\frac{H_{n}^{(j)}(\kappa_1 R)}{H_{n}^{(j)}(\kappa_1
\hat R)}-\frac{H_{n}^{(j)}(\kappa_2 R)}{H_{n}^{(j)}(\kappa_2 \hat R)}\right|
\leq \frac{\kappa_2\left(\kappa_2-\kappa_1\right)}{|n|-1}
\left(R^2-\hat R^2\right)\bigg(\frac{\hat R}{R}\bigg)^{|n|},
\end{equation*}
where $H_n^{(1)}$ and $H_n^{(2)}$ are the Hankel functions of the first and
second kind with order $n$, respectively.  
\end{lemma}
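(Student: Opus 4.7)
The plan is to reduce the estimate to a mean-value bound in the parameter $\kappa$ and then exploit the large-order asymptotics of the Hankel functions. Since $H_{-n}^{(j)}(z) = (-1)^n H_n^{(j)}(z)$ for integer $n$, the left-hand side is invariant under $n\mapsto -n$, so I may assume $n\geq 1$. For fixed $j$ and $n$, set
\[
f(\kappa) := \frac{H_n^{(j)}(\kappa R)}{H_n^{(j)}(\kappa \hat R)}.
\]
By the fundamental theorem of calculus, $|f(\kappa_1)-f(\kappa_2)|\leq (\kappa_2-\kappa_1)\sup_{\kappa\in[\kappa_1,\kappa_2]}|f'(\kappa)|$, so it suffices to bound $|f'(\kappa)|$ pointwise.

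For the second step I would compute $f'(\kappa)$ from the quotient rule together with the identity $H_n^{(j)\prime}(z) = H_{n-1}^{(j)}(z) - (n/z)H_n^{(j)}(z)$. A key algebraic cancellation occurs: the two contributions involving $n/z$ combine to zero, leaving
\[
f'(\kappa) = \frac{H_n^{(j)}(\kappa R)}{H_n^{(j)}(\kappa\hat R)}\left[R\cdot\frac{H_{n-1}^{(j)}(\kappa R)}{H_n^{(j)}(\kappa R)} - \hat R\cdot\frac{H_{n-1}^{(j)}(\kappa\hat R)}{H_n^{(j)}(\kappa\hat R)}\right].
\]

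Next I would invoke the ascending-series expansion of $Y_n$ to obtain the large-order asymptotics
\[
H_n^{(j)}(z) = \mp\frac{i(n-1)!}{\pi}\left(\frac{2}{z}\right)^n\left[1 + \frac{z^2}{4(n-1)} + O(1/n^2)\right]
\]
as $n\to\infty$ with $z$ in a bounded set, where the sign is $-$ for $j=1$ and $+$ for $j=2$; the $J_n$ contribution is negligible in this regime. This gives the uniform expansions
\[
\frac{H_n^{(j)}(\kappa R)}{H_n^{(j)}(\kappa \hat R)} = \bigl(\hat R/R\bigr)^n\bigl(1+O(1/n)\bigr), \qquad \frac{H_{n-1}^{(j)}(z)}{H_n^{(j)}(z)} = \frac{z}{2(n-1)}\bigl(1+O(1/n)\bigr)
\]
for $\kappa\in[\kappa_1,\kappa_2]$. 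Substituting these into the bracket in the formula for $f'(\kappa)$, the leading term equals $\kappa(R^2-\hat R^2)/(2(n-1))$, while the error is $O(\kappa/n^2)$ and hence small compared to the leading term. Absorbing the $(1+O(1/n))$ factors for $n$ sufficiently large yields $|f'(\kappa)|\leq \kappa(R^2-\hat R^2)(\hat R/R)^n/(n-1)$. Inserting this in the mean-value estimate and using $\kappa\leq \kappa_2$ delivers the stated inequality.

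The main obstacle is extracting the $1/(n-1)$ factor from the bracket in $f'(\kappa)$: the leading $(\hat R/R)^n$ behavior of $f(\kappa)$ is independent of $\kappa$, so naive bounds on the two terms individually would lose the power of $1/n$. The cancellation requires retaining the subleading $z^2/(4(n-1))$ correction in the Hankel asymptotics and controlling the resulting remainders uniformly for $\kappa$ in the compact interval $[\kappa_1,\kappa_2]$, which is the essential quantitative ingredient of the proof.
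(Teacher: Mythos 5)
Your argument is correct and follows essentially the same route as the paper's: a mean-value estimate in the wavenumber variable combined with the large-order asymptotics of the Hankel functions, the crucial point in both being the cancellation of the $O(n)$ parts of the two logarithmic derivatives, which leaves the $1/(n-1)$ factor. The only difference is organizational: the paper first isolates the $Y_n$ ratio and bounds the superexponentially small $J_n/Y_n$ corrections separately before applying the mean value theorem to $Y_n(zR)/Y_n(z\hat R)$, whereas you apply it directly to the $H_n^{(j)}$ ratio and obtain the cancellation exactly from the recurrence $H_n^{(j)\prime}(z)=H_{n-1}^{(j)}(z)-(n/z)H_n^{(j)}(z)$ --- a slightly cleaner bookkeeping of the same idea.
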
 

\begin{proof}
Since the Hankel functions of the first and second kind are complex conjugate
to each other, we only need to show the proof for the Hankel function of the
first kind. 

Let $J_n$ and $Y_n$ be the Bessel functions of the first and second kind with
order $n$, respectively. Since $J_{-n}(z)=(-1)^n J_n(z),
Y_{-n}(z)=(-1)^nY_n(z)$, it suffices to show the result for positive $n$. For a
fixed $z>0$, they admit the following asymptotic
properties \cite{W-95}:
\begin{equation}\label{lemmaHankel-s1}
J_n(z)\sim \frac{1}{\sqrt{2\pi
n}}\left(\frac{ez}{2n}\right)^n,\qquad
Y_n(z)\sim -\sqrt{\frac{2}{\pi
n}}\left(\frac{ez}{2n}\right)^{-n},\quad n\to\infty. 
\end{equation} 
Define $S(z)=J_n(z)/Y_n(z)$. A simple calculation yields 
\begin{eqnarray}\label{lemmaHankel-s2}
\frac{H_n^{(1)}(zR)}{H_n^{(1)}(z\hat R)} &=&
\frac{J_n(zR)+{\rm i}Y_n(zR)}{J_n(z\hat{R})+{\rm i}Y_n(z\hat R)}
=\frac{Y_n(zR)}{Y_n(z\hat R)}\frac{1-{\rm
i}\frac{J_n(zR)}{Y_n(zR)}}{1-{\rm i}\frac{J_n(z\hat R)}{Y_n(z\hat R)}}\notag\\ 
  &=&\frac{Y_n(zR)}{Y_n(z\hat R)}\frac{1-{\rm i}S_n(zR)}{1-{\rm i}S_n(z\hat
R)}=\frac{Y_n(zR)}{Y_n(z\hat R)}+{\rm
i}\frac{Y_n(zR)}{Y_n(z\hat R)}\frac{S_n(z\hat R)-S_n(zR)}{1-{\rm
i}S_n(z\hat R)}.
\end{eqnarray}
By \eqref{lemmaHankel-s1}--\eqref{lemmaHankel-s2}, we have
\begin{eqnarray*}
S_n(z)=\frac{J_n(z)}{Y_n(z)}\sim
\frac{\frac{1}{\sqrt{2\pi n}}
\left(\frac{ez}{2n}\right)^n}{-\sqrt{\frac{2}{\pi
n}}\left(\frac{ez}{2n}\right)^{-n}}
\sim
-\frac{1}{2}\left(\frac{ez}{2n}\right)^{2n}
\end{eqnarray*}
and
\begin{eqnarray*}
&& \left|\frac{H_{n}^{(1)}(\kappa_1
R)}{H_{n}^{(1)}(\kappa_1 \hat R)}-\frac{H_{n}^{(1)}(\kappa_2
R)}{H_{n}^{(1)}(\kappa_2 \hat R)}\right|
\leq \left|\frac{Y_n(\kappa_1 R)}{Y_n(\kappa_1
\hat R)}-\frac{Y_n(\kappa_2 R)}{Y_n(\kappa_2 \hat R)}\right|
+\left|\frac{Y_n(\kappa_1 R)}{Y_n(\kappa_1
\hat R)}\frac{S_n(\kappa_1 \hat R)}{1-{\rm i}S_n(\kappa_1 \hat R)}\right| \\
&&\quad +\left|\frac{Y_n(\kappa_1
R)}{Y_n(\kappa_1 \hat R)}\frac{S_n(\kappa_1 R)}{1-{\rm i}S_n(\kappa_1
\hat R)}\right| +\left|\frac{Y_n(\kappa_2 R)}{Y_n(\kappa_2
\hat R)}\frac{S_n(\kappa_2 \hat R)}{1-{\rm i}S_n(\kappa_2 \hat R)}\right|
+\left|\frac{Y_n(\kappa_2 R)}{Y_n(\kappa_2
\hat R)}\frac{S_n(\kappa_2 R)}{1-{\rm i}S_n(\kappa_2 \hat R)}\right|.
\end{eqnarray*}
It is easy to verify that 
\begin{eqnarray*}
\left|\frac{S_n(z R)}{1-{\rm i}S_n(z
\hat R)}\right|\leq \left(\frac{ezR}{2n}\right)^{2n},\quad
\left|\frac{S_n(z \hat R)}{1-{\rm i}S_n(z
\hat R)}\right|\leq \left(\frac{ez\hat R}{2n}\right)^{2n}
\end{eqnarray*}
and
\begin{eqnarray*}
\frac{Rz Y^\prime_n(z R)}{Y_n(z R)} \sim \frac{z^2 R^2}{2(n-1)}-n,\quad
\frac{Y_n(z R)}{Y_n(z \hat R)}\sim \left(\frac{\hat R}{R}\right)^{|n|}.
\end{eqnarray*}
Combining the above estimates, we have for $R>\hat R$ and $\kappa_2>\kappa_1$
that 
\begin{eqnarray*}
&& \left|\frac{Y_n(\kappa_1 R)}{Y_n(\kappa_1
\hat R)}\frac{S_n(\kappa_1 \hat R)}{1-{\rm i}S_n(\kappa_1 \hat R)}\right| 
+\left|\frac{Y_n(\kappa_1 R)}{Y_n(\kappa_1
\hat R)}\frac{S_n(\kappa_1 R)}{1-{\rm i}S_n(\kappa_1 \hat R)}\right| 
+\left|\frac{Y_n(\kappa_2 R)}{Y_n(\kappa_2
\hat R)}\frac{S_n(\kappa_2 \hat R)}{1-{\rm i}S_n(\kappa_2 \hat R)}\right|\\
&&\quad +\left|\frac{Y_n(\kappa_2
R)}{Y_n(\kappa_2 \hat R)}\frac{S_n(\kappa_2 R)}{1-{\rm i}S_n(\kappa_2
\hat R)}\right| \leq 2\left(\frac{e\kappa_2
R}{2n}\right)^{2n}\left(\left|\frac{Y_n(\kappa_1 R)}{Y_n(\kappa_1
\hat R)}\right| +\left|\frac{Y_n(\kappa_2
R)}{Y_n(\kappa_2 \hat R)}\right|\right).
\end{eqnarray*}

Define $F(z)=Y_n(z R)/Y_n(z \hat R)$. By the mean value
theorem, there exits $\xi\in(\kappa_1, \kappa_2)$ such that
\begin{eqnarray*}
F(\kappa_1)-F(\kappa_2) &=&
F'(\xi)(\kappa_1-\kappa_2)\\
&=& \frac{R Y'_n(\xi R) Y_n(\xi \hat R)- \hat R 
Y_n(\xi R) Y'_n(\xi \hat R)}{Y_n(\xi \hat R)^2}(\kappa_1-\kappa_2)\\
&=& \left(\frac{R\xi Y'_n(\xi R)}{Y_n(\xi
R)}-\frac{\hat R\xi Y'_n(\xi \hat R)}{Y_n(\xi \hat R)}\right)
\frac{Y_n(\xi R)}{Y_n(\xi \hat R)}\frac{\kappa_1-\kappa_2}{\xi}\\
&\sim& \frac{\xi\left(\kappa_1-\kappa_2\right)}{2(n-1)}
\left(R^2-\hat R^2\right)\frac{ Y_n(\xi R)}{Y_n(\xi \hat R)}.
\end{eqnarray*}
Therefore,
\begin{eqnarray*}
\left|\frac{H_{n}^{(1)}(\kappa_1
R)}{H_{n}^{(1)}(\kappa_1 \hat R)}-\frac{H_{n}^{(1)}(\kappa_2
R)}{H_{n}^{(1)}(\kappa_2 \hat R)}\right|
&\leq& \left|\frac{\xi\left(\kappa_1-\kappa_2\right)}{2(n-1)}
\right|\left|\left(R^2-\hat R^2\right)\frac{Y_n(\xi R)}{Y_n(\xi
\hat R)}\right|\\
&&\quad +2\left(\frac{e\kappa_2
R}{2n}\right)^{2n}\left(\left|\frac{Y_n(\kappa_1 R)}{Y_n(\kappa_1
\hat R)}\right| +\left|\frac{Y_n(\kappa_2
R)}{Y_n(\kappa_2 \hat R)}\right|\right)\\
&\leq &\frac{\kappa_2\left(\kappa_2-\kappa_1\right)}{|n|-1}
\left(R^2-\hat R^2\right)\left(\frac{\hat R}{R}\right)^{|n|},
\end{eqnarray*}
which completes the proof. 
\end{proof}

\begin{lemma}\label{lemma_xi_5}
Let $\boldsymbol u\in\boldsymbol H^1(\Omega)$ be the solution of the
variational problem \eqref{Variational_1}. For any $\boldsymbol v\in\boldsymbol
H^1(\Omega)$, the following estimate holds
\begin{equation*}
\left|\int_{\partial
B_R}\left(\mathscr{T}-\mathscr{T}_N\right)\boldsymbol{u}\cdot
\overline{\boldsymbol{v}}\,ds \right|
\leq C\max_{|n|>N}\left(|n|\bigg(\frac{\hat R}{R}\bigg)^{|n|}\right)
\|\boldsymbol{u}^{\rm inc}\|_{\boldsymbol{H}^{1}(\Omega)}
\|\boldsymbol{v}\|_{\boldsymbol{H}^{1}(\Omega)},
\end{equation*}
where $C$ is a positive constant independent of $N$.
\end{lemma}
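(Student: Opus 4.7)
The plan is to expand $\boldsymbol u$ and $\boldsymbol v$ in Fourier series on $\partial B_R$ and use orthogonality of $\{e^{in\theta}\}$ to reduce the boundary integral to the tail sum
\begin{equation*}
\int_{\partial B_R}(\mathscr T-\mathscr T_N)\boldsymbol u\cdot\overline{\boldsymbol v}\,ds
= 2\pi R\sum_{|n|>N}\bigl(M_n\hat{\boldsymbol u}_n(R)\bigr)\cdot\overline{\hat{\boldsymbol v}_n(R)}.
\end{equation*}
The task then is to bound each tail term with exponential decay in $|n|$, since a naive estimate using only $\|\mathscr T-\mathscr T_N\|_{H^{1/2}\to H^{-1/2}}$ cannot succeed; as emphasized in the introduction, that operator norm does not tend to zero.

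To extract the decay I would invoke the Helmholtz decomposition $\boldsymbol u=\nabla\phi+\mathbf{curl}\,\psi$, where $\phi,\psi$ satisfy \eqref{he}--\eqref{rc}. Since $\overline D\subset B_{\hat R}$, both $\phi$ and $\psi$ are radiating solutions in $\mathbb R^2\setminus\overline{B_{\hat R}}$ and admit the expansions
\begin{equation*}
\hat\phi_n(r)=\frac{H_n^{(1)}(\kappa_1 r)}{H_n^{(1)}(\kappa_1\hat R)}\hat\phi_n(\hat R),\qquad
\hat\psi_n(r)=\frac{H_n^{(1)}(\kappa_2 r)}{H_n^{(1)}(\kappa_2\hat R)}\hat\psi_n(\hat R),\qquad r\geq\hat R.
\end{equation*}
Substituting into $\hat{\boldsymbol u}_n(R)$ writes it as a linear combination of $\hat\phi_n(\hat R)$ and $\hat\psi_n(\hat R)$ with coefficients built from the Hankel ratios $H_n^{(1)}(\kappa_j R)/H_n^{(1)}(\kappa_j\hat R)$ and $(H_n^{(1)})'(\kappa_j R)/H_n^{(1)}(\kappa_j\hat R)$ for $j=1,2$. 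Inserting this into $M_n\hat{\boldsymbol u}_n(R)$ and using the entries of $M_n$ from Appendix A, a mechanical but delicate reorganization should isolate the difference
\begin{equation*}
\frac{H_n^{(1)}(\kappa_1 R)}{H_n^{(1)}(\kappa_1\hat R)}-\frac{H_n^{(1)}(\kappa_2 R)}{H_n^{(1)}(\kappa_2\hat R)},
\end{equation*}
to which Lemma \ref{lemmaHankel} applies and yields $|M_n\hat{\boldsymbol u}_n(R)|\lesssim |n|(\hat R/R)^{|n|}\bigl(|\hat\phi_n(\hat R)|+|\hat\psi_n(\hat R)|\bigr)$. The gain of $|n|^{-1}$ supplied by Lemma \ref{lemmaHankel} is critical: the entries of $M_n$ grow like $|n|$ and differentiating the Hankel expansion contributes another $|n|$, so without the cancellation forced by the Hankel-ratio difference one only gets an $n^2(\hat R/R)^{|n|}$ bound.

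To close the argument I would apply Cauchy--Schwarz to the tail sum, pull out $\max_{|n|>N}|n|(\hat R/R)^{|n|}$, and control the remaining Fourier sums by Parseval's identity together with Lemma \ref{Poincare}, obtaining $\sum_n|\hat\phi_n(\hat R)|^2\lesssim\|\phi\|_{H^{1/2}(\partial B_{\hat R})}^2\lesssim\|\phi\|_{H^1(\Omega)}^2\lesssim\|\boldsymbol u\|_{\boldsymbol H^1(\Omega)}^2$ and similarly for $\psi$; the trace factor for $\boldsymbol v$ is handled by a standard trace inequality on $\partial B_R$. The stability estimate \eqref{vp_se} then replaces $\|\boldsymbol u\|_{\boldsymbol H^1(\Omega)}$ by $\|\boldsymbol u^{\rm inc}\|_{\boldsymbol H^1(\Omega)}$, giving the asserted bound. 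The main obstacle is the algebraic reorganization that pairs up the $\kappa_1$ and $\kappa_2$ contributions in $M_n\hat{\boldsymbol u}_n(R)$ into exactly the Hankel-ratio difference for which Lemma \ref{lemmaHankel} is tailored; any grouping that treats the two wavenumbers separately loses a factor of $|n|$ and misses the claimed estimate.
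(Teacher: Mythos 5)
Your overall strategy — reduce to the tail sum, use the Helmholtz decomposition and the Hankel-ratio propagation from $\hat R$ to $R$, invoke Lemma \ref{lemmaHankel}, and close with Cauchy--Schwarz, trace estimates and \eqref{vp_se} — is the paper's. But the central quantitative claim is off by a factor of $|n|$, and the closing step inherits the error. The bound $|M_n\hat{\boldsymbol u}_n(R)|\lesssim |n|(\hat R/R)^{|n|}\bigl(|\hat\phi_n(\hat R)|+|\hat\psi_n(\hat R)|\bigr)$ cannot hold: test it on a pure compressional field $\boldsymbol u=\nabla\phi$, $\psi\equiv 0$. The radial component of $\mathscr B\boldsymbol u$ contains $\mu\,\partial_r^2\phi$, whose $n$-th coefficient behaves like $\mu\,(n^2/R^2)\,\bigl(H_n^{(1)}(\kappa_1R)/H_n^{(1)}(\kappa_1\hat R)\bigr)\phi_n(\hat R)\sim \mu\,(n^2/R^2)(\hat R/R)^{|n|}\phi_n(\hat R)$, and there is nothing for the Hankel-ratio difference of Lemma \ref{lemmaHankel} to cancel against: that difference only controls the mixing between the $\kappa_1$ and $\kappa_2$ channels, which is absent here. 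So the sharp bound \emph{in terms of the potentials} is $n^2(\hat R/R)^{|n|}$, and your final step — pulling out $\max_{|n|>N}|n|(\hat R/R)^{|n|}$ and controlling the leftover by the $L^2$ quantity $\sum_n|\hat\phi_n(\hat R)|^2$ — leaves one power of $|n|$ unaccounted for.

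The paper's resolution is to go one step further than you do: after expressing $\boldsymbol u_n(R)$ through $(\phi_n(\hat R),\psi_n(\hat R))$, it converts back to $\boldsymbol u_n(\hat R)$ via the inverse matrix $V_n(\hat R)/\Lambda_n(\hat R)$. The product of the potential-to-displacement matrix at $R$ with $V_n(\hat R)$ exhibits a \emph{second} cancellation, $\alpha_{1n}(R)\alpha_{2n}(\hat R)-n^2/(R\hat R)=O(1)$ (from the asymptotics behind Lemma \ref{Lamn}), which, combined with Lemma \ref{lemmaHankel} for the remaining cross terms, yields $|A_{ij}|\lesssim|n|(\hat R/R)^{|n|}$ and hence $|\boldsymbol u_n(R)|\lesssim|n|(\hat R/R)^{|n|}|\boldsymbol u_n(\hat R)|$. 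Applying $M_n=O(|n|)$ then gives exactly $|n|^2(\hat R/R)^{|n|}$, split as $|n|\cdot|n|^{1/2}\cdot|n|^{1/2}$ between the maximum and the two $H^{1/2}$ traces. This detour also avoids a second weak point in your write-up: the chain $\|\phi\|_{H^{1/2}(\partial B_{\hat R})}\lesssim\|\phi\|_{H^1(\Omega)}\lesssim\|\boldsymbol u\|_{\boldsymbol H^1(\Omega)}$ is unjustified as written, since $\phi=-\kappa_1^{-2}\nabla\cdot\boldsymbol u$ is one derivative of $\boldsymbol u$, so controlling $\phi$ in $H^1$ (or even its trace) requires an interior elliptic regularity argument near $\partial B_{\hat R}$ that you do not supply; working with $\boldsymbol u_n(\hat R)$ only requires Lemma \ref{Poincare} applied to $\boldsymbol u$ itself. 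Your route could be repaired by accepting the $n^2$ bound and paying for it with $H^{1/2}(\partial B_{\hat R})$ norms of $\phi,\psi$ plus interior regularity, but as written the proof does not close.
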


\begin{proof}
Recalling the Helmholtz decomposition $\boldsymbol u=\nabla\phi+{\bf
curl}\psi$, we have from the Fourier series expansions in \eqref{Fourier_Expan}
that 
\begin{equation*}
\phi_n(R)=\frac{H_n^{(1)}(\kappa_1 R)}{H_{n}^{(1)}(\kappa_1
\hat R)}\phi_n(\hat R), \quad \psi_n(R)=\frac{H_n^{(1)}(\kappa_2
R)}{H_{n}^{(1)}(\kappa_2 \hat R)}\psi_n(\hat R).
\end{equation*}
Comparing the Fourier coefficients of $\boldsymbol u$ and $\phi, \psi$ in the
Helmholtz decomposition gives
\begin{eqnarray*}
\boldsymbol{u}_n(R) &=&
\begin{bmatrix}
\alpha_{1n}(R) & \frac{{\rm i}n}{R}\\[2pt]
\frac{{\rm i}n}{R} & -\alpha_{2n}(R)
\end{bmatrix}
\begin{bmatrix}
\phi_n(R)\\ \psi_n(R)
\end{bmatrix}\\
&=&\begin{bmatrix}
\alpha_{1n}(R) & \frac{{\rm i}n}{R}\\[2pt]
\frac{{\rm i}n}{R} & -\alpha_{2n}(R)
\end{bmatrix}
\begin{bmatrix}
\frac{H_n^{(1)}(\kappa_1 R)}{H_{n}^{(1)}(\kappa_1 \hat R)}
& 0\\
0 & \frac{H_n^{(1)}(\kappa_2 R)}{H_{n}^{(1)}(\kappa_2
\hat R)}
\end{bmatrix}
\begin{bmatrix}
\phi_n(\hat R)\\ \psi_n(\hat R)
\end{bmatrix}\\
&=& \begin{bmatrix}
\alpha_{1n}(R) & \frac{{\rm i}n}{R}\\[2pt]
\frac{{\rm i}n}{R} & -\alpha_{2n}(R)
\end{bmatrix}
\begin{bmatrix}
\frac{H_n^{(1)}(\kappa_1 R)}{H_{n}^{(1)}(\kappa_1 \hat R)}
& 0\\
0 & \frac{H_n^{(1)}(\kappa_2 R)}{H_{n}^{(1)}(\kappa_2
\hat R)}
\end{bmatrix}
\begin{bmatrix}
-\alpha_{2n}(\hat R) & -\frac{{\rm i}n}{\hat R}\\[2pt]
-\frac{{\rm i}n}{\hat R} & \alpha_{1n}(\hat R)
\end{bmatrix}
\frac{\boldsymbol{u}_n(\hat R)}{\Lambda_n(\hat R)}\\
&=& -\frac{1}{\Lambda_n(\hat R)}\begin{bmatrix}
A_{11} & A_{12}\\ A_{21} & A_{22}
\end{bmatrix}\boldsymbol{u}_n(\hat R),
\end{eqnarray*}	
where $\alpha_{jn}, \Lambda_n$ is given in \eqref{alpha_jn} and 
\begin{eqnarray*}
A_{11} &=& \frac{H_n^{(1)}(\kappa_1 R)}{H_n^{(1)}(\kappa_1
\hat R)}\alpha_{1n}(R)\alpha_{2n}(\hat R)
-\frac{n^2}{R \hat R}\frac{H_n^{(1)}(\kappa_2
R)}{H_n^{(1)}(\kappa_2 \hat R)},\\
A_{12} &=& \frac{H_n^{(1)}(\kappa_1 R)}{H_n^{(1)}(\kappa_1
\hat R)}\alpha_{1n}(R)\frac{{\rm i}n}{\hat R}
-\frac{{\rm i}n}{R}\alpha_{1n}(\hat R)
\frac{H_n^{(1)}(\kappa_2 R)}{H_n^{(1)}(\kappa_2 \hat R)},\\
A_{21} &=& \frac{H_n^{(1)}(\kappa_1 R)}{H_n^{(1)}(\kappa_1
\hat R)}\alpha_{2n}(\hat R)\frac{{\rm i}n}{R}
-\frac{{\rm i}n}{\hat R}\alpha_{2n}(R)
\frac{H_n^{(1)}(\kappa_2 R)}{H_n^{(1)}(\kappa_2 \hat R)},\\	
A_{22} &=& \frac{H_n^{(1)}(\kappa_2 R)}{H_n^{(1)}(\kappa_2
\hat R)}\alpha_{1n}(\hat R)\alpha_{2n}(R)
-\frac{n^2}{R \hat R}\frac{H_n^{(1)}(\kappa_1
R)}{H_n^{(1)}(\kappa_1 \hat R)}.				
\end{eqnarray*}

By Lemma \ref{lemmaHankel}, we have 
\begin{eqnarray*}
\left|A_{11}\right| &\leq& \left| \frac{H_n^{(1)}(\kappa_1
R)}{H_n^{(1)}(\kappa_1 \hat R)}\right|
\left|\alpha_{1n}(R)\alpha_{2n}(\hat R)-\frac{n^2}{R
\hat R}\right| +\frac{n^2}{R \hat R}\left|\frac{H_n^{(1)}(\kappa_1
R)}{H_n^{(1)}(\kappa_1 \hat R)}-\frac{H_n^{(1)}(\kappa_2 R)}{H_n^{(1)}(\kappa_2
\hat R)}\right|\\
&\leq& \bigg(\frac{\hat R}{R}\bigg)^{|n|}\left|\left(\frac{\kappa_1^2
R}{2(n-1)}-\frac{n}{R}\right) \left(\frac{\kappa_2^2
\hat R}{2(n-1)}-\frac{n}{\hat R}\right)-\frac{n^2}{R \hat R}\right|\\
&&\quad +\frac{n^2}{R
\hat R}\frac{\kappa_2\left(\kappa_1-\kappa_2\right)}{|n|-1}
\left(R^2-\hat R^2\right)\bigg(\frac{\hat{R}}{R}\bigg)^{|n|}\\
&\leq& \kappa_2\left(\kappa_2-\kappa_1\right)\left(R^2-\hat
R^2\right)\frac{2|n|}{R \hat R}
\bigg(\frac{\hat R}{R}\bigg)^{|n|}\leq C|n|
\bigg(\frac{\hat R}{R}\bigg)^{|n|},
\end{eqnarray*}
where $C$ is a positive constant independent of $n$. Similarly, it can be shown
that there exists a positive constant $C$ independent of $n$ such that 
\[
 |A_{ij}|\leq C|n| \bigg(\frac{\hat R}{R}\bigg)^{|n|},\quad i,j=1, 2. 
\]
The proofs are omitted for brevity. 

Combining the above estimates and Lemma \ref{Lamn}, we obtain 
\[
\left|\boldsymbol{u}^{(n)}(R)\right|\leq
C|n|\bigg(\frac{\hat R}{R}\bigg)^{|n|}\left|\boldsymbol{u}^{(n)}(\hat
R)\right|. 
\]
Combining the above estimate with Lemma \ref{Poincare} and \eqref{vp_se}
yields 
\begin{eqnarray*}
&& \left|\int_{\partial
B_R}\left(\mathscr{T}-\mathscr{T}_N\right)\boldsymbol{u}\cdot
\overline{\boldsymbol{v}}{\rm d}s \right|
 =2\pi R\sum\limits_{|n|>N}\begin{bmatrix}
-\frac{\mu}{R}+\frac{\omega^2}{\Lambda_n}\alpha_{2n}(R) & 
{\rm i}n\left(-\frac{\mu}{R}+\frac{\omega^2}{\Lambda}\frac{1}{R}\right)\\
-{\rm i}n\left(-\frac{\mu}{R}+\frac{\omega^2}{\Lambda}\frac{1}{R}\right) & 
-\frac{\mu}{R}+\frac{\omega^2}{\Lambda}\alpha_{1n}(R)
\end{bmatrix}
\boldsymbol{u}_n(R) \cdot \overline{\boldsymbol{v}_n(R)}
\notag\\
&&\qquad \lesssim 2\pi R\sum\limits_{|n|>N}
|n|^2\bigg(\frac{\hat R}{R}\bigg)^{|n|} 
\boldsymbol{u}_n(\hat R)\cdot\overline{\boldsymbol{v}_n(R)} \\
&& \qquad \lesssim 2\pi R\sum\limits_{|n|>N} 
\bigg(\frac{\hat R}{R}\bigg)^{|n|} |n|
\left(|n|^{1/2}\boldsymbol{u}_n(\hat R)\right)\left(
|n|^{1/2}\overline{\boldsymbol{v}_n(R)}\right)\\
&& \qquad \leq 
C\max_{|n|>N}\left(|n|\bigg(\frac{\hat R}{R}\bigg)^{|n|}\right) 
\|\boldsymbol{u}\|_{\boldsymbol{H}^{1/2}(\partial
B_{\hat R})}\|\boldsymbol{v}\|_{ \boldsymbol{H}^{1/2}(\partial B_{R})}\\
&& \qquad  \leq
C\max_{|n|>N}\left(|n|\bigg(\frac{\hat R}{R}\bigg)^{|n|}\right)
\|\boldsymbol{u}\|_{\boldsymbol{H}^{1}(\Omega)}
\|\boldsymbol{v}\|_{\boldsymbol{H}^{1}(\Omega)}\\
&& \qquad \leq C\max_{|n|>N}\left(|n|\bigg(\frac{\hat R}{R}\bigg)^{|n|}\right)
\|\boldsymbol{u}^{\rm inc}\|_{\boldsymbol{H}^{1}(\Omega)}
\|\boldsymbol{v}\|_{\boldsymbol{H}^{1}(\Omega)},
\end{eqnarray*}
which completes the proof.
\end{proof}

In Lemma \ref{lemma_xi_5}, it is shown that the truncation error of the DtN
operator decay exponentially with respect to the truncation parameter $N$. The
result implies that $N$ can be small in practice. 

\begin{lemma}\label{lemma_xi_3}
Let $\boldsymbol{v}$ be any function in $\boldsymbol{H}^{1}(\Omega)$, we have
\begin{equation*}
\left|b(\boldsymbol{\xi}, \boldsymbol{v})+\int_{\partial B_R}
\left(\mathscr{T}-\mathscr{T}_N\right)\boldsymbol{\xi}\cdot 
\overline{\boldsymbol{v}}{\rm d}s\right|\lesssim
\left( \left(\sum\limits_{K\in\mathcal M_h}\eta_{K}^2\right)^{1/2}
+\max_{|n|>N}\left(|n|\bigg(\frac{\hat R}{R}\bigg)^{|n|}\right)
\|\boldsymbol{u}^{\rm inc}\|_{\boldsymbol{H}^{1}(\Omega)}\right)
\|\boldsymbol{v}\|_{\boldsymbol{H}^{1}(\Omega)}.
\end{equation*}
\end{lemma}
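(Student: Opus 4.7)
The plan is to recognize the left-hand side as $b_N(\boldsymbol{\xi}, \boldsymbol{v})$, evaluate $b_N(\boldsymbol{u}, \boldsymbol{v})$ directly from the PDE, and then use Galerkin orthogonality plus element-wise integration by parts to handle $b_N(\boldsymbol{u}_N^h, \boldsymbol{v})$. Comparing \eqref{Bilinear_1} with \eqref{Bilinear_2}, for every $\boldsymbol{u}, \boldsymbol{w} \in \boldsymbol{H}^1(\Omega)$ one has $b(\boldsymbol{u}, \boldsymbol{w}) + \int_{\partial B_R}(\mathscr{T} - \mathscr{T}_N)\boldsymbol{u}\cdot\overline{\boldsymbol{w}}\, ds = b_N(\boldsymbol{u}, \boldsymbol{w})$, so the quantity in the lemma equals $|b_N(\boldsymbol{\xi}, \boldsymbol{v})|$. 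Since the lemma will be applied with $\boldsymbol{v} = \boldsymbol{\xi} \in \boldsymbol{H}^1_{\partial D}(\Omega)$ in \eqref{LemmaXi_1}, I will take $\boldsymbol{v} \in \boldsymbol{H}^1_{\partial D}(\Omega)$ and let $\boldsymbol{v}^h \in \boldsymbol{V}_{h, \partial D}$ be a Scott--Zhang quasi-interpolant of $\boldsymbol{v}$.

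First I would evaluate $b_N(\boldsymbol{u}, \boldsymbol{v})$ directly. Element-wise integration by parts on the volume terms, combined with $\mathscr{R}\boldsymbol{u} = 0$ in each $K$ from \eqref{Navier-1}, the continuity of the elastic flux of $\boldsymbol{u}$ across interior edges, the boundary identity $\mathscr{B}\boldsymbol{u} = \mathscr{T}\boldsymbol{u}$ on $\partial B_R$ from \eqref{tbc}, and the vanishing of $\boldsymbol{v}$ on $\partial D$, collapses the expression to
\[
b_N(\boldsymbol{u}, \boldsymbol{v}) = \int_{\partial B_R}(\mathscr{T} - \mathscr{T}_N)\boldsymbol{u}\cdot\overline{\boldsymbol{v}}\, ds.
\]
Galerkin orthogonality \eqref{Variational_3} gives $b_N(\boldsymbol{u}_N^h, \boldsymbol{v}^h) = 0$, so $b_N(\boldsymbol{u}_N^h, \boldsymbol{v}) = b_N(\boldsymbol{u}_N^h, \boldsymbol{v} - \boldsymbol{v}^h)$, and the analogous element-wise integration by parts applied to $\boldsymbol{u}_N^h$ (whose flux does \emph{not} match across edges and whose residual is nonzero) produces
\[
b_N(\boldsymbol{u}_N^h, \boldsymbol{v} - \boldsymbol{v}^h) = -\sum_{K \in \mathcal{M}_h}\int_K \mathscr{R}\boldsymbol{u}_N^h\cdot\overline{(\boldsymbol{v} - \boldsymbol{v}^h)}\, dx - \tfrac{1}{2}\sum_{K}\sum_{e \in \partial K}\int_e J_e\cdot\overline{(\boldsymbol{v} - \boldsymbol{v}^h)}\, ds,
\]
the $\partial D$-flux terms being killed by $\boldsymbol{v} - \boldsymbol{v}^h = 0$ there.

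Subtracting the two identities,
\[
b_N(\boldsymbol{\xi}, \boldsymbol{v}) = \int_{\partial B_R}(\mathscr{T} - \mathscr{T}_N)\boldsymbol{u}\cdot\overline{\boldsymbol{v}}\, ds + \sum_{K}\int_K \mathscr{R}\boldsymbol{u}_N^h\cdot\overline{(\boldsymbol{v} - \boldsymbol{v}^h)}\, dx + \tfrac{1}{2}\sum_K\sum_{e\in\partial K}\int_e J_e\cdot\overline{(\boldsymbol{v} - \boldsymbol{v}^h)}\, ds.
\]
The DtN-truncation integral is bounded directly by Lemma \ref{lemma_xi_5} applied to $\boldsymbol{v}$, contributing the exponentially-decaying term. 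The residual and jump sums are bounded in standard fashion: local Scott--Zhang estimates $\|\boldsymbol{v} - \boldsymbol{v}^h\|_{\boldsymbol{L}^2(K)} \lesssim h_K \|\boldsymbol{v}\|_{\boldsymbol{H}^1(\widetilde{K})}$ and $\|\boldsymbol{v} - \boldsymbol{v}^h\|_{\boldsymbol{L}^2(e)} \lesssim h_e^{1/2}\|\boldsymbol{v}\|_{\boldsymbol{H}^1(\widetilde{e})}$ on appropriate patches, together with Cauchy--Schwarz and the finite-overlap property of $\mathcal{M}_h$, bound them by $(\sum_K \eta_K^2)^{1/2}\|\boldsymbol{v}\|_{\boldsymbol{H}^1(\Omega)}$.

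The main technical obstacle is the sign and orientation bookkeeping in the element-wise integration by parts: correctly pairing the opposite normals on adjacent elements to produce the interior $J_e$, correctly recombining the one-sided flux $\mathscr{B}\boldsymbol{u}_N^h$ with the boundary term $-\int_{\partial B_R}\mathscr{T}_N\boldsymbol{u}_N^h\cdot\overline{(\boldsymbol{v} - \boldsymbol{v}^h)}\, ds$ from $b_N$ to form the boundary jump $J_e/2 = \mathscr{T}_N\boldsymbol{u}_N^h - \mathscr{B}\boldsymbol{u}_N^h$ on $\partial B_R$, and using $\boldsymbol{v} - \boldsymbol{v}^h = 0$ on $\partial D$ to discard the stress contributions there. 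Once this manipulation is in place, the estimate reduces to a routine application of quasi-interpolation estimates and Lemma \ref{lemma_xi_5}.
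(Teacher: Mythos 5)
Your proposal is correct and follows essentially the same route as the paper: insert $\pm b_N(\boldsymbol{u}_N^h,\boldsymbol{v})$, use Galerkin orthogonality and the identity $b_N-b=\int_{\partial B_R}(\mathscr{T}-\mathscr{T}_N)(\cdot)\cdot\overline{(\cdot)}\,{\rm d}s$ to reduce the left-hand side to $-b_N(\boldsymbol{u}_N^h,\boldsymbol{v}-\boldsymbol{v}^h)+\int_{\partial B_R}(\mathscr{T}-\mathscr{T}_N)\boldsymbol{u}\cdot\overline{\boldsymbol{v}}\,{\rm d}s$, then apply element-wise integration by parts with Scott--Zhang estimates and Lemma \ref{lemma_xi_5}. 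The only cosmetic difference is that you evaluate $b_N(\boldsymbol{u},\boldsymbol{v})$ from the strong form of the PDE, whereas the paper simply invokes $b(\boldsymbol{u},\boldsymbol{v})=0$ from the weak formulation; your explicit restriction to $\boldsymbol{v}\in\boldsymbol{H}^1_{\partial D}(\Omega)$ is the same implicit assumption the paper makes when it uses \eqref{Variational_1}.
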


\begin{proof}
For any function $\boldsymbol{v}$ in $\boldsymbol{H}^{1}(\Omega)$, we  have
\begin{eqnarray*}
&& b(\boldsymbol{\xi}, \boldsymbol{v})
+\int_{\partial
B_R}\left(\mathscr{T}-\mathscr{T}_N\right)\boldsymbol{\xi}\cdot
\overline{\boldsymbol{v}}{\rm d}s  =b(\boldsymbol{u},
\boldsymbol{v})-b(\boldsymbol{u}_N^{h}, \boldsymbol{v})
+\int_{\partial B_R}\left(\mathscr{T}-\mathscr{T}_N\right)\boldsymbol{\xi}\cdot
\overline{\boldsymbol{v}}{\rm d}s  \\
&&\qquad =b(\boldsymbol{u},
\boldsymbol{v})-b_N(\boldsymbol{u}_N^h,
\boldsymbol{v})+b_N(\boldsymbol{u}_N^h, \boldsymbol{v})
-b(\boldsymbol{u}_N^{h}, \boldsymbol{v})
+\int_{\partial B_R}\left(\mathscr{T}-\mathscr{T}_N\right)\boldsymbol{\xi}\cdot
\overline{\boldsymbol{v}}{\rm d}s  \\  \\
&&\qquad =b(\boldsymbol{u}, \boldsymbol{v})-b_N(\boldsymbol{u}_N^h,
\boldsymbol{v}^h) -b_N(\boldsymbol{u}_N^h, \boldsymbol{v}-\boldsymbol{v}^h)
+\int_{\partial
B_R}\left(\mathscr{T}-\mathscr{T}_N\right)\boldsymbol{u}_N^h\cdot
\overline{\boldsymbol{v}}{\rm d}s  \\
&&\qquad\qquad +\int_{\partial
B_R}\left(\mathscr{T}-\mathscr{T}_N\right)\boldsymbol{\xi}\cdot
\overline{\boldsymbol{v}}{\rm d}s  \\
&&\qquad =-b_N(\boldsymbol{u}_N^h, \boldsymbol{v}-\boldsymbol{v}^h)
+\int_{\partial B_R}\left(\mathscr{T}-\mathscr{T}_N\right)\boldsymbol{u}\cdot
\overline{\boldsymbol{v}}{\rm d}s.
\end{eqnarray*}

For any  $\boldsymbol v^h\in \boldsymbol V_{h, \partial D}$, it follows
from the integration by parts that 
\begin{eqnarray}\label{lemma_xi_3_s1}
&& -b_N(\boldsymbol{u}_N^h, \boldsymbol{v}-\boldsymbol{v}^h)\notag\\
&=& -\sum\limits_{K\in\mathcal M_h}\left\{ \mu\int_{K}
\nabla\boldsymbol{u}_N^h:\nabla 
\big(\overline{\boldsymbol v}-\overline{\boldsymbol{v}^h}\big){\rm
d}\boldsymbol{x} +(\lambda+\mu)\int_{K}
\big(\nabla\cdot\boldsymbol{u}_N^h\big) \nabla\cdot\big(\overline{\boldsymbol
v}-\overline{\boldsymbol v}^h\big){\rm d}\boldsymbol{x}\right\}\notag\\
&& -\sum\limits_{K\in\mathcal
M_h}\left\{-\omega^2\int_{K}\boldsymbol{u}_N^h\cdot 
\big(\overline{\boldsymbol v}-\overline{\boldsymbol{v}^h}\big){\rm
d}\boldsymbol{x} -\int_{\partial B_R\cap \partial K}
\mathscr{T}\boldsymbol{u}_N^{h}\cdot
\overline{\boldsymbol v}-\overline{\boldsymbol{v}^h}{\rm d}s\right\}\notag\\
&=& \sum\limits_{K\in\mathcal M_h} \left\{ -\int_{\partial
K}\left[\mu\nabla\boldsymbol{u}_N^h\cdot\boldsymbol{\nu}
+(\lambda+\mu)\nabla(\nabla\cdot\boldsymbol{u}_N^h)\cdot\boldsymbol{\nu}\right]
\cdot\big(\overline{\boldsymbol v}-\overline{\boldsymbol{v}^h}\big)
{\rm d}\boldsymbol{x} +\int_{\partial B_R\cap \partial
K}\mathscr{T}\boldsymbol{u}_N^h\cdot
\big(\overline{\boldsymbol v}-\overline{\boldsymbol{v}^h}\big){\rm
d}s\right\}\notag\\
&& +\sum\limits_{K\in\mathcal M_h}\int_{K}
\left[\mu\Delta\boldsymbol{u}_N^h
+(\lambda+\mu)\nabla\nabla\cdot\boldsymbol{u}_N^h+\omega^2\boldsymbol{u}
_N^h\right]\cdot\big( \overline{\boldsymbol
v}-\overline{\boldsymbol{v}^h}\big){\rm d}\boldsymbol{x}\notag\\
&=&  \sum\limits_{K\in\mathcal M_h}\left[\int_{K}\mathscr R\boldsymbol{u}_N^h
\cdot \big( \overline{\boldsymbol
v}-\overline{\boldsymbol{v}^h}\big){\rm d}\boldsymbol{x}
+\sum\limits_{e\in \partial K}\frac{1}{2}\int_{e}J_e
\cdot  \big(\overline{\boldsymbol v}-\overline{\boldsymbol{v}^h}\big){\rm ds} 
\right].
\end{eqnarray}
We take $\boldsymbol{v}^h=\Pi_h \boldsymbol{v}\in\boldsymbol V_{h, \partial D}$,
where $\Pi_h$ is the Scott--Zhang interpolation operator \cite{SZ_mc90}, which
has the following interpolation estimates
\begin{eqnarray*}
\|\boldsymbol{v}-\Pi_h
\boldsymbol{v}\|_{\boldsymbol{L}^2(K)}\lesssim h_K
\|\nabla\boldsymbol{v}\|_{\boldsymbol{L}^2(\tilde{K})},\quad
\|\boldsymbol{v}-\Pi_h\boldsymbol{v}\|_{\boldsymbol{L}^2(e)}\lesssim
h_e^{1/2}\|\boldsymbol{v}\|_{\boldsymbol{H}^{1}(\tilde{K}_e)}.
\end{eqnarray*}
Here $\tilde{K}$ and $\tilde{K}_e$ are the union of all the triangular elements
in $\mathcal M_h$, which have nonempty intersection with the element $K$
and the side $e$, respectively. Using the H\"{o}lder inequality in
\eqref{lemma_xi_3_s1}, we get
\begin{eqnarray*}
\left|b_N(\boldsymbol{u}_N^h, \boldsymbol{v}-\boldsymbol{v}^h)\right|
\lesssim \left(\sum\limits_{K\in\mathcal
M_h}\eta_K^2\right)^{1/2}\|\boldsymbol{v}\|_{ \boldsymbol{H}^1(\Omega)}.
\end{eqnarray*}
The proof is completed after combining Lemma \ref{lemma_xi_5}.
\end{proof}

In the following lemma, we estimate the last two terms in \eqref{LemmaXi_1}.
\begin{lemma}\label{lemma_xi_6}
For any $\delta>0$, there exists a positive constant $C(\delta)$ independent of
$N$ such that
\begin{equation*}
\Re\int_{\partial B_R}\mathscr{T}_N\boldsymbol{\xi}\cdot \overline{
\boldsymbol{\xi}}{\rm d}s\leq C(\delta)
\|\boldsymbol{\xi}\|^2_{\boldsymbol{L}^2(B_R\setminus B_{\hat R})}
+\bigg(\frac{R}{\hat R}\bigg)\delta\|\boldsymbol{\xi}\|^2_{\boldsymbol{H}^{1}(B_R\setminus
B_{\hat R})}.                                                   
\end{equation*}
\end{lemma}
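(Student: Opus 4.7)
The strategy is to expand $\boldsymbol{\xi}$ in a Fourier series on each circle $\partial B_\rho$, $\hat R\le\rho\le R$, writing $\boldsymbol{\xi}(\rho,\theta)=\sum_n\boldsymbol{\xi}_n(\rho)e^{{\rm i}n\theta}$. Since $\mathscr T_N$ acts diagonally in the angular Fourier basis through the $2\times 2$ matrices $M_n$ from Appendix A, Parseval's identity reduces the surface integral to
$$
\Re\int_{\partial B_R}\mathscr T_N\boldsymbol\xi\cdot\overline{\boldsymbol\xi}\,{\rm d}s=2\pi R\sum_{|n|\le N}\Re\bigl(M_n\boldsymbol\xi_n(R)\cdot\overline{\boldsymbol\xi_n(R)}\bigr).
$$
From the Hankel-function asymptotics already used in the proof of Lemma \ref{lemmaHankel} (in particular the bounds on $H_n^{(1)}(\kappa_j R)/H_n^{(1)}(\kappa_j\hat R)$ and $\alpha_{jn}$), the entries of $M_n$ admit a uniform bound $|M_n|\lesssim 1+|n|$, so the sum is controlled by $R\sum_{|n|\le N}(1+|n|)|\boldsymbol\xi_n(R)|^2$.

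Next I would convert $|\boldsymbol\xi_n(R)|^2$ into an annular integral by a trace-type identity. Starting from
$$
\partial_\rho\bigl(\rho|\boldsymbol\xi_n(\rho)|^2\bigr)=|\boldsymbol\xi_n(\rho)|^2+2\rho\,\Re\bigl(\overline{\boldsymbol\xi_n(\rho)}\,\partial_\rho\boldsymbol\xi_n(\rho)\bigr),
$$
integrating from $r\in[\hat R,R]$ up to $R$ and then averaging in $r$ yields, up to constants depending only on $R$ and $\hat R$,
$$
|\boldsymbol\xi_n(R)|^2\lesssim\int_{\hat R}^R|\boldsymbol\xi_n|^2\,{\rm d}\rho+\int_{\hat R}^R\rho\,|\boldsymbol\xi_n|\,|\partial_\rho\boldsymbol\xi_n|\,{\rm d}\rho.
$$
The crucial step is to multiply by $(1+|n|)$ and apply Young's inequality with parameter $\delta$: schematically
$$
(1+|n|)\rho\,|\boldsymbol\xi_n|\,|\partial_\rho\boldsymbol\xi_n|\le\frac{C}{\delta}\,\frac{n^2}{\rho^2}|\boldsymbol\xi_n|^2\,\rho+\delta\,|\partial_\rho\boldsymbol\xi_n|^2\,\rho,
$$
with an analogous splitting $(1+|n|)|\boldsymbol\xi_n|^2\le C(\delta)|\boldsymbol\xi_n|^2+\delta\,n^2\rho^{-2}|\boldsymbol\xi_n|^2$ for the $(1+|n|)$-weighted $L^2$ piece. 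Converting the unweighted radial integrals $\int d\rho$ into the area integral $\int\rho\,d\rho$ inserts the factor $1/\hat R$, while the overall $R$ in front of the Fourier sum gives the stated prefactor $R/\hat R$.

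Finally, summing over $|n|\le N$ and applying Parseval's identity, the $\delta$-free contribution assembles into $C(\delta)\|\boldsymbol\xi\|^2_{\boldsymbol L^2(B_R\setminus B_{\hat R})}$ and the $\delta$-contribution into $(R/\hat R)\delta\|\boldsymbol\xi\|^2_{\boldsymbol H^1(B_R\setminus B_{\hat R})}$ via the polar representation
$$
\|\boldsymbol\xi\|^2_{\boldsymbol H^1(B_R\setminus B_{\hat R})}=2\pi\sum_n\int_{\hat R}^R\Bigl(|\boldsymbol\xi_n|^2+|\partial_\rho\boldsymbol\xi_n|^2+\frac{n^2}{\rho^2}|\boldsymbol\xi_n|^2\Bigr)\rho\,{\rm d}\rho.
$$
The main obstacle I anticipate is establishing the \emph{uniform} entrywise bound $|M_n|\lesssim 1+|n|$ for all $n$, not merely in the large-$|n|$ regime, since the Hankel asymptotics degrade when $|n|$ is comparable to $\kappa_j R$; the fix is to isolate the finitely many intermediate modes and absorb them into the additive constant. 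The second delicate point is the bookkeeping of weights so that precisely the factor $R/\hat R$ (rather than a worse power) multiplies $\delta$ on the right, which forces the Young parameter to be tuned against both $\rho$ and $|n|$ simultaneously.
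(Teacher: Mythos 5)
There is a genuine gap, and it lies exactly where you flagged your ``second delicate point.'' After Parseval, your bound $|M_n|\lesssim 1+|n|$ reduces the boundary term to $R\sum_{|n|\le N}(1+|n|)|\boldsymbol\xi_n(R)|^2$, which is essentially $\|\boldsymbol\xi\|^2_{\boldsymbol H^{1/2}(\partial B_R)}$. You then need the Ehrling-type inequality $\|\boldsymbol\xi\|^2_{\boldsymbol H^{1/2}(\partial B_R)}\le C(\delta)\|\boldsymbol\xi\|^2_{\boldsymbol L^2}+\delta\|\boldsymbol\xi\|^2_{\boldsymbol H^1}$, and this is false: the trace map $H^1(B_R\setminus B_{\hat R})\to H^{1/2}(\partial B_R)$ is bounded but not compact, which is precisely the obstruction to such an estimate. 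Concretely, your weighted Young step cannot be tuned to work. Writing $(1+|n|)\int|\boldsymbol\xi_n||\boldsymbol\xi_n'|\le \tfrac{(1+|n|)^2}{4K}\int|\boldsymbol\xi_n|^2+K\int|\boldsymbol\xi_n'|^2$, you must take $K\le\delta$ to put $\delta$ on the radial-derivative term, and then the first term is $\gtrsim\delta^{-1}n^2\int|\boldsymbol\xi_n|^2$, which after summation in $n$ is $\delta^{-1}$ times the angular-derivative part of $\|\nabla\boldsymbol\xi\|^2_{\boldsymbol L^2}$ --- a full $H^1$ quantity with a \emph{large} constant, not a small one. Young's inequality cannot place the small parameter on both the $n^2|\boldsymbol\xi_n|^2$ piece and the $|\boldsymbol\xi_n'|^2$ piece simultaneously, so the $(1+|n|)$ weight can never be absorbed. (By contrast, the uniformity of $|M_n|\lesssim 1+|n|$ over intermediate modes, which you identified as the main obstacle, is harmless.)

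The missing idea is structural, not technical: the paper uses the fact, proved in \cite{LWWZ-ip16}, that the Hermitian part $\hat M_n=(M_n+M_n^*)/2$ is negative definite for all $|n|>N_0$, with $N_0$ independent of $N$. Since $\Re(M_n\boldsymbol\xi_n)\cdot\overline{\boldsymbol\xi_n}=(\hat M_n\boldsymbol\xi_n)\cdot\overline{\boldsymbol\xi_n}$, every mode with $\min(N_0,N)<|n|\le N$ contributes a non-positive amount and is simply discarded. What survives is a sum over the finitely many modes $|n|\le\min(N_0,N)$, on which $|M_n|$ is bounded by a constant depending only on $N_0$; this yields control by $C\|\boldsymbol\xi\|^2_{\boldsymbol L^2(\partial B_R)}$ --- an $L^2$ trace, with no $|n|$-weight. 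The $L^2$ trace \emph{is} amenable to your trace identity plus Young's inequality with parameter $\delta$ (the trace $H^1\to L^2(\partial B_R)$ is compact), and that step of your argument, together with the polar-coordinate representation of the $H^1$ norm and the $R/\hat R$ bookkeeping, matches the paper's conclusion. Without the sign property of $\hat M_n$, however, the lemma does not follow from operator-norm bounds on $M_n$ alone.
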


\begin{proof}
Using \eqref{Truncated_TBC}, we get from a simple calculation that
\[
\Re\int_{\partial B_R}\mathscr{T}_N\boldsymbol{\xi}\cdot
\overline{ \boldsymbol{\xi}}{\rm d}s
= 2\pi R\,\Re\sum\limits_{|n|\leq N
}\left(M_n\boldsymbol{\xi}_n\right)\cdot\overline{\boldsymbol{\xi}_n}.
\]
Denote $\hat{M}_n=(M_n+M_n^*)/2$. Then
$\Re\left(M_n\boldsymbol{\xi}_n\right)\cdot 
\overline{\boldsymbol{\xi}_n}=\big(\hat{M}_n\boldsymbol{\xi}
_n\big)\cdot\overline{\boldsymbol{\xi}_n}$. It is shown in \cite{LWWZ-ip16}
that $\hat{M}_n$ is negative definite for sufficiently large $|n|$, i.e., there
exists $N_0>0$ such that
$\big(\hat{M}_n\boldsymbol{\xi}_n\big)\cdot\overline{\boldsymbol{\xi}
_n}\leq 0$ for any $|n|>N_0$. Hence
\begin{equation}\label{lemma_xi_6-s1}
\Re\int_{\partial B_R}\mathscr{T}_N\boldsymbol{\xi}\cdot
\overline{ \boldsymbol{\xi}}{\rm d}s
=2\pi R\sum\limits_{|n|\leq \min(N_0,
N)}\big(\hat{M}_n\boldsymbol{\xi}_n\big)\cdot\overline{\boldsymbol{\xi}_n}
+2\pi R\sum\limits_{N\geq |n|>\min(N_0,
N)}\big(\hat{M}_n\boldsymbol{\xi}_n\big)\cdot\overline{\boldsymbol{\xi}_n}
\end{equation}
Here we define  
\[
\sum\limits_{N>|n|>\min(N_0, N)}\big(\hat{M}_n\boldsymbol{\xi}
_n\big)\cdot\overline{\boldsymbol{\xi}_n} =0, \quad N>N_0.
\]
Since the second part in \eqref{lemma_xi_6-s1} is non-positive, we only need to
estimate the first part which consists of finite terms. Moreover we have
\begin{eqnarray*}
 \Re\int_{\partial B_R}\mathscr{T}_N\boldsymbol{\xi}\cdot
\overline{ \boldsymbol{\xi}}{\rm d}s
&\leq& 2\pi R\sum\limits_{|n|\leq \min(N_0,
N)}\big(\hat{M}_n\boldsymbol{\xi}_n\big)\cdot\overline{\boldsymbol{\xi}_n}\\
&\leq & C \sum\limits_{|n|\leq \min(N_0, N)}|\boldsymbol{\xi}_n|^2\leq
C\|\boldsymbol{\xi}\|^2_{\boldsymbol{L}^2(\partial B_R)}.
\end{eqnarray*}

Consider the annulus 
\[
 B_R\setminus B_{\hat{R}}=\{(r, \theta): \hat{R}<r<R,\,0<\theta<2\pi\}. 
\]
For any $\delta>0$, it follows from Young's inequality that 
\begin{eqnarray*}
&& (R-\hat R)|u(R)|^2 = \int_{\hat R}^{R}
|u(r)|^2{\rm d}r+\int_{\hat R}^{R}\int_{r}^{R}\frac{\rm d}{{\rm d}t}|u(t)|^2{\rm
d}t{\rm d}r\\
&&\qquad
\leq \int_{\hat R}^{R}|u(r)|^2{\rm d}r+(R-\hat R)\int_{\hat R}^{R}
2|u(r)||u'(r)|{\rm d}r \\
&&\qquad =\int_{\hat R}^{R} |u(r)|^2{\rm d}r+(R-\hat
R)\int_{\hat R}^{R}2\frac{|u(r)|}{\sqrt{\delta}}\sqrt{
\delta}|u'(r)|{\rm d}r\\
&&\qquad \leq
\int_{\hat
R}^{R}|u(r)|^2{\rm d}r+\delta^{-1}(R-\hat R)\int_{\hat
R}^{R}|u(r)|^2 {\rm d}r +\delta(R-\hat R)\int_{\hat R}^{R}|u'(r)|^2{\rm d}r,
\end{eqnarray*}
which gives 
\[
|u(R)|^2\leq
\left[\delta^{-1}+(R-\hat R)^{-1}\right]\int_{\hat R}^{R}|u(r)|^2
+\delta\int_{\hat R}^{R}|u'(r)|^2{\rm d}r.
\]
On the other hand, we have 
\begin{eqnarray*}
\|\nabla u\|^2_{\boldsymbol{L}^2(B_R\setminus B_{\hat R})} &=&
2\pi\sum\limits_{n\in\mathbb{Z}}\int_{\hat R}^{R}\Big(
r|u_n'(r)|^2+\frac{n^2}{r}|u_n(r)|^2\Big){\rm d}r,\\
\|u\|_{L^{2}(B_R\setminus B_{\hat R})}^2  &=&
2\pi\sum\limits_{n\in\mathbb{Z}}\int_{\hat R}^{R}r|u_n(r)|^2{\rm d}r.	
\end{eqnarray*}
Using the above estimates, we have for any $u\in H^{1}(B_R\setminus B_{\hat R})$ that 
\begin{eqnarray*}
&& \|u\|^2_{L^2(\partial B_R)} = 2\pi
R\sum\limits_{n\in\mathbb{Z}}|u_n(R)|^2 \\
&&\,\leq 2\pi
R\left[\delta^{-1}+(R-\hat
R)^{-1}\right]\sum\limits_{n\in\mathbb{Z}}\int_{\hat R}^{R}|u_n(r)|^2
+2\pi
R\delta\sum\limits_{n\in\mathbb{Z}}\int_{\hat R}^{R}|u'(r)|^2{\rm d}r\\
&&\,\leq  2\pi \left[\delta^{-1}+(R-\hat R)^{-1}\right]
\bigg(\frac{R}{\hat R}\bigg)\sum\limits_{n\in\mathbb{Z}}\int_{\hat R}^{R}
r|u_n(r)|^2{\rm d}r +2\pi
\delta\bigg(\frac{R}{\hat R}\bigg)\sum\limits_{n\in\mathbb{Z}}\int_{\hat R}^{R} \Big(r|u'_n(r)|^2 +\frac{n^2}{r}|u_n(r)|^2\Big){\rm d}r\\
&&\,\leq 2\pi \left[\delta^{-1}+(R-\hat R)^{-1}\right]
\bigg(\frac{R}{\hat R}\bigg)\|u\|^2_{L^2(B_R\setminus B_{\hat R})}
+\delta\bigg(\frac{R}{\hat R}\bigg)\|\nabla u\|^2_{L^2(B_R\setminus B_{\hat R})}\\
&&\, \leq C(\delta) \|u\|^2_{L^2(B_R\setminus
B_{\hat R})}+\bigg(\frac{R}{\hat R}\bigg)\delta\|\nabla u\|^2_{L^2(B_R\setminus
B_{\hat R})}.	
\end{eqnarray*}
Therefore,
\begin{eqnarray*}
\Re\int_{\partial B_R}\mathscr{T}_N\boldsymbol{\xi}\cdot
\overline{ \boldsymbol{\xi}}{\rm d}s &\leq&
C\|\boldsymbol{\xi}\|^2_{\boldsymbol{L}^2(\partial B_R)}
\leq C(\delta) 
\|\boldsymbol{\xi}\|^2_{\boldsymbol{L}^2(B_R\setminus B_{\hat R})}
+\bigg(\frac{R}{\hat R}\bigg)\delta\int_{\Omega}|\nabla\boldsymbol{\xi}|            
{\rm d}\boldsymbol{x}\\
&\leq& C(\delta) \|\boldsymbol{\xi}\|^2_{\boldsymbol{L}^2(B_R\setminus B_{\hat R})} +\bigg(\frac{R}{\hat R}\bigg)\delta\|\boldsymbol{\xi}\|^2_{\boldsymbol{H}^{1}(B_R\setminus B_{\hat R})},
\end{eqnarray*}
which completes the proof. 
\end{proof}

To estimate the third term on the right hand side of \eqref{LemmaXi_1}, we
consider the dual problem  
\begin{equation}\label{dp}
b(\boldsymbol{v}, \boldsymbol{p})=\int_{\Omega}
\boldsymbol{v}\cdot\overline{\boldsymbol{\xi}}{\rm d}\boldsymbol{x},
\quad \forall \boldsymbol{v}\in \boldsymbol{H}^{1}_{\partial
D}(\Omega).
\end{equation}
It is easy to check that $\boldsymbol{p}$ is the solution of the following
boundary value problem
\begin{equation}\label{Dualproblem}
\begin{cases}
\mu\Delta
\boldsymbol{p}+(\lambda+\mu)\nabla
\nabla\cdot\boldsymbol{p}+\omega^2\boldsymbol{p} = -\boldsymbol{\xi}
\quad &\text{in }\Omega,\\
 \boldsymbol{p}=0 \quad &\text{on }\partial D,\\
 \mathscr B\boldsymbol{p}=\mathscr{T}^{*}\boldsymbol{p} \quad &
\text{on }\partial B_R,
\end{cases}
\end{equation}
where $\mathscr T^*$ is the adjoint operator to the DtN operator $\mathscr T$.
Letting $\boldsymbol v=\boldsymbol\xi$ in \eqref{dp}, we obtain
\begin{equation}\label{LemmaXi_3a}
\|\boldsymbol{\xi}\|^2_{\boldsymbol{L}^2(\Omega)}
=b(\boldsymbol{\xi}, \boldsymbol{p})+\int_{\partial
B_R}\left(\mathscr{T} -\mathscr{T}_N\right)\boldsymbol{\xi}\cdot\overline{
\boldsymbol{p}}{\rm d}s -\int_{\partial
B_R}\left(\mathscr{T}-\mathscr{T}_N\right)\boldsymbol{\xi}\cdot\overline{
\boldsymbol{p}}{\rm d}s.
\end{equation}	

To evaluate \eqref{LemmaXi_3a}, we need to explicitly solve system
\eqref{Dualproblem}, which is very complicate due to the coupling of the
compressional and shear wave components. We consider the Helmholtz
decomposition to the boundary value problem \eqref{Dualproblem}. Let
\[
\boldsymbol{\xi}=\nabla\xi_1+{\bf curl}\xi_2,
\]
where $\xi_j, j=1, 2$ has the Fourier series expansion 
\[
\xi_j(r, \theta)=\sum_{n\in\mathbb Z}\xi_{jn}(r) e^{{\rm i}n\theta},\quad
\hat{R}<r<R.
\]
Meanwhile, we assume that 
\begin{equation}\label{xi_pfc}
\boldsymbol{\xi}(r,\theta)=\sum\limits_{n\in\mathbb{Z}}\left(\xi_n^{r}(r)
\boldsymbol{e}_r +\xi_n^{\theta}(r)\boldsymbol{e}_{\theta}\right)e^{{\rm
i}n\theta}.
\end{equation}
Using the Fourier series expansions and the Helmholtz decomposition, we
get 
\begin{eqnarray*}
\boldsymbol{\xi} (r,\theta) &=&
\sum\limits_{n\in\mathbb{Z}}\left[\xi_n^{r}(r)\boldsymbol{e}_r
+\xi_n^{\theta}(r)\boldsymbol{e}_{\theta}\right]e^{{\rm
i}n\theta}=\nabla\xi_1+{\bf curl}\xi_2\\
&=& \sum\limits_{n\in\mathbb{Z}}
\left[\xi'_{1n}(r)\boldsymbol{e}_r+\frac{{\rm
i}n}{r}\xi_{1n}(r)\boldsymbol{e}_{\theta}
+\frac{{\rm i}n}{r}\xi_{2n}(r)\boldsymbol{e}_r-\xi'_{2n}(r)
\boldsymbol{e}_{\theta}\right]e^{{\rm i}n\theta}\\
&=&  \sum\limits_{n\in\mathbb{Z}}\left[
\Big(\xi'_{1n}(r)+\frac{{\rm i}n}{r}\xi_{2n}(r)\Big)\boldsymbol{e}_r
+\Big(\frac{{\rm
i}n}{r}\xi_{1n}(r)-\xi'_{2n}(r)\Big)\boldsymbol{e}_{\theta}\right] e^{{\rm
i}n\theta},
\end{eqnarray*}
which shows that the Fourier coefficients $\xi_{1n}, \xi_{2n}$ satisfy
\begin{eqnarray*}
\xi'_{1n}(r)+\frac{{\rm i}n}{r}\xi_{2n}(r)=\xi_n^{r}(r),\quad 
\frac{{\rm i}n}{r}\xi_{1n}(r)-\xi'_{2n}(r)
=\xi_n^{\theta}(r), \quad & r\in (\hat R, R).
\end{eqnarray*} 

\begin{lemma}\label{Lemma_Xi_System}
The Fourier coefficients $\xi_{jn}, j=1, 2$ satisfy the system
\begin{equation}\label{Dual_xi}
\begin{cases}
\xi'_{1n}(r)+\frac{{\rm i}n}{r}\xi_{2n}(r)=\xi_n^{r}(r), \quad & r\in(\hat R,
R),\\
\frac{{\rm i}n}{r}\xi_{1n}(r)-\xi'_{2n}(r)
=\xi_n^{\theta}(r), \qquad & r\in(\hat R, R),\\
\xi_{1n}(R)=0,\quad \xi_{2n}(R)=0,\quad & r=R,
\end{cases}
\end{equation} 
which has a unique solution given by 
\begin{eqnarray}
\xi_{1n}(r) &=& -\frac{1}{2}\int_{r}^{R}
\left[\left(\frac{r}{t}\right)^n
+\left(\frac{t}{r}\right)^n\right]\xi_n^r(t){\rm d}t
+\frac{{\rm i}}{2}\int_{r}^{R}\left[\left(\frac{r}{t}\right)^n
-\left(\frac{t}{r}\right)^n\right]\xi_n^{\theta}(t){\rm d}t,
\label{SoluXi_1}\\
\xi_{2n}(r) &=& \frac{{\rm i}}{2}\int_{r}^{R}
\left[\left(\frac{t}{r}\right)^n
-\left(\frac{r}{t}\right)^n\right]\xi_n^r(t){\rm d}t
-\frac{1}{2}\int_{r}^{R}\left[\left(\frac{r}{t}\right)^n
+\left(\frac{t}{r}\right)^n\right]\xi_n^{\theta}(t){\rm d}t.
\label{SoluXi_2}					
\end{eqnarray}
\end{lemma}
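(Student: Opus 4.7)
My plan is to treat \eqref{Dual_xi} as a two-point boundary value problem for a $2\times 2$ linear first-order ODE system in the unknowns $(\xi_{1n}, \xi_{2n})$, decouple it by a simple complex change of variables, and integrate each resulting scalar equation using an elementary power-function integrating factor. The first two equations were already derived in the excerpt by matching the Fourier--radial coefficients of $\boldsymbol\xi = \nabla\xi_1 + \mathbf{curl}\,\xi_2$; the boundary data $\xi_{1n}(R) = \xi_{2n}(R) = 0$ should be read as a gauge choice that removes the residual (harmonic-potential) non-uniqueness of the Helmholtz decomposition inside the annulus $B_R \setminus B_{\hat R}$. With the right-hand sides smooth and the coefficient $n/r$ bounded on $[\hat R, R]$, Picard's theorem immediately yields existence and uniqueness, so the content of the lemma is really the explicit representation.

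To decouple, I introduce $U_n = \xi_{1n} + \mathrm{i}\,\xi_{2n}$ and $V_n = \xi_{1n} - \mathrm{i}\,\xi_{2n}$. Substituting the first two equations of \eqref{Dual_xi} into $U_n'$ and $V_n'$ collapses the cross terms and gives the two scalar linear ODEs
\[
U_n' + \tfrac{n}{r}\,U_n = \xi_n^r - \mathrm{i}\,\xi_n^\theta, \qquad
V_n' - \tfrac{n}{r}\,V_n = \xi_n^r + \mathrm{i}\,\xi_n^\theta,
\]
whose integrating factors are $r^n$ and $r^{-n}$, respectively. Integrating from $r$ to $R$ and using $U_n(R)=V_n(R)=0$ yields closed-form representations
\[
U_n(r) = -\!\int_r^R \!\bigl(t/r\bigr)^n\!\bigl[\xi_n^r(t) - \mathrm{i}\,\xi_n^\theta(t)\bigr]\,\mathrm{d}t, \qquad
V_n(r) = -\!\int_r^R \!\bigl(r/t\bigr)^n\!\bigl[\xi_n^r(t) + \mathrm{i}\,\xi_n^\theta(t)\bigr]\,\mathrm{d}t.
\]
Finally, I recover $\xi_{1n} = (U_n + V_n)/2$ and $\xi_{2n} = (U_n - V_n)/(2\mathrm{i})$; collecting terms according to whether they multiply $\xi_n^r$ or $\xi_n^\theta$ produces the symmetric combinations $(r/t)^n \pm (t/r)^n$ that appear in \eqref{SoluXi_1}--\eqref{SoluXi_2}.

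There is no real analytic obstacle here: the only thing to watch is the bookkeeping of the complex signs and the factors of $\mathrm{i}$ when passing from $(U_n, V_n)$ back to $(\xi_{1n}, \xi_{2n})$, so that the $\xi_n^\theta$ contribution ends up with the correct sign relative to $\xi_n^r$ in each formula. After writing down the solution it is prudent to verify by direct differentiation that the formulas indeed satisfy both ODEs and both terminal conditions, which also doubles as a check on the signs.
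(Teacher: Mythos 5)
Your method is sound and, at bottom, coincides with the paper's: the paper writes \eqref{Dual_xi} as a first-order vector system with coefficient matrix $A_n(r)$ and builds the fundamental matrix $\Phi_n(r)=\exp\bigl(\int_{\hat R}^r A_n\bigr)$ by diagonalizing with the eigenvectors $\tfrac{1}{\sqrt2}(1,\pm{\rm i})^\top$ of $A_n$, which is exactly your substitution $U_n=\xi_{1n}+{\rm i}\xi_{2n}$, $V_n=\xi_{1n}-{\rm i}\xi_{2n}$; you simply decouple before invoking variation of parameters rather than after, which is cleaner. Your decoupled equations $U_n'+\tfrac{n}{r}U_n=\xi_n^r-{\rm i}\xi_n^\theta$ and $V_n'-\tfrac{n}{r}V_n=\xi_n^r+{\rm i}\xi_n^\theta$, the integrating factors, and the integrated forms of $U_n$, $V_n$ are all correct, and existence and uniqueness for the terminal-value problem are indeed immediate.

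One warning, precisely at the sign bookkeeping you flagged: carrying out the recombination $\xi_{1n}=(U_n+V_n)/2$, $\xi_{2n}=(U_n-V_n)/(2{\rm i})$ carefully, the $\xi_n^\theta$ terms come out with the sign \emph{opposite} to the one printed in \eqref{SoluXi_1}--\eqref{SoluXi_2}; for instance your route gives
\[
\xi_{1n}(r)=-\frac12\int_r^R\Bigl[\Bigl(\frac{r}{t}\Bigr)^n+\Bigl(\frac{t}{r}\Bigr)^n\Bigr]\xi_n^r(t)\,{\rm d}t
+\frac{{\rm i}}{2}\int_r^R\Bigl[\Bigl(\frac{t}{r}\Bigr)^n-\Bigl(\frac{r}{t}\Bigr)^n\Bigr]\xi_n^\theta(t)\,{\rm d}t.
\]
Do not adjust your signs to force agreement with the lemma: the direct-differentiation check you propose shows that the printed formulas satisfy the first equation of \eqref{Dual_xi} but yield $\frac{{\rm i}n}{r}\xi_{1n}-\xi_{2n}'=-\xi_n^\theta$ rather than $+\xi_n^\theta$, whereas your formulas satisfy both. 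The discrepancy originates in the paper's variation-of-parameters step, where the forcing vector is taken to be $(\xi_n^r,\xi_n^\theta)^\top$ although the second equation of \eqref{Dual_xi} reads $\xi_{2n}'=\frac{{\rm i}n}{r}\xi_{1n}-\xi_n^\theta$, so the forcing should be $(\xi_n^r,-\xi_n^\theta)^\top$. The slip is harmless downstream, since only the moduli $|\xi_{jn}|$ enter the bounds \eqref{AsymXi_1}--\eqref{AsymXi_2}, but your write-up should state the corrected formulas (or note the sign discrepancy) rather than assert agreement with \eqref{SoluXi_1}--\eqref{SoluXi_2} as printed.
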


\begin{proof}
Denote 
\[
A_n(r)=\begin{bmatrix} 0 & -\frac{{\rm i}n}{r}\\
\frac{{\rm i}n}{r} & 0\end{bmatrix}.
\]
By the standard theory of the first order differential system, the fundamental
solution $\Phi_n(r)$ is
\begin{eqnarray*}
\Phi_n(r)&=&e^{\int_{\hat R}^r A_n(\tau){\rm d}\tau}=\exp \left(
\begin{bmatrix}
0 & -{\rm i}n\ln\frac{r}{\hat R}\\
{\rm i}n\ln\frac{r}{\hat R} & 0
\end{bmatrix}\right)\\
&=& \begin{bmatrix}
\frac{1}{\sqrt{2}} & \frac{\rm i}{\sqrt{2}}\\
\frac{\rm i}{\sqrt{2}} & \frac{1}{\sqrt{2}}
\end{bmatrix}
\begin{bmatrix}
\left(\frac{r}{\hat R}\right)^n & 0\\
0 & \left(\frac{r}{\hat R}\right)^{-n}
\end{bmatrix}
\begin{bmatrix}
\frac{1}{\sqrt{2}} & -\frac{\rm i}{\sqrt{2}}\\
-\frac{\rm i}{\sqrt{2}} & \frac{1}{\sqrt{2}}
\end{bmatrix}.
\end{eqnarray*}
The inverse of $\Phi_n$ is
\begin{equation*}
\Phi_n^{-1}(r)=
\begin{bmatrix}
\frac{1}{\sqrt{2}} & \frac{\rm i}{\sqrt{2}}\\
\frac{\rm i}{\sqrt{2}} & \frac{1}{\sqrt{2}}
\end{bmatrix}
\begin{bmatrix}
\left(\frac{r}{\hat R}\right)^{-n} & 0\\
0 & \left(\frac{r}{\hat R}\right)^{n}
\end{bmatrix}
\begin{bmatrix}
\frac{1}{\sqrt{2}} & -\frac{\rm i}{\sqrt{2}}\\
-\frac{\rm i}{\sqrt{2}} & \frac{1}{\sqrt{2}}
\end{bmatrix}.
\end{equation*}

Using the method of variation of parameters, we let 
\[
(\xi_{1n}(r),\xi_{2n}(r))^\top=\Phi_n(r)C_n(r),
\]
where the unknown vector $C_n(r)$ satisfies 
\begin{eqnarray}\label{Lemma_Xi_System-s1}
C_n^\prime (r) &=& \Phi_n^{-1}(r)(\xi_n^r(r),
\xi_n^{\theta}(r))^\top\notag\\
&=&\frac{1}{2}
\begin{bmatrix}
\left[\left(\frac{r}{\hat R}\right)^{-n}+\left(\frac{r}{\hat R}\right)^n\right]
\xi_n^r(r)
+{\rm i}\left[\left(\frac{r}{\hat
R}\right)^{n}-\left(\frac{r}{\hat R}\right)^{-n} \right] \xi_n^{\theta}(r)\\[8pt]
{\rm
i}\left[\left(\frac{r}{\hat
R}\right)^{-n}-\left(\frac{r}{\hat R}\right)^n\right] \xi_n^r(r)
+\left[\left(\frac{r}{\hat
R}\right)^{-n}+\left(\frac{r}{\hat R}\right)^{n}\right] \xi_n^{\theta}(r)
\end{bmatrix}.
\end{eqnarray}
Using the boundary condition yields
\[
(\xi_{1n}(R), \xi_{2n}(R))^\top
=\Phi_n(R)C_n(R)=(0,0)^\top,
\]
which implies that $C_n(R)=(0, 0)^\top$. Then
\begin{equation}\label{Lemma_Xi_System-s2}
C_n(r)=-\int_{r}^{R}C_n^\prime (t){\rm d}t.
\end{equation}
Combining \eqref{Lemma_Xi_System-s1} and \eqref{Lemma_Xi_System-s2}, we have
\begin{equation*}
C_n(r)=-\frac{1}{2}\begin{bmatrix}
\int_{r}^{R}\left[\left(\frac{t}{\hat R}\right)^{-n}+\left(\frac{t}{\hat R}
\right)^{n}\right]\xi_n^{r}(t){\rm d}t
+{\rm i}\int_{r}^{R}\left[\left(\frac{t}{\hat
R}\right)^{n}-\left(\frac{t}{\hat R}
\right)^{-n}\right]\xi_n^{\theta}(t){\rm d}t\\[8pt]
{\rm i}\int_{r}^{R}\left[\left(\frac{t}{\hat
R}\right)^{-n}-\left(\frac{t}{\hat R} \right)^{n}\right]\xi_n^{r}(t){\rm d}t
+\int_{r}^{R}\left[\left(\frac{t}{\hat R}\right)^{n}+\left(\frac{t}{\hat R}
\right)^{-n}\right]\xi_n^{\theta}(t){\rm d}t.
\end{bmatrix}.
\end{equation*}
Substituting $C_n(r)$ into the general solution, we obtain 
\begin{eqnarray*}
\xi_{1n}(r) 
&=&-\frac{1}{2}\left(\frac{r}{\hat
R}\right)^n\int_{r}^{R}\left(\frac{t}{\hat R} \right)^{-n}\xi_n^{r}(t){\rm d}t
+\frac{\rm i}{2}\left(\frac{r}{\hat
R}\right)^n\int_{r}^{R}\left(\frac{t}{\hat R} \right)^{-n}
\xi_n^{\theta}(t){\rm d}t\\
&&\qquad
-\frac{1}{2}\left(\frac{r}{\hat
R}\right)^{-n}\int_{r}^{R}\left(\frac{t}{\hat R} \right)^{n}\xi_n^{r}(t){\rm d}t
-\frac{\rm i}{2}
\left(\frac{r}{\hat R}\right)^{-n}\int_{r}^{R}\left(\frac{t}{\hat
R}\right)^{n} \xi_n^{\theta}(t){\rm d}t\\
\xi_{2n}(r) 
&=&-\frac{\rm
i}{2}\left(\frac{r}{\hat R}\right)^n\int_{r}^{R}\left(\frac{t}{\hat R}
\right)^{-n} \xi_n^{r}(t){\rm d}t +\frac{\rm i}{2}
\left(\frac{r}{\hat R}\right)^{-n}\int_{r}^{R}\left(\frac{t}{\hat
R}\right)^{n} \xi_n^{r}(t){\rm d}t\\
&&\qquad
-\frac{1}{2}\left(\frac{r}{\hat R}\right)^{n}\int_{r}^{R}\left(\frac{t}{\hat
R} \right)^{-n}\xi_n^{\theta}(t){\rm d}t-\frac{1}{2}
\left(\frac{r}{\hat R}\right)^{-n}\int_{r}^{R}\left(\frac{t}{\hat
R}\right)^{n} \xi_n^{\theta}(t){\rm d}t,
\end{eqnarray*}		
which completes the proof.
\end{proof}

Let $\boldsymbol p$ be the solution of the dual problem \eqref{Dualproblem}.
Then $\boldsymbol p$ satisfies the following boundary value problem in
$B_R\setminus\overline B_{\hat{R}}$:
\begin{equation}\label{Dual_p}
\begin{cases}
\mu\Delta\boldsymbol{p}+(\lambda+\mu)\nabla\nabla\cdot\boldsymbol{p}
+\omega^2 \boldsymbol{p}=-\boldsymbol{\xi}
\quad & \text{in}\, B_R\setminus\overline{B_{\hat R}}, \\
 \boldsymbol{p}(\hat R,\theta)=\boldsymbol{p}(\hat R,
\theta) \quad & \text{on}\, \partial B_{\hat R},\\
\mathscr B\boldsymbol{p}=\mathscr{T}^{*}\boldsymbol{p} \quad
& \text{on}\,\partial B_R.	
\end{cases}
\end{equation}
Introduce the Helmholtz decomposition for $\boldsymbol p$:
\begin{equation}\label{hd_pq}
 \boldsymbol p=\nabla q_1+{\bf curl} q_2,
\end{equation}
where $q_j, j=1, 2$ admits the Fourier series expansion
\[
 q_j(r, \theta)=\sum\limits_{n\in\mathbb{Z}}q_{jn}(r)e^{{\rm
i}n\theta}.
\]

Let $\xi_{jn}, j=1, 2$ be the solution of the system \eqref{Dual_xi}. Consider
the second order system for $q_{jn}, j=1, 2$:
\begin{equation}\label{Dual_phi}
 \begin{cases}
  q''_{jn}(r)+\frac{1}{r}q'_{jn}(r)+\big(\kappa_j^2-\left(\frac{n}{R}
\right)^2\big)q_{jn}(r) =c_j\xi_{jn}(r),\quad &
r\in(\hat R, R),\\
q_{jn}(\hat R)=q_{jn}(\hat R),\quad & r=\hat R,\\
q'_{jn}(R)=\overline{\alpha_{jn}} q_{jn}(R),\quad & r=R,
 \end{cases}
\end{equation}
where $c_1=-1/(\lambda+2\mu), c_2=-1/\mu,$ and $\alpha_{jn}$ is given
in \eqref{alpha_jn}. The boundary
condition $q'_{jn}(R)=\overline{\alpha_{jn}} q_{jn}(R)$ comes
from \eqref{ptbc}, i.e., $q_j$ satisfies the boundary condition 
\[
 \partial_r q_j=\mathscr T_j^* q_j:=\sum_{n\in\mathbb
Z}\overline{\alpha_{jn}} q_{jn}(R)e^{{\rm i}n\theta}\quad\text{on}\,\partial
B_R,
\]
where $\mathscr T_j^*$ is the adjoint operator to the DtN operator $\mathscr
T_j$.

\begin{lemma}\label{Thm_Xi}
The boundary value problem \eqref{Dual_p} and the second order system
\eqref{Dual_phi} are equivalent under the Helmholtz decomposition \eqref{hd_pq}.
\end{lemma}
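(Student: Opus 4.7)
The plan is to prove both directions of the equivalence by exploiting the orthogonality of the $\nabla$ and $\mathbf{curl}$ parts of a Helmholtz decomposition, together with the commutation relations $\Delta\nabla=\nabla\Delta$ and $\Delta\,\mathbf{curl}=\mathbf{curl}\,\Delta$. The underlying idea is that the Navier operator diagonalizes on the two scalar potentials, producing two uncoupled scalar Helmholtz equations whose Fourier modes are precisely \eqref{Dual_phi}.

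First I would handle the interior equation. Substituting $\boldsymbol p=\nabla q_1+\mathbf{curl}\,q_2$ and writing $\boldsymbol\xi=\nabla\xi_1+\mathbf{curl}\,\xi_2$, a direct computation using $\nabla\cdot\mathbf{curl}=0$ and the above commutations rewrites the Navier equation in \eqref{Dual_p} as
\begin{equation*}
\nabla\bigl[(\lambda+2\mu)\Delta q_1+\omega^2 q_1+\xi_1\bigr]
+\mathbf{curl}\bigl[\mu\Delta q_2+\omega^2 q_2+\xi_2\bigr]=0.
\end{equation*}
Applying $\nabla\cdot$ and the scalar curl to this identity and using unique continuation to remove harmonic ambiguities separates the two brackets and produces $\Delta q_1+\kappa_1^2 q_1=c_1\xi_1$ and $\Delta q_2+\kappa_2^2 q_2=c_2\xi_2$ in $B_R\setminus\overline{B}_{\hat R}$. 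Expanding $q_j$ in a Fourier series in $\theta$ and writing the Laplacian in polar coordinates reduces each of these scalar Helmholtz equations mode by mode to the radial ODE appearing in \eqref{Dual_phi}.

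The boundary conditions split into two parts. On $\partial B_{\hat R}$, the prescribed Dirichlet data for $\boldsymbol p$ determine Dirichlet data for $q_{1n}$ and $q_{2n}$ via the same inversion of the Helmholtz decomposition used for $\boldsymbol\xi$ in Lemma \ref{Lemma_Xi_System}; this gives the inner condition in \eqref{Dual_phi}. On $\partial B_R$ the operator $\mathscr B$ combines $\mu\partial_r$ with $(\lambda+\mu)(\nabla\cdot\boldsymbol p)\boldsymbol e_r$, so on each Fourier mode it acts through an explicit $2\times 2$ matrix on the pair $\bigl(q_{1n}(R),q_{2n}(R)\bigr)$ and their radial derivatives, while $\mathscr T^{*}$ acts via the Hermitian transpose of the matrix $M_n$ from \eqref{tbc}. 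I expect the main obstacle to lie precisely here: one must verify that the identity $\mathscr B\boldsymbol p=\mathscr T^{*}\boldsymbol p$ diagonalizes in the basis of scalar potentials and collapses into the two scalar conditions $q'_{jn}(R)=\overline{\alpha_{jn}}q_{jn}(R)$. The expected mechanism is that, by construction of $M_n$ from the forward DtN analysis, its adjoint inherits the same eigenstructure with eigenvalues $\overline{\alpha_{jn}}$ along the compressional and shear modes, so the off-diagonal cross-terms cancel exactly.

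The converse direction then follows by running every step backwards: given $q_{jn}$ solving \eqref{Dual_phi}, reassemble $q_j$ by its Fourier series, set $\boldsymbol p=\nabla q_1+\mathbf{curl}\,q_2$, and verify that the interior Navier equation, the inner Dirichlet condition, and the outer TBC in \eqref{Dual_p} all follow because each has just been shown to be mode-wise equivalent to the corresponding condition in \eqref{Dual_phi}.
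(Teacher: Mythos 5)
Your overall strategy matches the paper's: expand in Fourier modes, use the Helmholtz decomposition to diagonalize the Navier operator into two scalar Helmholtz equations, and check the boundary conditions mode by mode. The interior equation and the inner boundary condition are handled essentially as in the paper (note, though, that the paper only proves the direction from \eqref{Dual_phi} to \eqref{Dual_p}, declaring that this suffices, and thereby avoids the issue you gloss over with ``unique continuation'': applying $\nabla\cdot$ and the scalar curl to $\nabla[\cdot]+\mathbf{curl}[\cdot]=0$ only shows the two brackets are \emph{harmonic} in the annulus, not zero, and unique continuation does not remove that ambiguity).

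The genuine gap is exactly where you flag ``the main obstacle'' and then substitute a heuristic for a proof. The identity $\mathscr B\boldsymbol p=\mathscr T^{*}\boldsymbol p$ does \emph{not} collapse to $q'_{jn}(R)=\overline{\alpha_{jn}}\,q_{jn}(R)$ merely because $M_n^{*}$ ``inherits the eigenstructure'' of the forward DtN matrix. The operator $\mathscr B$ applied to $\boldsymbol p=\nabla q_1+\mathbf{curl}\,q_2$ produces \emph{second} radial derivatives $q''_{jn}(R)$ (through $\mu\partial_r p_n^r$ and through $\nabla\cdot\boldsymbol p=\Delta q_1$), which cannot be expressed through the boundary relation $q'_{jn}(R)=\overline{\alpha_{jn}}q_{jn}(R)$ alone. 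The paper eliminates them by substituting the radial ODE from \eqref{Dual_phi} evaluated at $r=R$, and this substitution introduces the source terms $c_j\xi_{jn}(R)$. The residuals cancel only because $\xi_{1n}(R)=\xi_{2n}(R)=0$, i.e.\ the terminal condition in \eqref{Dual_xi} established in Lemma \ref{Lemma_Xi_System}. Your proposal never invokes this fact, and without it the computation leaves uncancelled terms proportional to $\xi_{jn}(R)$, so the claimed equivalence of the outer boundary conditions fails. In addition, the cancellation of the cross terms (the $\boldsymbol e_\theta$-component of the compressional part and the $\boldsymbol e_r$-component of the shear part) is a nontrivial computation with the explicit entries $\overline{M_{ij}^{(n)}}$ and $\Lambda_n(R)$; asserting it as an ``expected mechanism'' leaves the core of the lemma unproved.
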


\begin{proof}
It suffices to show if the Fourier coefficients $q_{jn}$ satisfy the second
order system \eqref{Dual_phi}, then $\boldsymbol p=\nabla q_1+{\bf curl}q_2$ is
the solution of \eqref{Dual_p}.

In the polar coordinates, we let 
\begin{equation}\label{Thm_Xi-s1}
\boldsymbol p(r, \theta)=\sum_{n\in\mathbb Z}(p_n^r(r)\boldsymbol e_r +
p_n^\theta(r)\boldsymbol e_\theta)e^{{\rm i}n\theta},\quad r\in(\hat R, R). 
\end{equation}
It follows from the Helmholtz decomposition that
\begin{equation}\label{Thm_Xi-s2}
p_n^{r}(r)=q'_{1n}(r)+\frac{{\rm i}n}{r}q_{2n}(r),\quad 
p_n^{\theta}(r)=\frac{{\rm i}n}{r}q_{1n}(r)-q'_{2n}(r).
\end{equation}
Using \eqref{Thm_Xi-s1}--\eqref{Thm_Xi-s2}, we have from a straightforward
calculation that
\begin{eqnarray*}
\mathscr B\boldsymbol{p} &=&
\big(\mu\partial_r\boldsymbol{p}+(\lambda+\mu)\nabla\cdot\boldsymbol{p}
\boldsymbol {e}_r\big) |_{r=R} \\
&=&\sum\limits_{n\in\mathbb{Z}}
\Big[(\lambda+2\mu)q''_{1n}(R)+(\lambda+\mu)\frac{1}{R}q'_{1n}(R)
-(\lambda+\mu)\frac{n^2}{R^2}q_{1n}(R)\Big]e^{{\rm
i}n\theta}\boldsymbol{e}_r\\
&&+\sum\limits_{n\in\mathbb{Z}}\Big[\mu\frac{{\rm
i}n}{R}q'_{1n}(R)-\mu\frac{{\rm i}n}{R^2}q_{1n}(R)\Big]
e^{{\rm i}n\theta}\boldsymbol{e}_{\theta}
+\sum\limits_{n\in\mathbb{Z}}\Big[\mu\frac{{\rm
i}n}{R}q'_{2n}(R)-\mu\frac{{\rm i}n}{R^2}q_{2n}(R)\Big]
e^{{\rm i}n\theta}\boldsymbol{e}_{r}\\
&&+\sum\limits_{n\in\mathbb{Z}}-\mu q''_{2n}(R)e^{{\rm
i}n\theta}\boldsymbol{e}_{\theta}.
\end{eqnarray*}
On the other hand, it is easy to verify that 
\begin{eqnarray*}
\mathscr{T}^{*}\boldsymbol{p} &=& \sum\limits_{n\in\mathbb{Z}}
\left\{\Big[\overline{M_{11}^{(n)}} p_n^r(R) +\overline{M_{21}^{(n)}}
p_n^{\theta}(R)\Big]\boldsymbol{e}_r +\Big[\overline{M_{12}^{(n)}} p_n^r(R)
+\overline{M_{22}^{(n)}} p_n^{\theta}(R)\Big]\boldsymbol{e}_{\theta}\right\}e^{{
\rm i}n\theta}\\
&=& \sum\limits_{n\in\mathbb{Z}}\left\{\overline{M_{11}^{(n)}}
\Big[q'_{1n}(R)+\frac{{\rm i}n}{R}q_{2n}(R)\Big]
+\overline{M_{21}^{(n)}} \Big[\frac{{\rm
i}n}{R}q_{1n}(R)-q'_{2n}(R)\Big]\right\}\boldsymbol{e}_r e^{{\rm
i}n\theta}\\
&&\quad +\sum\limits_{n\in\mathbb{Z}} \left\{\overline{M_{12}^{(n)}}
\Big[q'_{1n}(R)+\frac{{\rm i}n}{R}q_{2n}(R)\Big]
+\overline{M_{22}^{(n)}} \Big[\frac{{\rm
i}n}{R}q_{1n}(R)-q'_{2n}(R)\Big]\right\}\boldsymbol{e}_{\theta} e^{{\rm
i}n\theta}, 
\end{eqnarray*}
where $M_{ij}^{(n)}, i,j=1, 2$ are given in \eqref{Mn}.

Using the boundary condition $q'_{jn}(R)=\overline{\alpha_{jn}} q_{jn}(R)$, we
get 
\begin{eqnarray*}
&& \left(\mu\frac{{\rm
i}n}{R}-\overline{M_{12}^{(n)}}\right)q'_{1n}(R)-\left(\overline{M_{22}^{(n)}}
\frac{{\rm i}n}{R} +\mu\frac{{\rm i}n}{R^2}\right)q_{1n}(R)\\
&=& \left(\mu\frac{{\rm i}n}{R}-{\rm
i}n\frac{\mu}{R}+\omega^2\frac{{\rm i}n}{R}\frac{1}{\overline{\Lambda_n(R)}}
\right)q'_{1n}(R)
-\left(-\frac{\mu}{R}\frac{{\rm i}n}{R}+\omega^2\frac{{\rm
i}n}{R}\frac{\overline{\alpha_{1n}}}{\overline{\Lambda_n(R)}}
+\mu\frac{{\rm i}n}{R^2}\right)q_{1n}(R)\\
&=& \omega^2 \frac{{\rm
i}n}{R}\frac{1}{\overline{\Lambda_n(R)}}\left(q'_{1n}(R)-\overline{ \alpha_{1n}}
q_{1n}(R)\right)=0
\end{eqnarray*}
and 
\begin{eqnarray*}
&& \left(\mu\frac{{\rm
i}n}{R}+\overline{M_{21}^{(n)}}\right)q'_{2n}(R)-\left(\overline{M_{11}^{(n)}}
\frac{{\rm i}n}{R}+\mu\frac{{\rm i}n}{R^2}\right)q_{2n}(R)\\
&=& \left(\mu\frac{{\rm i}n}{R}-{\rm i}n\frac{\mu}{R}+{\rm
i}n\frac{\omega^2}{R}\frac{1}{\overline{\Lambda_n(R)}}\right)q'_{2n}(R)
-\left(\mu\frac{{\rm i}n}{R^2}-\frac{\mu}{R}\frac{{\rm
i}n}{R}+\omega^2\frac{{\rm
i}n}{R}\frac{\overline{\alpha_{2n}}}{\overline{\Lambda_n(R)}}
\right)q_{2n}(R)\\
&=&  {\rm i}n\frac{\omega^2}{R}\frac{1}{\overline{\Lambda_n(R)}}\left(
q'_{2n}(R)-\overline{\alpha_{2n}}q_{2n}(R)\right)=0.	
\end{eqnarray*}
Since $q_{2n}$ satisfies the second order equation
\[
q''_{2n}(r)+\frac{1}{r}q'_{2n}(r)+\left(\kappa_2^2-\left(\frac{n}{R}
\right)^2\right)q_{2n}(r)=-\frac{1}{\mu}\xi_{2n},\quad r\in (\hat R, R),
\]
we obtain from the boundary condition $\xi_{2n}(R)=0$ that 
\begin{eqnarray*}
&&-\mu q''_{2n}(R)-\left(\overline{M_{12}^{(n)}}\frac{{\rm
i}n}{R}q_{2n}(R)-\overline{M_{22}^{(n)}}q'_{2n}(R)\right)\\
&&\quad= -\mu q''_{2n}(R)-\frac{{\rm i}n}{R}\left({\rm
i}n\frac{\mu}{R} -\omega^2 \frac{{\rm
i}n}{R}\frac{1}{\overline{\Lambda_n(R)}}\right)q_{2n}(R)
+\left(-\frac{\mu}{R}+\omega^2
\frac{\overline{\alpha_{1n}}}{\overline{\Lambda_n(R)}}\right)q'_{2n}(R)\\
&&\quad= \xi_{2n}(R)+\mu\kappa_2^2 q_{2n}(R)
+\omega^2\left(\frac{{\rm i}n}{R}\right)^2\frac{1}{\overline{\Lambda_n(R)}}
q_{2n}(R) +\omega^2\frac{\overline{\alpha_{1n}}}{\overline{\Lambda_n(R)}}
q'_{2n}(R)\\
&&\quad=\xi_{2n}(R)+\frac{\omega^2}{\overline{\Lambda_n(R)}}\left(
\left(\frac{n}{R} \right)^2 q_{2n}(R)
-\overline{\alpha_{1n}}\overline{\alpha_{2n}} q_{2n}(R)
-\left(\frac{n}{R}\right)^2
q_{2n}(R)+\overline{\alpha_{1n}} q'_{2n}(R)\right)\\
&&\quad= \xi_{2n}(R)+\omega^2\frac{\overline{\alpha_{1n}}}{\overline{
\Lambda_n(R)}} \left(-\overline{\alpha_{2n}} q_{2n}(R)+q'_{2n}(R)\right)
=0.
\end{eqnarray*}
Similarly, combining the equation
\[
q''_{1n}(r)+\frac{1}{r}q'_{1n}(r)+\left(\kappa_1^2-\left(\frac{n}{r}
\right)^2\right)q_{1n}(r)=-\frac{1}{\lambda+2\mu}\xi_{1n}(r),\quad r\in(\hat R,
R)
\]
and the boundary condition $\xi_{1n}(R)=0$, we have 
\begin{eqnarray*}
&& (\lambda+2\mu)q''_{1n}(R)+(\lambda+\mu)\frac{1}{R}q'_{1n}(r)
-(\lambda+\mu)\frac{n^2}{R^2}q_{1n}(R)-\overline{M_{11}^{(n)}}q'_{1n}(R)
-\frac{{\rm i}n}{R}\overline{M_{21}^{(n)}}q_{1n}(R)\\
&=&(\lambda+2\mu)\left[-\frac{1}{\lambda+2\mu}\xi_{1n}(R)-\frac{1}{R}
q'_{1n}(R)-\left(\kappa_1^2
-\left(\frac{n}{R}\right)^2\right)q_{1n}(R)\right]\\
&&\qquad
+\left(\frac{\lambda+2\mu}{R}-\omega^2\frac{\overline{\alpha_{2n}}}{
\overline {\Lambda_n(R)}}\right)q'_{1n}(R)
+\left(-(\lambda+2\mu)\left(\frac{n}{R}\right)^2
+\omega^2\frac{1}{\overline{\Lambda_n(R)}}\left(\frac{n}{R}\right)^2\right)
q_{1n}(R)\\
&=& -\xi_{1n}(R)-\omega^2\frac{\overline{\alpha_{2n}}}{\overline{
\Lambda_n(R)} }q'_{1n}(R)+\left(-\omega^2
+\omega^2\frac{1}{\overline{\Lambda_n(R)}}\left(\frac{n}{R}\right)^2\right)
q_{1n}(R)\\
&=& -\xi_{1n}(R)-\frac{\omega^2}{\overline{\Lambda_n(R)}}\left(\overline{
\alpha_{2n}}q'_{1n}(R) +\left(\frac{n}{R}\right)^2 q_{1n}(R)
-\overline{\alpha_{1n}\alpha_{2n}} q_{1n}(R)-\left(\frac{n}{R}
\right)^2 q_{1n}(R)\right)\\
&=& -\xi_{1n}(R)
-\frac{\omega^2}{\overline{\Lambda_n(R)}}\overline{\alpha_{2n}} \left[
q'_{1n}(R) -\overline{\alpha_{1n}} q_{1n}(R)\right]=0.
\end{eqnarray*}
Hence we prove that $\mathscr B\boldsymbol{p}=\mathscr{T}^*\boldsymbol{p}$ on
$\partial B_R$. 

Moreover, we get from the Helmholtz decomposition that 
\begin{eqnarray*}
&&
\mu\Delta\boldsymbol{p}+(\lambda+\mu)\nabla\nabla\cdot\boldsymbol{p}+\omega^2
\boldsymbol{p} \\
&=& \nabla\left((\lambda+2\mu)\Delta q_1+\omega^2 q_1\right)
+{\bf curl}\left(\mu\Delta q_2+\omega^2 q_2\right)\\
&=& -\nabla\xi_1-{\bf curl}\xi_2=-\boldsymbol{\xi},
\end{eqnarray*}
which completes the proof.
\end{proof}

Based on Lemma \ref{Lemma_Xi_System} and Lemma \ref{Thm_Xi}, we have the
asymptotic properties of the solution to the dual problem 
\eqref{Dual_p} for large $|n|$. 

\begin{theorem}\label{Solu_p}
Let $\boldsymbol{p}$ be the solution of \eqref{Dual_p} and amdit the Fourier
series expansion
\[
\boldsymbol{p}(r,\theta)=\sum\limits_{n\in Z}
\left(p_n^r(r)\boldsymbol{e}_r
+p_n^{\theta}(r)\boldsymbol{e}_{\theta}\right)e^{{\rm
i}\,n\theta}.
\]
For sufficient large $|n|$, the Fourier coefficients $p_n^r, p_n^{\theta}$
satisfy the estimate
\begin{eqnarray*}
|p_n^r(R)|^2+|p_n^{\theta}(R)|^2 &\lesssim &
n^2\bigg(\frac{\hat R}{R}\bigg)^{2|n|+2}\left(|p_n^r(\hat R)|^2+|p_n^{\theta}
(\hat R)|^2\right)\\
&&+\frac{1}{|n|^2}\left(\|\xi_n^r\|_{L^{\infty}([\hat R,
R])}^2+\|\xi_n^{\theta}\|_{L^{\infty}([\hat R, R])}^2\right),
\end{eqnarray*}
where $\xi_n^r, \xi_n^\theta$ are the Fourier coefficients of
$\boldsymbol\xi$ in the polar coordinates and are given in \eqref{xi_pfc}. 
\end{theorem}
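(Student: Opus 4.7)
The plan is to combine the Helmholtz decomposition for $\boldsymbol p$ on $\partial B_R$ with the Robin boundary condition $q_{jn}'(R)=\overline{\alpha_{jn}}q_{jn}(R)$ and a Green's-function analysis of the ODE \eqref{Dual_phi}, treating the homogeneous contribution (driven by the boundary data at $\hat R$) and the particular contribution (driven by the source $\xi_{jn}$) separately.

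First, I would substitute the Robin condition into the identities $p_n^r(R)=q'_{1n}(R)+(\mathrm{i}n/R)q_{2n}(R)$ and $p_n^\theta(R)=(\mathrm{i}n/R)q_{1n}(R)-q'_{2n}(R)$ from \eqref{Thm_Xi-s2}, producing the matrix representation
\begin{equation*}
\begin{bmatrix}p_n^r(R)\\ p_n^\theta(R)\end{bmatrix}
=\begin{bmatrix}\overline{\alpha_{1n}} & \mathrm{i}n/R\\ \mathrm{i}n/R & -\overline{\alpha_{2n}}\end{bmatrix}
\begin{bmatrix}q_{1n}(R)\\ q_{2n}(R)\end{bmatrix}.
\end{equation*}
The asymptotic behavior $\alpha_{jn}(R)=O(|n|/R)$ coming from the Hankel function ratios (as exploited in Lemma \ref{lemmaHankel}) then gives $|p_n^r(R)|^2+|p_n^\theta(R)|^2 \lesssim (n^2/R^2)\bigl(|q_{1n}(R)|^2+|q_{2n}(R)|^2\bigr)$, so it suffices to bound $|q_{jn}(R)|$ in terms of the stated right-hand side.

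Next, I would analyze the two-point boundary value problem \eqref{Dual_phi}. For large $|n|$ the homogeneous operator is dominated by $q''_{jn}+r^{-1}q'_{jn}-(n/R)^2 q_{jn}\approx 0$, whose two linearly independent solutions behave like $r^{|n|}$ and $r^{-|n|}$. The Robin condition at $R$ with $\overline{\alpha_{jn}}\sim -|n|/R$ is consistent (to leading order) with the decaying mode $r^{-|n|}$, so the propagator from $\hat R$ to $R$ contracts boundary data by the factor $(\hat R/R)^{|n|}$, giving $|q_{jn}(R)|_{\mathrm{hom}}\lesssim (\hat R/R)^{|n|}|q_{jn}(\hat R)|$. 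For the particular solution, a Green's function built from these two modes has modulus controlled by $|n|^{-1}(\hat R/R)^{|n|}(s/\hat R)^{|n|}$, so that
\[
|q_{jn}(R)|_{\mathrm{part}}\lesssim \frac{1}{|n|}\int_{\hat R}^R \Bigl(\frac{\hat R}{R}\Bigr)^{|n|}\Bigl(\frac{s}{\hat R}\Bigr)^{|n|}|\xi_{jn}(s)|\,\mathrm ds.
\]
Inserting the explicit formulas \eqref{SoluXi_1}--\eqref{SoluXi_2}, from which one reads off the pointwise estimate $|\xi_{jn}(s)|\lesssim |n|^{-1}(R/s)^{|n|}\bigl(\|\xi_n^r\|_{L^\infty}+\|\xi_n^\theta\|_{L^\infty}\bigr)$, the opposite exponential factors cancel and what remains is precisely the $|n|^{-2}$ weight stated in the theorem.

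To close the argument I would relate $|q_{jn}(\hat R)|$ to $|p_n^r(\hat R)|^2+|p_n^\theta(\hat R)|^2$ by writing \eqref{Thm_Xi-s2} at $r=\hat R$ and inverting the first-order system coupling $(q_{1n},q_{2n})$ to $(p_n^r,p_n^\theta)$ (in the spirit of Lemma \ref{Lemma_Xi_System}, with the roles of data and unknowns exchanged). The bookkeeping of this step also supplies the extra $(\hat R/R)^2$ factor that distinguishes the $(\hat R/R)^{2|n|+2}$ in the statement from the naive $(\hat R/R)^{2|n|}$. The main obstacle will be making the asymptotic analysis of the Green's function and of $\alpha_{jn}$ precise enough that all algebraic $|n|$ powers balance to exactly $n^2$ on the homogeneous side and $|n|^{-2}$ on the source side, and checking that the sub-leading corrections from the $\kappa_j^2$ and $r^{-1}q'_{jn}$ terms in \eqref{Dual_phi}, as well as from the higher-order expansion of $\overline{\alpha_{jn}}$, do not spoil these cancellations.
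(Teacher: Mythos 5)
Your overall architecture matches the paper's: solve the radial problems \eqref{Dual_phi} explicitly (the paper uses the Hankel basis $H_n^{(1)},H_n^{(2)}$, with $\beta_{jn}(r)=H_n^{(2)}(\kappa_j r)/H_n^{(2)}(\kappa_j\hat R)\sim(\hat R/r)^{|n|}$ playing the role of your decaying propagator and the Wronskian $W_{jn}$ playing the role of your Green's function), convert between $(q_{1n},q_{2n})$ and $(p_n^r,p_n^\theta)$ by $2\times2$ matrices at $r=R$ and $r=\hat R$, and feed in the bound $|\xi_{jn}(t)|\lesssim|n|^{-1}(R/t)^{|n|}\left(\|\xi_n^r\|_{L^\infty}+\|\xi_n^\theta\|_{L^\infty}\right)$ from \eqref{SoluXi_1}--\eqref{SoluXi_2}. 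Your treatment of the source term is essentially the paper's and does produce the $|n|^{-2}$ weight after the exponentials cancel.

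The gap is in the homogeneous part, and you have in effect named it yourself without closing it. Chaining your three separate bounds --- $|p_n(R)|\lesssim|n|\,|q_n(R)|$ from the matrix at $R$, $|q_n(R)|\lesssim(\hat R/R)^{|n|}|q_n(\hat R)|$ from the propagator, and $|q_n(\hat R)|\lesssim|n|\,|p_n(\hat R)|$ from inverting $K_n(\hat R)$ (whose determinant $\overline{\Lambda_n(\hat R)}=O(1)$ by Lemma \ref{Lamn} while its entries are $O(|n|)$) --- yields $n^{4}(\hat R/R)^{2|n|}$ for the squared quantity, not the claimed $n^{2}(\hat R/R)^{2|n|+2}$. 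The missing factor of $|n|$ in amplitude is not recoverable by sharpening each step individually: it comes from a cancellation that only appears in the \emph{product} $U_n(R)V_n(\hat R)/\overline{\Lambda_n(\hat R)}$. Concretely, the $(1,1)$ entry contains $-\overline{\alpha_{1n}(R)\alpha_{2n}(\hat R)}\,\beta_{1n}(R)+\frac{n^2}{R\hat R}\beta_{2n}(R)$; the two $O(n^2)$ leading terms cancel only because the $\kappa_1$- and $\kappa_2$-propagators agree to order $|n|^{-1}(\hat R/R)^{|n|}$, which is exactly the content of Lemma \ref{lemmaHankel} (the same cancellation used for $A_{11}$ in Lemma \ref{lemma_xi_5}). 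Without invoking that lemma and estimating the composed matrix as a whole, your route proves only the weaker $n^4$ bound. (That weaker bound would, incidentally, still suffice for the subsequent lemma, since the exponential factor absorbs any polynomial in $|n|$ there, but it does not prove the theorem as stated.) A secondary bookkeeping point: the inversion at $r=\hat R$ must be applied to $p_n(\hat R)-\eta_n$, not to $p_n(\hat R)$, because the particular solution also contributes to $q_{jn}'(\hat R)$ through $\eta_{jn}=\pm\hat R^{-1}\int_{\hat R}^R t\,\beta_{jn}(t)\xi_{jn}(t)\,{\rm d}t$; these terms are harmless ($|\eta_{jn}|\lesssim|n|^{-2}(R/\hat R)^{|n|}$ times the $L^\infty$ norms) but your proposal omits them.
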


\begin{proof}
It follows from straighforward calculations that the second order systems \eqref{Dual_phi} have a unique solution, which is given by 
\begin{eqnarray}
q_{1n}(r) &=& \beta_{1n}(r)q_{1n}(\hat R)+\frac{{\rm
i}\pi}{4}\int_{\hat R}^{r}tW_{1n}(r,t)
\xi_{1n}(t){\rm d}t \notag\\
&&\qquad +\frac{{\rm i}\pi}{4}\int_{\hat R}^{R}t
\beta_{1n}(t)W_{1n}(\hat R, r)\xi_{1n}(t){\rm d}t \label{Dual_Phi},\\
q_{2n}(r) &=& \beta_{2n}(r)q_{2n}(\hat R)+\frac{{\rm
i}\pi}{4}\int_{\hat R}^{r}t W_{2n}(r,t)
\xi_{2n}(t){\rm d}t \notag\\
&&\qquad+\frac{{\rm i}\pi}{4}\int_{\hat R}^{R}t
\beta_{2n}(t)W_{2n}(\hat R, r)\xi_{2n}(t){\rm d}t \label{Dual_Psi},
\end{eqnarray}		
where
\[
\beta_{jn}(r)=\frac{H_n^{(2)}(\kappa_j r)}{H_n^{(2)}(\kappa_j
\hat R)},\quad W_{jn}(r,t)=H_n^{(1)}(\kappa_j r)H_n^{(2)}(\kappa_j
t)-H_n^{(1)}(\kappa_j t)H_n^{(2)}(\kappa_j r).
\]
Taking the derivative of \eqref{Dual_Phi}--\eqref{Dual_Psi} respective to
$r$ gives
\begin{eqnarray}\label{Dual_Dphi}
q'_{1n}(r) &=& \beta'_{1n}(r)q_{1n}(\hat R)+\frac{{\rm
i}\pi}{4}\int_{\hat R}^{r}t\partial_r W_{1n}(r,t)\xi_{1n}(t){\rm d}t
\notag\\
&&\qquad+\frac{{\rm i}\pi}{4}\int_{\hat R}^{R} t
\beta_{1n}(t)\partial_t W_{1n}(\hat R, r)\xi_{1n}(t){\rm d}t,
\end{eqnarray}
\begin{eqnarray}\label{Dual_Dpsi}
q'_{2n}(r) &=& \beta'_{2n}(r)q_{2n}(\hat R)+\frac{{\rm
i}\pi}{4}\int_{\hat R}^{r}t\partial_r
W_{2n}(r,t)\xi_{2n}(t){\rm d}t\notag\\
&&\qquad+\frac{{\rm i}\pi}{4}\int_{\hat R}^{R} t
\beta_{2n}(t)\partial_t W_{2n}(\hat R, r)\xi_{2n}(t){\rm d}t.
\end{eqnarray}
Evaluating \eqref{Dual_Phi}--\eqref{Dual_Psi} and
\eqref{Dual_Dphi}--\eqref{Dual_Dpsi} at $r=R$ and $r=\hat R$, respectively, we
may verify that 
\begin{eqnarray*}
q_{1n}(R) &=& \beta_{1n}(R)q_{1n}(\hat R)+\frac{{\rm
i}\pi}{4}\int_{\hat R}^{R}t \beta_{1n}(R)  W_{1n}(\hat R,
t)\xi_{1n}(t){\rm d}t,\\
q_{2n}(R) &=& \beta_{2n}(R)q_{2n}(\hat R)+\frac{{\rm
i}\pi}{4}\int_{\hat R}^{R}t \beta_{2n}(R)  W_{2n}(\hat R,
t)\xi_{2n}(t){\rm d}t,\\
q'_{1n}(\hat R) &=&
\beta'_{1n}(\hat R) q_{1n}(\hat R)+\frac{1}{\hat R}\int_{\hat R}^{R} t
\beta_{1n}(t) \xi_{1n}(t){\rm d}t,\\
q'_{2n}(\hat R) &=&
\beta'_{2n}(\hat R) q_{2n}(\hat R)+\frac{1}{\hat R}\int_{\hat R}^{R} t
\beta_{2n}(t) \xi_{2n}(t){\rm d}t.
\end{eqnarray*}

It follows from the Helmholtz decomposition that  
\begin{equation}\label{Solu_p-s1}
p_n^{r}(r)=q'_{1n}(r)+\frac{{\rm i}n}{r}q_{2n}(r), \quad
p_n^{\theta}(r)=\frac{{\rm i}n}{r}q_{1n}(r)-q'_{2n}(r).
\end{equation}	
Evaluating \eqref{Solu_p-s1} at $r=R$, noting
$\beta'_{jn}(R)=\overline{\alpha_{jn}(R)}$ and
$q'_{jn}(R)=\overline{\alpha_{jn}(R)} q_{jn}(R)$, we obtain 
\begin{equation}\label{Solu_p-s2}
\begin{bmatrix}
p_n^{r}(R) \\[2pt]
p_n^{\theta}(R)
\end{bmatrix}=U_n(R)
\begin{bmatrix}
q_{1n}(\hat R) \\ 
q_{2n}(\hat R)
\end{bmatrix}+\frac{{\rm i}\pi}{4}U_n(R)
\begin{bmatrix}
\int_{\hat R}^{R} t W_{1n}(\hat R, t)\xi_{1n}(t){\rm d}t\\[5pt]
\int_{\hat R}^{R} t W_{2n}(\hat R, t)\xi_{2n}(t){\rm d}t
\end{bmatrix},
\end{equation}
where 
\[
U_n(R)=\begin{bmatrix}
\overline{\alpha_{1n}(R)} & \frac{{\rm i}n}{R}\\[5pt]
\frac{{\rm i}n}{R} & -\overline{\alpha_{2n}(R)}
\end{bmatrix}
\begin{bmatrix}
\beta_{1n}(R) & 0 \\[2pt]
0 & \beta_{2n}(R)
\end{bmatrix}.
\]

Similarly, evaluating \eqref{Solu_p-s1} at $r=\hat R$ and noting
$\beta'_{jn}(\hat R)=\overline{\alpha_{jn}(\hat R)}$ yield that 
\begin{equation}\label{Solu_p-s3}
\begin{bmatrix}
p_n^{r}(\hat R) \\[2pt]
p_n^{\theta}(\hat R)
\end{bmatrix}=K_n(\hat R)
\begin{bmatrix}
q_{1n}(\hat R) \\ 
q_{2n}(\hat R)
\end{bmatrix}+
\begin{bmatrix}
\eta_1\\
\eta_2
\end{bmatrix},
\end{equation}
where 
\[
K_n(\hat R)=\begin{bmatrix}
\overline{\alpha_{1n}(\hat R)} & \frac{{\rm i}n}{\hat R}\\[5pt]
\frac{{\rm i}n}{\hat R} & -\overline{\alpha_{2n}(\hat R)} \end{bmatrix},
\]
and
\[
\eta_{1n}=\frac{1}{\hat R}\int_{\hat R}^{R} t
\beta_{1n}(t)\xi_{1n}(t){\rm d}t,\quad 
\eta_{2n}=-\frac{1}{\hat R}\int_{\hat R}^{R} t
\beta_{2n}(t)\xi_{2n}(t){\rm d}t.
\]
Solving \eqref{Solu_p-s3} for $q_{1n}(\hat R), q_{2n}(\hat R)$ in terms of
$p_n^r(\hat R), p_n^{r}(\hat R)$ gives
\begin{equation}\label{Solu_p-s4}
\begin{bmatrix}
q_{1n}(\hat R) \\ 
q_{2n}(\hat R)
\end{bmatrix}=\frac{V_n(\hat R)}{\overline{\Lambda_n(\hat R)}}
\begin{bmatrix}
p_n^{r}(\hat R)-\eta_{1n}\\ 
p_n^{\theta}(\hat R)-\eta_{2n}
\end{bmatrix},
\end{equation} 
where
\[
 \Lambda_n(\hat R)=\left(\frac{n} {\hat R}\right)^2-\alpha_{1n}(\hat
R)\alpha_{2n}(\hat R),\quad V_n(\hat R)=\begin{bmatrix}
-\overline{\alpha_{2n}(\hat R)} & -\frac{{\rm i}n}{\hat R}\\[5pt]
-\frac{{\rm i}n}{\hat R} & \overline{\alpha_{1n}(\hat R)} \end{bmatrix}.
\]
Substituting \eqref{Solu_p-s4} into \eqref{Solu_p-s2} yields\\
\begin{equation}\label{pp_pp}
\begin{bmatrix}
p_n^{r}(R) \\[2pt]
p_n^{\theta}(R)
\end{bmatrix}=\frac{U_n(R) V_n(\hat R)}{\overline{\Lambda_n(\hat R)}}
\begin{bmatrix}
p_n^{r}(\hat{R}) \\[2pt]
p_n^{\theta}(\hat{R})
\end{bmatrix}+\frac{{\rm i}\pi}{4}U_n(R)
\begin{bmatrix}
\int_{\hat R}^{R} t W_{1n}(\hat R, t) \xi_{1n}(t){\rm d}t\\[2pt]
\int_{\hat R}^{R} t W_{2n}(\hat R, t) \xi_{2n}(t){\rm d}t
\end{bmatrix}-\frac{U_n(R)V_n(\hat R)}{\overline{\Lambda_n(\hat R)}}
\begin{bmatrix}
\eta_{1n} \\
\eta_{2n}
\end{bmatrix}.
\end{equation}
\\
Following proofs in Lemmas \ref{lemma_xi_5} and \ref{Lamn}, we may similarly
show that for sufficiently large $|n|$
\[
\left|\frac{U_n(R) V_n(\hat R)}{\overline{\Lambda_n(\hat R)}}\right|\lesssim
|n|\bigg(\frac{\hat R}{R}\bigg)^{|n|}.
\]\\
For fixed $t$ and sufficiently large $|n|$, using \eqref{SoluXi_1} and
\eqref{SoluXi_2}, we may easily show 
\begin{eqnarray}
|\xi_{1n}(t)| &\lesssim& \left(\|\xi_n^r\|_{L^{\infty}([\hat R,
R])}+\|\xi_n^{\theta}\|_{L^{\infty}([\hat R, R])}\right)
\int_{t}^{R}\left(\frac{r}{t}\right)^{|n|}{\rm d}r,
\label{AsymXi_1}\\
|\xi_{2n}(t)|&\lesssim& \left(\|\xi_n^r\|_{L^{\infty}([\hat R,
R])}+\|\xi_n^{\theta}\|_{L^{\infty}([\hat R, R])}\right)
\int_{t}^{R}\left(\frac{r}{t}\right)^{|n|}{\rm d}r \label{AsymXi_2}.
\end{eqnarray}		
By \eqref{AsymXi_1}--\eqref{AsymXi_2} and 
\[
W_{jn}(\hat R, t) \sim
-\frac{2{\rm i}}{\pi |n|}\left[\bigg(\frac{t}{\hat R}\bigg)^{|n|}
-\bigg(\frac{\hat R}{t}\bigg)^{|n|}\right],\quad 
\beta_{jn}(t) \sim \bigg(\frac{\hat R}{t}\bigg)^{|n|},
\]
we get
\begin{eqnarray*}
\left|\int_{\hat R}^{R} t W_{jn}(\hat R,
t)\xi_{jn}(t){\rm d}t\right| &\lesssim&
\left(\|\xi_n^r\|_{L^{\infty}([\hat R,
R])}+\|\xi_n^{\theta}\|_{L^{\infty}([\hat R, R])}\right)\frac{1}{|n|}
\int_{\hat
R}^{R}t\left(\frac{t}{\hat R}\right)^{|n|}\int_{t}^{R}\left(\frac{r}{t}
\right)^{|n|}{\rm d}r{\rm d}t\\
&=&
\left(\|\xi_n^r\|_{L^{\infty}([\hat R,
R])}+\|\xi_n^{\theta}\|_{L^{\infty}([\hat R, R])}\right)\frac{1}{|n|(|n|+1)}\\
&&\qquad\times\int_{\hat
R}^{R}t\left(\frac{1}{\hat R}\right)^{|n|}\left(R^{|n|+1}-t^{|n|+1}\right){\rm d}t\\
&\lesssim & \left(\|\xi_n^r\|_{L^{\infty}([\hat R,
R])}+\|\xi_n^{\theta}\|_{L^{\infty}([\hat R, R])}\right)
\frac{1}{|n|^2}\bigg(\frac{R}{\hat R}\bigg)^{|n|}
\end{eqnarray*}	
and
\begin{eqnarray*}
\left|\frac{1}{\hat R}\int_{\hat R}^{R} t
\beta_{jn}(t)\xi_{jn}(t){\rm d}t\right| &\lesssim&
\left(\|\xi_n^r\|_{L^{\infty}([\hat R,
R])}+\|\xi_n^{\theta}\|_{L^{\infty}([\hat R, R])}\right)
\int_{\hat R}^{R} t
\bigg(\frac{\hat R}{t}\bigg)^{|n|}\int_{t}^{R}\left(\frac{r}{t}\right)^{|n|}
{\rm d}r{\rm d}t\\
&=&
\left(\|\xi_n^r\|_{L^{\infty}([\hat R,
R])}+\|\xi_n^{\theta}\|_{L^{\infty}([\hat R, R])}\right)\frac{1}{1+|n|}\\
&&\qquad \times
\int_{\hat R}^{R} 
\bigg(\frac{\hat R}{t}\bigg)^{|n|}\frac{R^{|n|+1}-t^{|n|+1}}{t^{|n|-1}}
{\rm d}t\\
&\lesssim &\left(\|\xi_n^r\|_{L^{\infty}([\hat R,
R])}+\|\xi_n^{\theta}\|_{L^{\infty}([\hat R, R])}\right)
\frac{1}{|n|^2}\bigg(\frac{R}{\hat R}\bigg)^{|n|}.
\end{eqnarray*}
Substituting the above estimates into \eqref{pp_pp}, we obtain 
\begin{eqnarray*}
|p_n^r(R)|^2+|p_n^{\theta}(R)|^2 &\lesssim &
n^2\bigg(\frac{\hat R}{R}\bigg)^{2|n|+2}\left(|p_n^r(\hat R)|^2+|p_n^{\theta}
(\hat R)|^2\right)\\
&&+\frac{1}{|n|^2}\left(\|\xi_n^r\|_{L^{\infty}([\hat R,
R])}^2+\|\xi_n^{\theta}\|_{L^{\infty}([\hat R, R])}^2\right),
\end{eqnarray*}
which completes the proof. 
\end{proof}

Using Theorem \ref{Solu_p}, we may estimate the last term in
\eqref{LemmaXi_3a}.

\begin{lemma}
 Let $\boldsymbol p$ be the solution of the dual problem \eqref{Dual_p}. For
sufficiently large $N$, the following estimate holds
 \[
\left|\int_{\partial B_R}\left(\mathscr{T}-\mathscr{T}_N\right)\boldsymbol{\xi}
\cdot\overline{ \boldsymbol{p}}{\rm d}s\right|\lesssim
\frac{1}{N}\|\boldsymbol{\xi}\|^2_{\boldsymbol{H}^{1}(\Omega)}. 
 \]
\end{lemma}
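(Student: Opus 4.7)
The plan is to expand the boundary integral via Parseval on $\partial B_R$ so that only the tail modes $|n|>N$ survive, then pair the tail of $\boldsymbol{\xi}$ with the tail of $\boldsymbol{p}$ using the asymptotic bound on $\boldsymbol{p}_n(R)$ provided by Theorem \ref{Solu_p}. Specifically, writing $\boldsymbol{\xi}(R,\theta)=\sum_n\boldsymbol{\xi}_n(R)e^{{\rm i}n\theta}$ and $\boldsymbol{p}(R,\theta)=\sum_n\boldsymbol{p}_n(R)e^{{\rm i}n\theta}$, the orthogonality of $\{e^{{\rm i}n\theta}\}$ yields
\[
\int_{\partial B_R}(\mathscr{T}-\mathscr{T}_N)\boldsymbol{\xi}\cdot\overline{\boldsymbol{p}}\,{\rm d}s
= 2\pi R\sum_{|n|>N} M_n\boldsymbol{\xi}_n(R)\cdot\overline{\boldsymbol{p}_n(R)}.
\]
Since the entries of $M_n$ grow like $|n|$ (as recorded in Appendix A), I only need to estimate $\sum_{|n|>N}|n|\,|\boldsymbol{\xi}_n(R)|\,|\boldsymbol{p}_n(R)|$.

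The next step is to invoke Theorem \ref{Solu_p}, which splits $|\boldsymbol{p}_n(R)|$ into an exponentially small part coming from $|\boldsymbol{p}_n(\hat R)|$ and a polynomially small part of the form $|n|^{-1}(\|\xi_n^r\|_{L^\infty}+\|\xi_n^\theta\|_{L^\infty})$. The first part, weighted by $|n|(\hat R/R)^{|n|+1}$, is annihilated by the exponential decay and contributes at worst $e^{-cN}\|\boldsymbol{p}\|_{H^1(\Omega)}\lesssim e^{-cN}\|\boldsymbol{\xi}\|_{H^1(\Omega)}$ after invoking the dual-problem bound $\|\boldsymbol{p}\|_{H^1(\Omega)}\lesssim\|\boldsymbol{\xi}\|_{L^2(\Omega)}$ together with the trace inequality from Lemma \ref{Poincare}. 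This part is therefore negligible. The main contribution is
\[
\sum_{|n|>N}|\boldsymbol{\xi}_n(R)|\bigl(\|\xi_n^r\|_{L^\infty([\hat R,R])}+\|\xi_n^\theta\|_{L^\infty([\hat R,R])}\bigr),
\]
to which I will apply Cauchy--Schwarz after inserting the weights $|n|^{1/2}$ and $|n|^{-1/2}$ asymmetrically so that the tail condition $|n|>N$ gives the desired factor of $1/N$.

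The heart of the argument will be to control the two resulting Cauchy--Schwarz factors. On the $\boldsymbol{\xi}$ side, the trace estimate from Lemma \ref{Poincare} and $\|\boldsymbol{\xi}\|_{H^{1/2}(\partial B_R)}^2\simeq\sum_n(1+n^2)^{1/2}|\boldsymbol{\xi}_n(R)|^2$ give
\[
\sum_{|n|>N}|\boldsymbol{\xi}_n(R)|^2\le\frac{1}{N}\sum_{|n|>N}|n|\,|\boldsymbol{\xi}_n(R)|^2\lesssim\frac{1}{N}\|\boldsymbol{\xi}\|_{H^1(\Omega)}^2,
\]
which supplies one factor of $N^{-1/2}$. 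On the $\boldsymbol{p}$ side, a one-dimensional Sobolev embedding $\|u\|_{L^\infty([\hat R,R])}^2\lesssim\|u\|_{H^1([\hat R,R])}^2$ applied to each $\xi_n^r,\xi_n^\theta$, combined with the polar-coordinate identity
\[
\|\boldsymbol{\xi}\|_{H^1(B_R\setminus\overline{B_{\hat R}})}^2\gtrsim\sum_{n\in\mathbb{Z}}\int_{\hat R}^R\Bigl(|{\xi}_n'|^2+\tfrac{n^2}{r^2}|{\xi}_n|^2+|{\xi}_n|^2\Bigr)r\,{\rm d}r,
\]
yields $\sum_n(\|\xi_n^r\|_{L^\infty}^2+\|\xi_n^\theta\|_{L^\infty}^2)\lesssim\|\boldsymbol{\xi}\|_{H^1(\Omega)}^2$; restricting to $|n|>N$ and exploiting the weight $n^2/N^2\ge 1$ to absorb a further $1/N$ where needed, this contributes the second factor of $N^{-1/2}$.

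The main obstacle I anticipate is the careful juggling of weights in the Cauchy--Schwarz inequality so that the factor $1/N$ (rather than $1/\sqrt{N}$) really emerges; the delicate point is that the Fourier coefficients $\xi_n^r,\xi_n^\theta$ in the annulus must be linked simultaneously to the boundary traces $\boldsymbol{\xi}_n(R)$ and to the global $H^1(\Omega)$ norm, without losing the angular weight $n^2$. Once this is handled, combining the two $N^{-1/2}$ gains with the negligible exponentially small term and the estimate on $U_n(R)V_n(\hat R)/\overline{\Lambda_n(\hat R)}$ (which, as in Lemma \ref{lemma_xi_5} and Lemma \ref{Lamn}, is of order $|n|(\hat R/R)^{|n|}$) completes the bound $\lesssim N^{-1}\|\boldsymbol{\xi}\|_{H^1(\Omega)}^2$.
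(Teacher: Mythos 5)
Your overall strategy coincides with the paper's: expand the boundary term in Fourier modes, use $|M_n|\lesssim |n|$, feed in the bound on $|p_n^r(R)|^2+|p_n^\theta(R)|^2$ from Theorem \ref{Solu_p}, dispose of the exponentially small part via the stability of the dual problem and the trace inequality, and finish with a weighted Cauchy--Schwarz on $\sum_{|n|>N}|\boldsymbol{\xi}_n(R)|\big(\|\xi_n^r\|_{L^\infty}+\|\xi_n^\theta\|_{L^\infty}\big)$. However, there is a genuine gap in the step that is supposed to produce the second factor of $N^{-1/2}$. The plain one-dimensional embedding $\|u\|_{L^\infty([\hat R,R])}^2\lesssim \|u\|_{L^2}^2+\|u'\|_{L^2}^2$, summed over $|n|>N$, only gives $\sum_{|n|>N}\|\xi_n\|_{L^\infty}^2\lesssim \|\boldsymbol{\xi}\|_{H^1(\Omega)}^2$ with no gain in $N$: your device of inserting $n^2/N^2\ge 1$ works for the zero-order term $\|\xi_n\|_{L^2}^2$ (which pairs with the angular part $\sum_n n^2\int|\xi_n|^2 r^{-1}{\rm d}r$ of $\|\nabla\boldsymbol{\xi}\|_{L^2}^2$), but it fails for the radial-derivative term, since $\|\boldsymbol{\xi}\|_{H^1(\Omega)}^2$ controls only $\sum_n\|\xi_n'\|_{L^2}^2$ without any extra power of $n$ available to trade for $1/N$. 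As written, your argument therefore yields $N^{-1/2}\|\boldsymbol{\xi}\|_{H^1(\Omega)}^2$ rather than the claimed $N^{-1}\|\boldsymbol{\xi}\|_{H^1(\Omega)}^2$; you cannot recover the loss on the trace side either, because $\sum_{|n|>N}|\boldsymbol{\xi}_n(R)|^2$ gains at most one factor of $N^{-1}$ from the $H^{1/2}(\partial B_R)$ norm.

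The missing ingredient is the $|n|$-weighted interpolation inequality that the paper derives from the identity $|\xi_n(t)|^2=\frac{1}{R-t}\int_R^t\big((R-s)|\xi_n(s)|^2\big)'{\rm d}s$ followed by Young's inequality with parameter $|n|$, namely
\[
\|\xi_n^{(r,\theta)}\|^2_{L^{\infty}([\hat R, R])}\le\Big(\frac{2}{\zeta}+|n|\Big)\|\xi_n^{(r,\theta)}\|^2_{L^2([\hat R, R])}+|n|^{-1}\|\xi_n^{(r,\theta)\prime}\|^2_{L^2([\hat R, R])},\qquad \zeta=R-\hat R.
\]
With this, the derivative term carries an explicit $|n|^{-1}\le N^{-1}$ and the zero-order term carries $|n|\le n^2/N$, so $\sum_{|n|>N}\|\xi_n\|_{L^\infty}^2\lesssim N^{-1}\|\boldsymbol{\xi}\|_{H^1(\Omega)}^2$ and the two factors of $N^{-1/2}$ combine to the full $N^{-1}$. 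If you replace your plain Sobolev embedding by this sharpened version, the rest of your outline goes through and matches the paper's proof.
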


\begin{proof}
Using the definitions of the DtN operators $\mathscr T$ and  $\mathscr T_N$ and
Lemma \ref{Poincare}, we have 
\begin{eqnarray}
&& \left|\int_{\partial
B_R}\left(\mathscr{T}-\mathscr{T}_N\right)\boldsymbol{\xi}\cdot\overline{
\boldsymbol{p}}{\rm d}s \right|
\leq 2\pi R\sum\limits_{|n|>N} \left|\left(M_n
\boldsymbol{\xi}_n(R)\right)\cdot \overline{\boldsymbol{p}}_n(R)\right| \notag\\
&&\lesssim 2\pi R \sum\limits_{|n|>N} |n|
\left(|\xi_n^r(R)|+|\xi_n^{\theta}(R)|\right)
\left(|p_n^r(R)|+|p_n^{\theta}(R)|\right)	\notag \\
&& \lesssim \sum\limits_{|n|>N}\left((1+n^2)^{1/2}|n|\right)^{-1/2}
\left[\sum\limits_{|n|>N}(1+n^2)^{1/2}\left(|\xi_n^r(R)|+|\xi_n^{\theta}
(R)|\right)^2\right]^{1/2}	\notag\\
&& \qquad	\times\left[\sum\limits_{|n|>N} |n|^3
\left(|p_n^r(R)|+|p_n^{\theta}(R)|\right)^2\right]^{1/2}\notag \\
&& \lesssim N^{-1} \|\boldsymbol{\xi}\|_{H^{1/2}(\partial B_R)}	
\left[\sum\limits_{|n|>N} |n|^3
\left(|p_n^r(R)|^2+|p_n^{\theta}(R)|^2\right)\right]^{1/2}\notag\\
&& \lesssim N^{-1} \|\boldsymbol{\xi}\|_{H^{1}(\Omega)}	
\left[\sum\limits_{|n|>N} |n|^3
\left(|p_n^r(R)|^2+|p_n^{\theta}(R)|^2\right)\right]^{1/2}\label{P_2}.
\end{eqnarray}

Following \cite{JLLZ-jsc17}, we let $t\in [\hat R, R]$ and assume, without loss
of generality, that $t$ is closer to the left endpoint $\hat R$
than the right endpoint $R$. Denote $\zeta=R-\hat R$. Then we have $R-t\geq
\frac{\zeta}{2}$. Thus
\begin{eqnarray*}
|\xi_n^{(r, \theta)}(t)|^2 &=&
\frac{1}{R-t}\int_{R}^{t}\left((R-s)|\xi_n^{(r, \theta)}(s)|^2\right)' {\rm
d}s\\
&=& \frac{1}{R-t}\int_{R}^{t}\left(-|\xi_n^{(r, \theta)}(s)|^2+2\left(R-s\right)
\Re\big(\xi_n^{(r, \theta)\prime} (s)
\overline{\xi_n^{(r, \theta)}(s)}\big)\right){\rm d}s \\
&\leq&  \frac{1}{R-t}\int_{t}^{R}|\xi_n^{(r, \theta)}(s)|^2{\rm d}s
+2\int_{\hat R}^{R}  |\xi_n^{(r, \theta)}(s)||\xi_n^{(r, \theta)\prime}(s)|{\rm
d}s,
\end{eqnarray*}
which implies that
\begin{eqnarray*}
\|\xi_n^{(r, \theta)}\|^2_{L^{\infty}([\hat R, R])} &\leq&
\frac{2}{\zeta}\|\xi_n^{(r, \theta)}\|^2_{L^{2}([\hat R, R])}
+2\|\xi^{(r, \theta)}_{n}\|_{L^2([\hat R,
R])}\|\xi^{(r, \theta)\prime}_{n}\|_{L^2([\hat R, R])}\\
&\leq& \left(\frac{2}{\zeta}+|n|\right)\|\xi_n^{(r, \theta)}\|^2_{L^2([\hat R,
R])} +|n|^{-1}\|\xi_n^{(r, \theta)\prime}\|^2_{L^2([\hat R, R])}.
\end{eqnarray*}

Using Lemma \ref{Solu_p} and the Cauchy--Schwarz inequality, we get 
\begin{eqnarray*}
&& \sum\limits_{|n|>N} |n|^3
\left(|p_n^r(R)|^2+|p_n^{\theta}(R)|^2\right)\\
&& \lesssim \sum\limits_{|n|>N} |n|^3
\left\{n^2\bigg(\frac{\hat
R}{R}\bigg)^{2|n|+2}\left(|p_n^r(\hat R)|^2+|p_n^{\theta} (\hat R)|^2\right)
+\frac{1}{|n|^2}\left(\|\xi_n^r\|_{L^{\infty}([\hat R,
R])}^2+\|\xi_n^{\theta}\|_{L^{\infty}([\hat R, R])}^2\right)\right\}\\
&&\lesssim\sum\limits_{|n|>N} |n|^5
\bigg(\frac{\hat R}{R}\bigg)^{|2n|}\left(|p_n^r(\hat R)|^2+|p_n^{\theta}
(\hat R)|^2\right)
+\sum\limits_{|n|>N} |n|\left(\|\xi_n^{r}\|_{L^{\infty}([\hat R,
R])}^2+\|\xi_n^{\theta}\|_{L^{\infty}([\hat R, R])}^2 \right)\\
&& := I_1+I_2.	
\end{eqnarray*}
Noting that the function $t^{4} e^{-2t}$ is bounded on $(0, +\infty)$, we have
\[
I_1\lesssim
\max\limits_{|n|>N}\left(n^{4}\bigg(\frac{\hat R}{R}\bigg)^{|2n|}\right)
\sum\limits_{|n|>N}
|n|\left(|p_n^r(\hat R)|^2+|p_n^{\theta}(\hat R)|^2\right)
\lesssim \|\boldsymbol{p}\|_{H^{1/2}(\partial B_{\hat R})}^2
\lesssim \|\boldsymbol{\xi}\|_{H^{1}(\Omega)}^2,
\]
where the last inequality uses the stability of the dual problem \eqref{Dual_p}.
For $I_2$, we can show that
\begin{eqnarray*}
I_2 &\lesssim& \sum\limits_{|n|>N}
\left[|n|\left(\frac{2}{\zeta}+|n|\right)\left(\|\xi_n^r\|_{L^{2}([\hat R,
R])}^2 +\|\xi_n^{\theta}\|_{L^{2}([\hat R, R])}^2 \right)+
\left(\|\xi_n^{r\prime}\|_{L^{2}([\hat R, R])}^2
+\|\xi_n^{\theta\prime}\|_{L^{2}([\hat R, R])}^2
\right)\right]\\
&\leq& \sum\limits_{|n|>N}
\left[\left(\frac{2}{\zeta}|n|+n^2\right)\|\boldsymbol{\xi}_n\|_{L^{2}([\hat
R , R])}^2 + \|\boldsymbol{\xi}_n^\prime\|_{L^{2}([\hat R, R])}^2\right].
\end{eqnarray*}

On the other hand, a simple calculation yields
\begin{eqnarray*}
\|\xi_n^{(r, \theta)}\|^2_{H^{1}(B_R\setminus B_{\hat R})} &=& 2\pi
\sum\limits_{n\in\mathbb{Z}} \int_{\hat R}^{R}
\left[\left(r+\frac{n^2}{r}\right)|\xi_n^{(r,
\theta)}(r)|^2+r|\xi_n^{(r, \theta)\prime} (r)|^2\right]{\rm d}r \\
&\geq & 2\pi \sum\limits_{n\in\mathbb{Z}}\int_{\hat R}^{R} 
\left[\left(\hat R+\frac{n^2}{R}\right)|\xi_n^{(r, \theta)}(r)|^2
+\hat R |\xi_n^{(r, \theta)\prime}(r)|^2\right]{\rm d}r.
\end{eqnarray*}
It is easy to note that
\[
\frac{2}{\zeta}|n|+n^2\lesssim \hat R+\frac{n^2}{R}.
\]
Combining the above estimates, we obtain 
\[
I_2\lesssim  \|\boldsymbol{\xi}\|^2_{H^{1}(B_R\setminus B_{\hat{R}})}\leq
\|\boldsymbol{\xi}\|^2_{H^{1}(\Omega)},
\]
which gives 
\[
\sum\limits_{|n|>N}|n|^3
\left(|p_n^{r}(R)|+|p_n^{\theta}(R)|\right)^2\lesssim
\|\boldsymbol{\xi}\|^2_{H^{1}(\Omega)}.
\]
Substituting the above inequality into \eqref{P_2}, we get
\begin{equation}\label{4thTerm}
\left|\int_{\partial
B_R}\left(\mathscr{T}-\mathscr{T}_N\right)\boldsymbol{\xi}\cdot\overline{
\boldsymbol{p}}{\rm d}s \right|\lesssim
\frac{1}{N}\|\boldsymbol{\xi}\|^2_{\boldsymbol{H}^{1}(\Omega)},
\end{equation}
which completes the proof. 
\end{proof}

Now, we prove the main result of this paper.

\begin{proof}
By Lemma \ref{LemmaXi_1}, Lemma \ref{lemma_xi_3}, and 
Lemma \ref{lemma_xi_6},  we obtain 
\begin{eqnarray*}
\vvvert\boldsymbol{\xi}\vvvert^2_{\boldsymbol{H}^{1}(\Omega)} &=& \Re
b(\boldsymbol{\xi}, \boldsymbol{\xi})+\Re\int_{\partial B_R}
\left(\mathscr{T}-\mathscr{T}_N\right)\boldsymbol{\xi}\cdot\overline{\boldsymbol
{\xi}}{\rm d}s +2\omega^2 \int_{\Omega}
\boldsymbol{\xi}\cdot\overline{\boldsymbol{\xi}}{\rm d}\boldsymbol{x}
+\Re\int_{\partial B_R}
\mathscr{T}_N\boldsymbol{\xi}\cdot\overline{\boldsymbol{\xi}}{\rm d}s\\	
&\leq& C_1\left[\left(\sum\limits_{K\in\mathcal M_h} \eta^2_K\right)^{1/2}
+\max_{|n|>N}|n|\bigg(\frac{\hat R}{R}\bigg)^{|n|}
\|\boldsymbol{u}^{\rm inc}\|_{\boldsymbol{H}^{1}(\Omega)}\right] 	
\|\boldsymbol{\xi}\|_{\boldsymbol H^{1}(\Omega)}\\
&&\qquad +\left(C_2+C(\delta)\right)\|\boldsymbol{\xi}\|^2_{\boldsymbol{L}
^2(\Omega)} +\bigg(\frac{R}{
\hat R}\bigg)\delta\|\boldsymbol{\xi}\|^2_{\boldsymbol H^{1}(\Omega)}.	
\end{eqnarray*}		
Using \eqref{Equal_norm} and choosing $\delta$ such that
$\frac{R}{\hat R}\frac{\delta}{\min(\mu, \omega^2)}<\frac{1}{2}$, we get
\begin{eqnarray}\label{Estimate_H1}
\vvvert\boldsymbol{\xi}\vvvert^2_{\boldsymbol{H}^{1}(\Omega)} &\leq& 
2C_1\left[\left(\sum\limits_{K\in\mathcal M_h} \eta^2_{K}\right)^{1/2}
+\max_{|n|>N}|n|\bigg(\frac{\hat R}{R}\bigg)^{|n|}\|\boldsymbol{u}^{\rm
inc}\|_{\boldsymbol{H}^{1}(\Omega)}\right] 
\|\boldsymbol{\xi}\|_{\boldsymbol H^{1}(\Omega)}
\notag\\
&& +2\left(C_2+C(\delta)\right)\|\boldsymbol{
\xi}\|^2_{\boldsymbol{L} ^2(\Omega)}.
\end{eqnarray}
It follows from \eqref{LemmaXi_3a}, \eqref{4thTerm}, and \eqref{Equal_norm}
that we have 
\begin{eqnarray*}
\|\boldsymbol{\xi}\|^{2}_{\boldsymbol{L}^2(\Omega)}
&=& b(\boldsymbol{\xi}, \boldsymbol{p})
+\int_{\partial
B_R}\left(\mathscr{T}-\mathscr{T}_N\right)\boldsymbol{\xi}\cdot\overline{
\boldsymbol{p}}{\rm d}s
-\int_{\partial
B_R}\left(\mathscr{T}-\mathscr{T}_N\right)\boldsymbol{\xi}\cdot\overline{
\boldsymbol{p}}{\rm d}s\\
&\lesssim& \left[\left(\sum\limits_{K\in\mathcal M_h}
\eta^2_{K}\right)^{1/2}
+\max_{|n|>N}|n|\bigg(\frac{\hat R}{R}\bigg)^{|n|}
\|\boldsymbol{u}^{\rm inc}\|_{\boldsymbol{H}^{1}(\Omega)}\right] 	
\|\boldsymbol{\xi}\|_{H^{1}(\Omega)}+\frac{1}{N}
\|\boldsymbol{\xi}\|^2_{H^{1}(\Omega)}.
\end{eqnarray*}
Substituting the above estimate intro \eqref{Estimate_H1} and taking
sufficiently large $N$ such
that 
\[
\frac{2\left(C_2+C(\delta)\right)}{N}\frac{1}{\min(\mu, \omega^2)}<1,
\]
we obtain 
\begin{eqnarray*}
\vvvert\boldsymbol{u}-\boldsymbol{u}_N^h
\vvvert^2_{\boldsymbol{H}^{1}(\Omega)}\lesssim 
\left(\sum\limits_{K\in\mathcal M_h} \eta^2_{K}\right)^{1/2}
+\max_{|n|>N}\left(|n|\bigg(\frac{\hat R}{R}\bigg)^{|n|}\right)
\|\boldsymbol{u}^{\rm inc}\|_{\boldsymbol{H}^{1}(\Omega)}.
\end{eqnarray*}
which completes the proof of theorem.
\end{proof}

\section{Implementation and numerical experiments}\label{Section_ne}

In this section, we discuss the algorithmic implementation of the adaptive
finite element DtN method and present two numerical examples to demonstrate the
effectiveness of the proposed method. 

\begin{table}
\caption{The adaptive finite element DtN method for the elastic wave scattering
problem.}
\hrule \hrule
\vspace{0.8ex}
\begin{enumerate}		
\item Given the tolerance $\epsilon>0, \theta\in(0,1)$;
\item Fix the computational domain $\Omega=B_R\setminus \overline{D}$ by
choosing the radius $R$;
\item Choose $R^\prime$ and $N$ such that $\epsilon_N\leq 10^{-8}$;
\item Construct an initial triangulation $\mathcal M_h$ over $\Omega$ and
compute error estimators;
\item While $\epsilon_h>\epsilon$ do
\item \qquad Refine the mesh $\mathcal M_h$ according to the strategy:
\[
\text{if } \eta_{\hat{T}}>\theta \max\limits_{T\in \mathcal M_h}
\eta_{T}, \text{ then refine the element } \hat{T}\in M_h;
\]
\item \qquad Denote refined mesh still by $\mathcal M_h$, solve the discrete
problem \eqref{Variational_3} on the new mesh $\mathcal M_h$;
\item \qquad Compute the corresponding error estimators;
\item End while.
\end{enumerate}
\vspace{0.8ex}
\hrule\hrule
\end{table}

\subsection{Adaptive algorithm}

Based on the a posteriori error estimate from Theorem \ref{Main_Result},
we use the FreeFem \cite{H-jnm12} to implement the adaptive algorithm of
the linear finite element formulation. It is shown in Theorem \ref{Main_Result}
that the a posteriori error estimator consists two parts: the finite element
discretization error $\epsilon_h$ and the DtN truncation error $\epsilon_N$
which dependents on the truncation number $N$. Explicitly
\begin{equation}\label{epsilonN}
\epsilon_h = \left(\sum\limits_{T\in\mathcal M_h}
\eta^2_{T}\right)^{1/2}, \quad 
\epsilon_N =\max_{|n|\geq N}\left(|n|\bigg(\frac{\hat{R}}{R}\bigg)^{|n|}\right)
\|\boldsymbol{u}^{\rm inc}\|_{\boldsymbol{H}^{1}(\Omega)}. 
\end{equation}

In the implementation, we choose $\hat{R}, R$, and $N$ based on \eqref{epsilonN} to
make sure that the finite element discretization 
error is not polluted by the DtN truncation error, i.e., $\epsilon_N$ is
required to be very small compared to $\epsilon_h$, for example, 
$\epsilon_N\leq 10^{-8}$. For simplicity, in the following numerical
experiments, $\hat R$ is chosen such that the obstacle is exactly contained
in the disk $B_{\hat R}$, and $N$ is taken to be the smallest positive integer
such that $\epsilon_N\leq 10^{-8}$. The algorithm is shown in Table 1
for the adaptive finite element DtN method for solving the elastic wave
scattering problem. 

\subsection{Numerical experiments}

We report two examples to demonstrate the performance of the proposed method.
The first example is a disk and has an analytical solution; the second example
is a U-shaped obstacle which is commonly used to test numerical solutions 
for the wave scattering problems. In each example, we plot the magnitude of the
numerical solution to give an intuition where the mesh should be refined, and
also plot the actual mesh obtained by our algorithm to show the agreement. The a
posteriori error is plotted against the number of nodal points to show the
convergence rate. In the first example, we compare the numerical results by
using the uniform and adaptive meshes to illustrate the effectiveness of the
adaptive algorithm. 

{\em Example 1.} This example is constructed such that it has an exact solution.
Let the obstacle $D=B_{0.5}$ be a disk with radius 0.5 and take
$\Omega=B_{1}\setminus \overline{B}_{0.5}$, i.e., $\hat{R}=0.5, R=1$. If we
choose the incident wave as
\[
\boldsymbol{u}^{\rm inc}(\boldsymbol x) 
= - \frac{\kappa_1 H_0^{(1)'}(\kappa_1 r)}{r}\begin{pmatrix}
x\\y \end{pmatrix}
-\frac{\kappa_2 H_0^{(1)'}(\kappa_2
r)}{r}\begin{pmatrix} y\\-x \end{pmatrix},\quad r=(x^2+y^2)^{1/2},
\]
then it is easy to check that the exact solution is
\begin{equation*}\label{Example1_exact}
\boldsymbol{u}(\boldsymbol x) = \frac{\kappa_1 H_0^{(1)'}(\kappa_1
r)}{r}\begin{pmatrix} x\\y \end{pmatrix}
+\frac{\kappa_2 H_0^{(1)'}(\kappa_2
r)}{r}\begin{pmatrix} y\\-x \end{pmatrix},
\end{equation*}
where $\kappa_1$ and $\kappa_2$ are the compressional wave number and
shear wave number, respectively. 

In Table 2, numerical results are shown for the adaptive mesh
refinement and the
uniform mesh refinement, where $\text{DoF}_h$ stands for the degree of freedom
or the number of nodal points of the mesh $\mathcal M_h$, $\epsilon_h$ is the a
posteriori error estimate, and
$e_h=\|\boldsymbol{u}-\boldsymbol{u}_N^h\|_{\boldsymbol{H}^1(\Omega)}$ is the a
priori error. It can be seen that the adaptive mesh refinement requires fewer
$\text{DoF}_h$ than the uniform mesh refinement to reach the same level of
accuracy, which shows the advantage of using the adaptive mesh refinement.
Figure \ref{fig2} displays the curves of $\log e_h$ and $\log \epsilon_h$
versus $\log \text{DoF}_h$ for the uniform and adaptive mesh refinements with
$\omega=\pi, \lambda=2, \mu=1$, i.e., $\kappa_1=\pi/2, \kappa_2=\pi$. It
indicates that the meshes and the associated
numerical complexity are quasi-optimal,
i.e., $\|\boldsymbol{u}-\boldsymbol{u}_N^h\|_{\boldsymbol{H}^1(\Omega)}
=O\big(\text{DoF}_h^{-1/2}\big)$ holds asymptotically. Figure \ref{fig3}
plots the magnitude of the numerical solution and an adaptively refined mesh
with 15407 elements. We can see that the solution oscillates on the edge of the
obstacle but it is smooth away from the obstacle. This feature is caught by the
algorithm. The mesh is adaptively refined around the obstacle and is coarse
away from the obstacle.

\begin{table}
\begin{center}
\caption{Comparison of numerical results using adaptive mesh and uniform
mesh refinements for Example 1.}
\begin{tabular}{r|c|c|r|c|c}
\hline\hline
\multicolumn{3}{c|}{Adaptive mesh} &
\multicolumn{3}{c}{Uniform mesh}\\
\hline\hline
$\text{DoF}_h$ & $e_h$ & $\epsilon_h $ & $\text{DoF}_h$ & $e_h$ &
$\epsilon_h$\\
\hline
1745 & 0.4632 & 3.9693 &   1745 & 0.4632 & 3.9693 \\
\hline
2984 & 0.3256  &  2.6723 & 2667 & 0.3717 & 3.2365 \\
\hline
5559 & 0.2253 & 1.9293 & 5857 & 0.2494 & 2.0625\\
\hline
9030 & 0.1778 & 1.5054 & 10630 & 0.1851 & 1.5856\\
\hline
15407 & 0.1384 & 1.1686 & 20224 & 0.1330 & 1.1257\\
\hline
\end{tabular}
\end{center}
\end{table}

\begin{figure}
\centering
\includegraphics[width=0.5\textwidth]{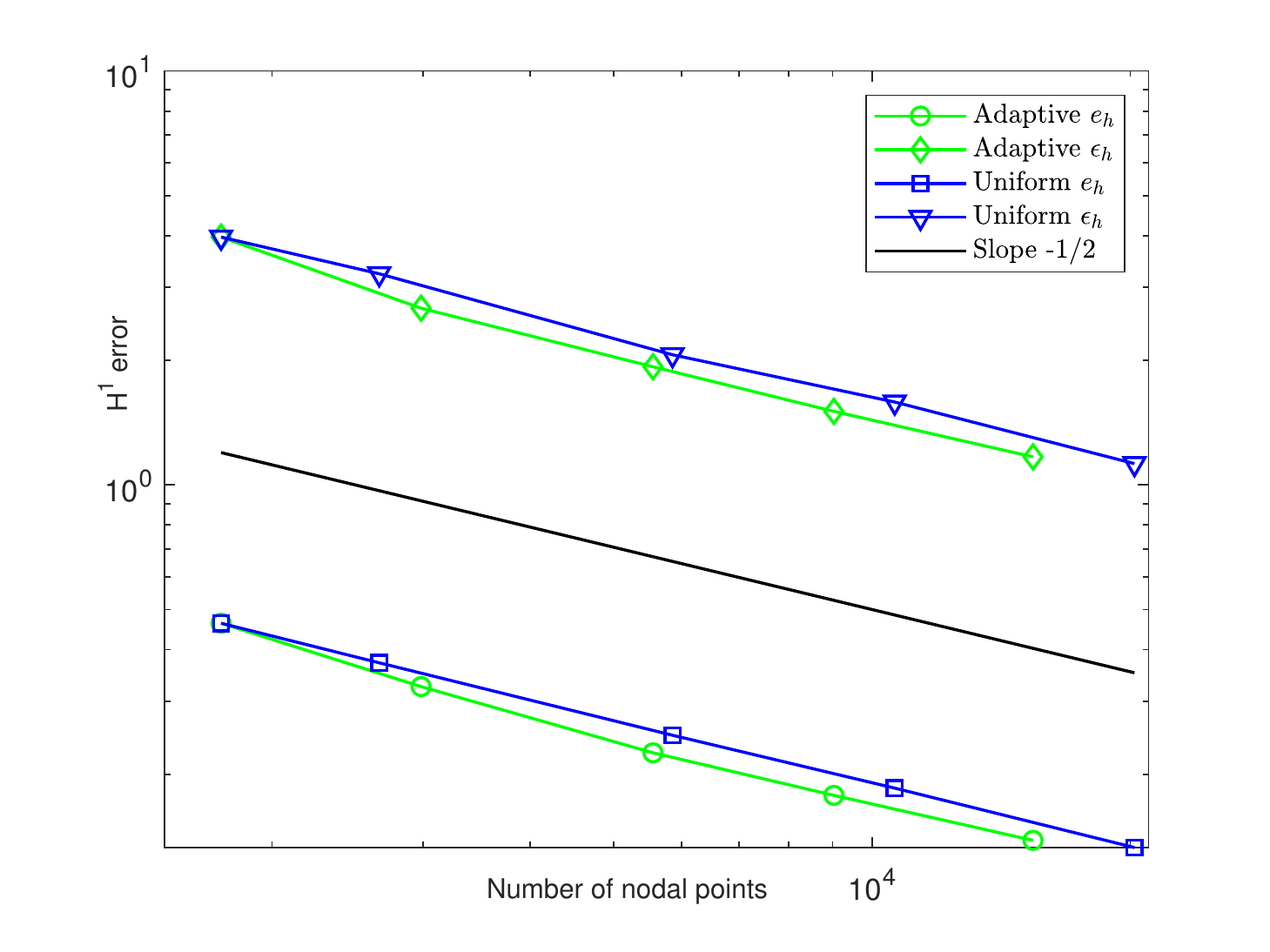}
\caption{Quasi-optimality of the a priori and a posteriori error
estimates for Example 1.}
\label{fig2}
\end{figure}

\begin{figure}
\centering
\includegraphics[width=0.36\textwidth]{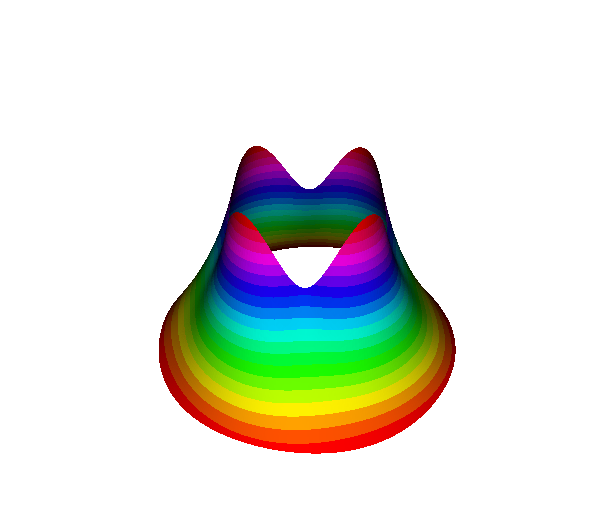}
\includegraphics[width=0.36\textwidth]{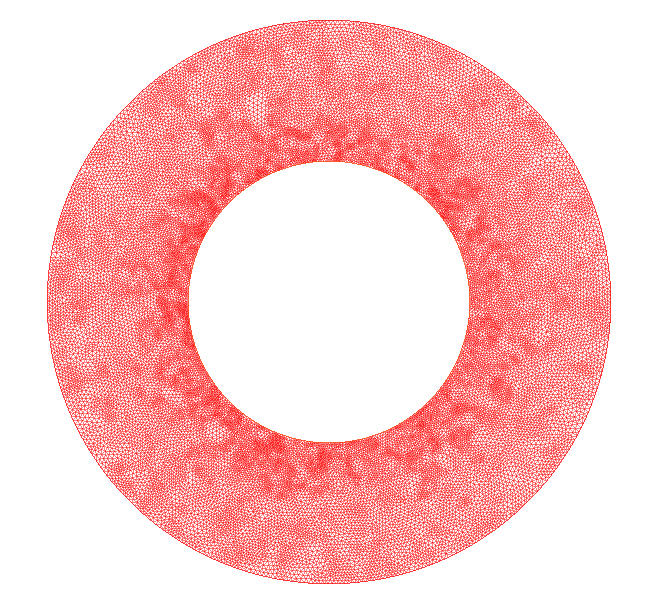}	
\caption{The numerical solution of Example 1. (left) the magnitude of the
numerical solution; (left) an adaptively refined mesh with 15407 elements.}
\label{fig3}
\end{figure}

{\em Example 2.} This example does not have an analytical solution. We consider
a compressional plane incident wave $\boldsymbol{u}^{\rm inc}(\boldsymbol x)
=\boldsymbol{d} e^{{\rm i}\,\kappa_1 \boldsymbol{x}\cdot \boldsymbol{d}}$ with
the incident direction $\boldsymbol{d}=(1,0)^\top$. The obstacle is U-shaped
and is contained in the rectangular domain $\left\{\boldsymbol x\in\mathbb R^2:
-2<x<2.2, -0.7<y<0.7\right\}$. Due to the problem geometry, the
solution contains singularity around the corners of the obstacle. We take $R=3, \hat R=2.31$. Figure \ref{fig4} shows the curve of $\log \epsilon_h$ versus
$\log \text{DoF}_h$ at different frequencies $\omega=1, \pi, 2\pi$. 
It demonstrates that the decay of the a posteriori error estimates are 
$O\big(\text{DoF}_h^{-1/2}\big)$. Figure \ref{fig5} plots the contour of the
magnitude of the numerical solution and its corresponding mesh by using the
parameters $\omega=\pi, \lambda=2, \mu=1$. Again, the algorithm does capture the
solution feature and adaptively refines the mesh around the corners of the
obstacle where the solution displays singularity. 

\begin{figure}
\centering
\includegraphics[width=0.5\textwidth]{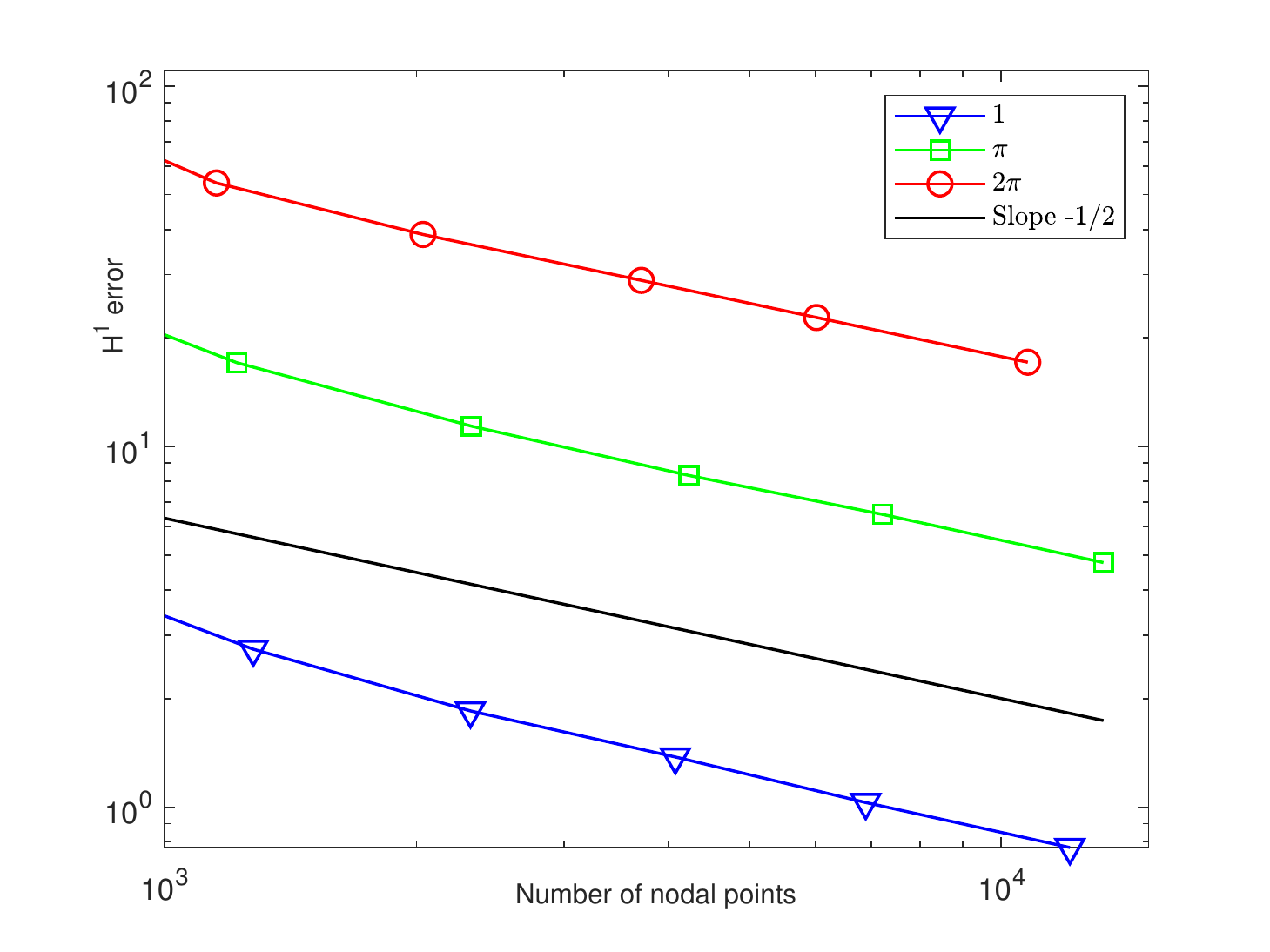}
\caption{Quasi-optimality of the a posteriori error estimates with
different frequencies for Example 2.  }
\label{fig4}
\end{figure}

\begin{figure}
\centering
\includegraphics[width=0.36\textwidth]{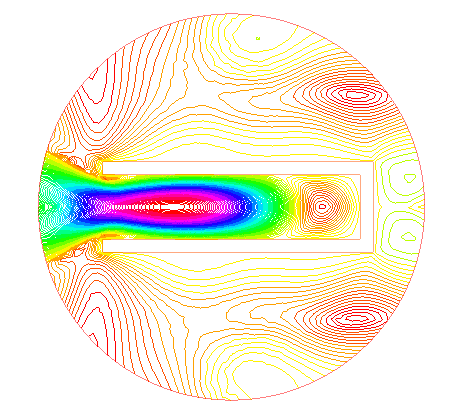}	
\includegraphics[width=0.36\textwidth]{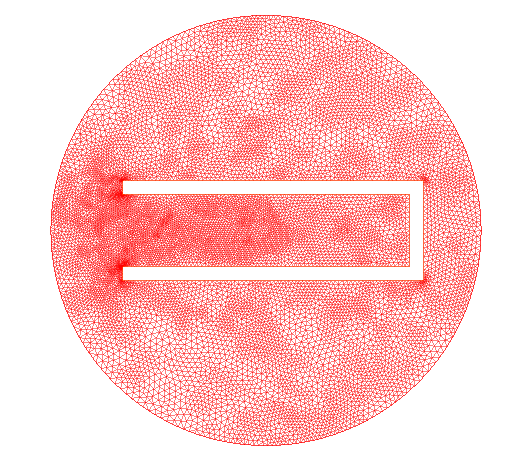}		
\caption{The numerical solution of Example 2. (left) The contour plot of the
magnitude of the solution; (right) an adaptively refined mesh with
12329 elements}
\label{fig5}
\end{figure}		

\section{Conclusion}\label{Section_c}

In this paper, we present an adaptive finite element DtN method for the elastic
obstacle scattering problem. Based on the Helmholtz decomposition, a
new duality argument is developed to obtain the a posteriori error estimate. It
not only takes into account of the finite element discretization error but also
includes the truncation error of the DtN operator. We show that the truncation
error decays exponentially with respect to the truncation parameter. The
posteriori error estimate for the solution of the discrete problem serves as a
basis for the adaptive finite element approximation. Numerical results show that
the proposed method is accurate and effective. This work provides a viable
alternative to the adaptive finite element PML method to solve the elastic
obstacle scattering problem. The method can be applied to solve many other wave
propagation problems where the transparent boundary conditions are used for
open domain truncation. Future work includes extending our analysis to the
adaptive finite element DtN method for solving the three-dimensional elastic
obstacle scattering problem, where a more complicated transparent boundary
condition needs to be considered. 

\appendix

\section{Transparent boundary conditions}\label{Section_ap}

In this section, we show the transparent boundary conditions for the scalar
potential functions $\phi, \psi$ and the displacement of the scattered field
$\boldsymbol u$ on $\partial B_R$. 

In the exterior domain $\mathbb R^2\setminus\overline B_R$, the solutions of
the Helmholtz equations \eqref{he} have the Fourier series expansions in the
polar coordinates: 
\begin{equation}\label{Fourier_Expan}
\phi(r,\theta)=\sum_{n\in\mathbb{Z}}\frac{H^{(1)}_n(\kappa_1 r)}{H_{n}^{(1)}
(\kappa_1 R)} \phi_n(R)e^{{\rm i}n\theta},\quad
\psi(r,\theta)=\sum_{n\in\mathbb{Z}}\frac{H^{(1)}_n(\kappa_2
r)}{H_{n}^{(1)}(\kappa_2 R)}\psi_n(R)e^{{\rm i}n\theta},
\end{equation} 
where $H_n^{(1)}$ is the Hankel function of the first kind with order $n$.
Taking the normal derivative of \eqref{Fourier_Expan}, we obtain the transparent
boundary condition for the scalar potentials $\phi, \psi$ on $\partial B_R$:
\begin{eqnarray}\label{ptbc}
\partial_r \phi=\mathscr{T}_1 \phi:=\sum\limits_{n\in\mathbb{Z}}
\frac{\kappa_1 H_n^{(1)'}(\kappa_1 R)}{H_{n}^{(1)}(\kappa_1 R)}
\phi_n(R)e^{{\rm i}n\theta},\quad
\partial_r \psi=\mathscr{T}_2
\psi:=\sum\limits_{n\in\mathbb{Z}} \frac{\kappa_2 H_n^{(1)'}(\kappa_2
R)}{H_{n}^{(1)}(\kappa_2 R)}
\psi_n(R)e^{{\rm i}n\theta}.		
\end{eqnarray}

The polar coordinates $(r, \theta)$ are related to the Cartesian coordinates
$\boldsymbol x=(x, y)$ by $x=r\cos\theta, y=r\sin\theta$ with the local
orthonormal basis $\{\boldsymbol e_r, \boldsymbol e_\theta\}$, where
$\boldsymbol e_r=(\cos\theta, \sin\theta)^\top, \boldsymbol
e_\theta=(-\sin\theta, \cos\theta)^\top$.

Define a boundary operator for the displacement of the scattered wave
\[
 \mathscr B\boldsymbol u=\mu\partial_r \boldsymbol
u+(\lambda+\mu)(\nabla\cdot\boldsymbol u) \boldsymbol
e_r\quad\text{on}\,\partial
B_R. 
\]
Based on the Helmholtz decomposition \eqref{he} and the transparent
boundary condition \eqref{ptbc}, it is shown in \cite{LWWZ-ip16} that the
scattered field $\boldsymbol u$ satisfies the transparent boundary condition 
\begin{equation}\label{utbc}
\mathscr B\boldsymbol u=(\mathscr T\boldsymbol u)(R, \theta):=\sum_{n\in\mathbb
Z}M_n\boldsymbol u_n(R) e^{{\rm i}n\theta}\quad\text{on}\,\partial B_R,
\end{equation}
where 
\[
 \boldsymbol u(R, \theta)=\sum_{n\in\mathbb Z}\boldsymbol u_n(R) e^{{\rm
i}n\theta}=\sum_{n\in\mathbb Z}\big(u_n^r(R)\boldsymbol e_r +
u_n^\theta(R)\boldsymbol e_\theta\big) e^{{\rm i}n\theta}
\]
and $M_n$ is a $2\times 2$ matrix defined by 
\begin{equation}\label{Mn}
 M_n=\begin{bmatrix}
      M_{11}^{(n)} & M_{12}^{(n)}\\
      M_{21}^{(n)} & M_{22}^{(n)}
     \end{bmatrix}
=\frac{1}{\Lambda_n(R)}\begin{bmatrix}
                         N_{11}^{(n)} & N_{12}^{(n)}\\
                         N_{21}^{(n)} & N_{22}^{(n)}
                        \end{bmatrix}.
\end{equation}
Here 
\begin{equation}\label{alpha_jn}
 \Lambda_n(R)=\left(\frac{n}{R}\right)^2-\alpha_{1n}(R)\alpha_{2n}(R),
\quad  \alpha_{jn}(R)=\frac{\kappa_j H_n^{(1)'}(\kappa_j
R)}{H_{n}^{(1)}(\kappa_j R)},
\end{equation}
and
\begin{align*}
N_{11}^{(n)}
= &\mu\left(\frac{n}{R}\right)^2\Big(\alpha_{2n}(R)-\frac{1}{R}\Big)
-\alpha_{2n}(R)\bigg[(\lambda+2\mu)\frac{\kappa_1^2
H_{n}^{(1)''}(\kappa_1 R)}{H_{n}^{(1)}(\kappa_1 R))}\\
&\qquad +(\lambda+\mu)\Big(\frac{1}{R}\alpha_{1n}(R)-\left(\frac{n}{R}
\right)^2\Big)\bigg], \\
N_{12}^{(n)}=&\mu\frac{{\rm
i}n}{R}\alpha_{1n}(R)\Big(\alpha_{2n}(R)-\frac{1}{R}\Big)
-\frac{{\rm i}n}{R} \bigg[(\lambda+2\mu)\frac{\kappa_1^2
H_{n}^{(1)''}(\kappa_1 R)}{H_{n}^{(1)}(\kappa_1 R))}\\
&\qquad +(\lambda+\mu)\Big(\frac{1}{R}\alpha_{1n}(R)-\left(\frac{n}{R}
\right)^2\Big)\bigg],\\
N_{21}^{(n)} = & -\mu\frac{{\rm
i}n}{R}\alpha_{2n}(R)\Big(\alpha_{1n}(R)-\frac{1}{R}\Big)
+\mu\frac{{\rm i}n}{R}\frac{\kappa_2^2
H_{n}^{(1)''}(\kappa_2 R)}{H_{n}^{(1)}(\kappa_2 R)},\\
N_{22}^{(n)} = & \mu\left(\frac{n}{R}\right)^2
\Big(\alpha_{1n}(R)-\frac{1}{R}\Big)
-\mu\alpha_{1n}(R)\frac{\kappa_2^2
H_{n}^{(1)''}(\kappa_2 R)}{H_{n}^{(1)}(\kappa_2 R)}.
\end{align*}

The matrix entries $N_{ij}^{(n)}, i,j=1, 2$ can be further simplied. Recall that
the Hankel function $H_n^{(1)}(z)$ satisfies the Bessel differential equation
\[
z^2 H_n^{(1)''}(z)+z H_n^{(1)'} (z)+(z^2-n^2) H_{n}^{(1)}(z)=0.
\]
We have from straighforward calculations that 
\begin{align*}
N_{11}^{(n)} &=-\alpha_{2n}(R)\Bigg[(\lambda+2\mu)\bigg[-\frac{1}{R^2}
\Big(\kappa_j R \frac{H^{(1)'}_n(\kappa_j
R)}{H^{(1)}_n(\kappa_j R)}+\left((\kappa_j R)^2-n^2\right)\Big)\bigg]\\
&\qquad+(\lambda+\mu)\left(\frac{1}{R}\alpha_{1n}(R)-\left(\frac{n}{R}
\right)^2\right)\Bigg] +\mu\left(\frac{n}{R}\right)^2\left(\alpha_{2n}
(R)-\frac{1}{R}\right)\\
&=-\alpha_{2n}(R)\Bigg[-\left(\frac{\lambda+2\mu}{R}\right)\alpha_{1n}
(R)-(\lambda+2\mu)\kappa_1^2+(\lambda+2\mu)\left(\frac{n}{R}\right)^2
+\left(\frac{\lambda+\mu}{R}\right)\alpha_{1n}(R)\\
&\qquad -(\lambda+\mu)\left(\frac{n}{R}\right)^2\Bigg]
+\mu\left(\frac{n}{R}\right)^2\left(\alpha_{2n}(R)-\frac{1}{R}\right)\\	
&=-\frac{\mu}{R}\left[\left(\frac{n}{R}\right)^2-\alpha_{1n}(R)\alpha_{2n}
(R)\right]+\alpha_{2n}(R)\omega^2\\
&= -\frac{\mu}{R}\Lambda_n(R)+\alpha_{2n}(R)\omega^2,
\end{align*}
\begin{align*}	
N_{12}^{(n)} &= -\frac{{\rm
i}n}{R}\Bigg[(\lambda+2\mu)\bigg[-\frac{1}{R^2}\Big(\kappa_j R
\frac{H^{(1)'}_n(\kappa_j R)}{H^{(1)}_n(\kappa_j
R)}+\left((\kappa_j R)^2-n^2\right)\Big)\bigg]\\	
&\qquad+(\lambda+\mu)\left(\frac{1}{R}\alpha_{1n}(R)-\left(\frac{n}{R}
\right)^2\right)\Bigg] +\frac{{\rm
i}n\mu}{R}\alpha_{1n}(R)\alpha_{2n}(R)-\mu\frac{{\rm
i}n}{R^2}\alpha_{1n}(R)\\
&= -\frac{{\rm
i}n}{R}\left[-\frac{\mu}{R}\alpha_{1n}(R)+\mu\left(\frac{n}{R}\right)^2
-(\lambda+2\mu)\kappa_1^2\right]
+\frac{{\rm i}n\mu}{R}\alpha_{1n}(R)\alpha_{2n}(R)-\frac{{\rm
i}n}{R^2}\mu\alpha_{1n}(R)\\
&= -\frac{{\rm i}n\mu}{R}\Lambda_n(R)+\frac{{\rm
i}n}{R}\omega^2,
\end{align*}
\begin{align*}
N_{21}^{(n)} &= -\mu\frac{{\rm
i}n}{R}\alpha_{2n}(R)\alpha_{1n}(R)+\frac{{\rm
i}n\mu}{R^2}\alpha_{2n}(R) +\mu\frac{{\rm
i}n}{R}\left(\frac{-1}{R^2}\right)\left(R\alpha_{2n}(R)+(\kappa_2 R)^2-
n^2\right)\\
&= -\mu\frac{{\rm i}n}{R}\alpha_{1n}(R)\alpha_{2n}(R)+\frac{{\rm
i}n\mu}{R^2}\alpha_{2n}(R) -\mu\frac{{\rm
i}n}{R^2}\alpha_{2n}(R)-\frac{{\rm i}n\mu}{R}\kappa_2^2+{\rm
i}\mu\left(\frac{n}{R}\right)^3\\
&= \frac{{\rm i}\mu n}{R}\Lambda_n(R)-\frac{{\rm
i}n}{R}\omega^2,
\end{align*}
\begin{align*}
N_{22}^{(n)} &=
\mu\left(\frac{n}{R}\right)^2\alpha_{1n}(R)-\frac{\mu}{R}\left(\frac{n}{R}
\right)^2-\mu\alpha_{1n}(R)\frac{-1}{R^2}\left(R\alpha_{2n}(R)+(\kappa_2
R)^2-n^2\right)\\
&=\mu\left(\frac{n}{R}\right)^2\alpha_{1n}(R)-\frac{\mu}{R}\left(\frac{n}{R
}\right)^2+\frac{\mu}{R}\alpha_{1n}(R)\alpha_{2n}(R)
+\alpha_{1n}(R)\mu\kappa_2^2-\mu\left(\frac{n}{R}\right)^2\alpha_{1n}(R)\\
&=-\frac{\mu}{R}\left(\left(\frac{n}{R}\right)^2-\alpha_{1n}(R)\alpha_{2n}
(R)\right)+\alpha_{1n}(R)\omega^2\\
&=-\frac{\mu}{R}\Lambda_n(R)+\alpha_{1n}(R)\omega^2.
\end{align*}
Substituting the above into \eqref{utbc}, we obtain
\begin{align*}
\mathscr B\boldsymbol{u} = \mathscr{T}\boldsymbol{u} 
=&\sum\limits_{n\in\mathbb{Z}}\frac{1}{\Lambda_n}\bigg\{
\Big[\Big(-\frac{\mu}{R}\Lambda_n(R)+\alpha_{2n}(R)\omega^2\Big)
u_n^{r}(R)\\
&+\Big(-\frac{{\rm i}n\mu}{R}\Lambda_n(R)+\frac{{\rm i}n}{R}\omega^2\Big)
u_n^{\theta}(R)\Big] \boldsymbol{e}_r + \Big[\Big(\frac{{\rm i}\mu
n}{R}\Lambda_n(R)-\frac{{\rm i}n}{R}\omega^2 \Big)u_n^{r}(R)\\
&+\Big(-\frac{\mu}{R}\Lambda_n(R)+\alpha_{1n}(R)\omega^2
\Big)u_n^{\theta}(R)\Big]\boldsymbol{e}_{\theta} \bigg\}
e^{{\rm i}n\theta}.
\end{align*}

\begin{lemma}\label{Lamn}
Let $z>0$. For sufficiently large $|n|$, $\Lambda_n(z)$ admits the following
asymptotic property
\[
\Lambda_n(z)=\frac{1}{2}(\kappa_1^2+\kappa_2^2)+\mathcal{O}\Big(\frac{1}
{|n|}\Big).
\]
\end{lemma}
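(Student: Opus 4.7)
The plan is to read off the large-$|n|$ asymptotics of $\alpha_{jn}(z)$ from the Bessel/Hankel asymptotics already used in the proof of Lemma~\ref{lemmaHankel}, plug these into the definition of $\Lambda_n(z)$, and verify that the leading $n^2/z^2$ term cancels, leaving $\tfrac12(\kappa_1^2+\kappa_2^2)$ plus lower order.

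First I would exploit the fact that for fixed $w>0$ and $|n|\to\infty$ the Bessel asymptotics $J_n(w)\sim\frac{1}{\sqrt{2\pi n}}\bigl(\frac{ew}{2n}\bigr)^n$ and $Y_n(w)\sim -\sqrt{\frac{2}{\pi n}}\bigl(\frac{ew}{2n}\bigr)^{-n}$ force $|J_n(w)/Y_n(w)|=O\bigl((ew/2n)^{2n}\bigr)\to 0$. Hence the Hankel quotient satisfies
\[
\frac{w H_n^{(1)\prime}(w)}{H_n^{(1)}(w)} = \frac{w(J_n'+iY_n')(w)}{(J_n+iY_n)(w)} = \frac{w Y_n'(w)}{Y_n(w)} + O\bigl((ew/2n)^{2n}\bigr),
\]
and the asymptotic $\tfrac{w Y_n'(w)}{Y_n(w)}\sim \tfrac{w^2}{2(n-1)}-n$ (already invoked in Lemma~\ref{lemmaHankel}) gives
\[
\alpha_{jn}(z)=\frac{1}{z}\cdot\frac{(\kappa_j z) H_n^{(1)\prime}(\kappa_j z)}{H_n^{(1)}(\kappa_j z)}=\frac{\kappa_j^2 z}{2(n-1)}-\frac{n}{z}+O\!\left(\frac{1}{|n|^{2}}\right),
\]
taking $w=\kappa_j z$ with $z$ fixed.

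Next I would multiply the two expressions for $\alpha_{1n}(z)$ and $\alpha_{2n}(z)$:
\[
\alpha_{1n}(z)\alpha_{2n}(z)=\frac{n^2}{z^2}-\frac{(\kappa_1^2+\kappa_2^2)n}{2(n-1)}+\frac{\kappa_1^2\kappa_2^2 z^2}{4(n-1)^2}+O\!\left(\frac{1}{|n|}\right),
\]
where the leading contribution comes from the product $(-n/z)(-n/z)=n^2/z^2$ and the next order from the two cross terms. Subtracting from $n^2/z^2$, the $n^2/z^2$ terms cancel exactly and I obtain
\[
\Lambda_n(z)=\frac{n^2}{z^2}-\alpha_{1n}(z)\alpha_{2n}(z)=\frac{\kappa_1^2+\kappa_2^2}{2}\cdot\frac{n}{n-1}+O\!\left(\frac{1}{|n|}\right)=\frac{\kappa_1^2+\kappa_2^2}{2}+O\!\left(\frac{1}{|n|}\right),
\]
using $n/(n-1)=1+O(1/|n|)$. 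The symmetry $J_{-n}=(-1)^n J_n$, $Y_{-n}=(-1)^nY_n$ makes the argument valid for large negative $n$ as well.

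The only delicate point is keeping the orders straight: the two $O(1/|n|^2)$ corrections in $\alpha_{jn}$ could in principle interact with the $-n/z$ terms to produce new $O(1/|n|)$ contributions, and one must check these are absorbed into the stated error. The routine but careful bookkeeping of expansions through order $1/|n|$ in the product $\alpha_{1n}\alpha_{2n}$ is therefore the main (and only) obstacle; everything else is direct substitution.
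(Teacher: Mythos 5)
Your proposal is correct and follows essentially the same route as the paper: both arguments reduce to the large-$|n|$ expansion $\alpha_{jn}(z)=-\tfrac{|n|}{z}+\tfrac{\kappa_j^2 z}{2|n|}+\mathcal{O}(|n|^{-2})$ and the observation that the $n^2/z^2$ terms cancel in $\Lambda_n(z)=(n/z)^2-\alpha_{1n}(z)\alpha_{2n}(z)$, leaving the cross terms $\tfrac12(\kappa_1^2+\kappa_2^2)$ up to $\mathcal{O}(1/|n|)$. The only cosmetic difference is that the paper quotes the asymptotic of $H_n^{(1)\prime}/H_n^{(1)}$ directly from Watson, whereas you rederive it from the $J_n$, $Y_n$ asymptotics (which also tacitly uses that $J_n'/Y_n'$, not just $J_n/Y_n$, is exponentially small — a routine point); your bookkeeping of the $\mathcal{O}(|n|^{-2})$ corrections against the $-n/z$ factors is handled correctly.
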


\begin{proof}
 Using the asymptotic expansions of the Hankel functions \cite{W-95}
 \[ 
\frac{H_n^{(1)'}(z)}{H_n^{(1)}(z)}=-\frac{|n|}{z}+\frac{z}{2|n|}+\mathcal{O}
\Big(\frac{1} {|n|^2}\Big),
 \]
 we have 
 \[
  \alpha_{jn}(z)=\frac{\kappa_j H_n^{(1)'}(\kappa_j
z)}{H_{n}^{(1)}(\kappa_j z)}=-\frac{|n|}{z}+\frac{\kappa^2_j
z}{2|n|}+\mathcal{O}
\Big(\frac{1} {|n|^2}\Big).
 \]
A simple calcuation yields that 
\[
 \Lambda_n(z)=\left(\frac{n}{z}\right)^2-\alpha_{1n}(z)\alpha_{2n}(z)=\frac{1}{2
}(\kappa_1^2+\kappa_2^2)+\mathcal{O}\Big(\frac{1} {|n|}\Big),
\]
which completes the proof.
\end{proof}


\begin{thebibliography}{99}

\bibitem{BA-73}
I. Babu\v{s}ka and A. Aziz, Survey Lectures on Mathematical Foundations of the
Finite Element Method, in the Mathematical Foundation of the Finite Element
Method with Application to the Partial Differential Equations, ed. by
A.Aziz, Academic Press, New York, 1973, 5--359.

\bibitem{BW-sinum78}
I.~ Babu\v{s}ka and C. Werner, Error estimates for adaptive finite element
computations, SIAM J. Numer. Anal., 15 (1978), 736--754.

\bibitem{BHSY-jmpa18}
G. Bao, G. Hu, J. Sun, and T. Yin, Direct and inverse elastic scattering from
anisotropic media, J. Math. Pures Appl., 117 (2018), 263--301.

\bibitem{BLW-mc10}
G. Bao, P. Li, and H. Wu, An adaptive edge element method with perfectly matched
absorbing layers for wave scattering by periodic structures, Math. Comp., 79
(2010), 1--34. 

\bibitem{BW-sinum05}
G. Bao and H. Wu, Convergence analysis of the perfectly matched layer problems
for time harmonic Maxwell's equations, SIAM J. Numer. Anal., 43 (2005),
2121--2143.

\bibitem{BT-cpam80}
A. Bayliss and E. Turkel, Radiation boundary conditions for numerical simulation
of waves, Comm. Pure Appl. Math., 33 (1980), 707--725.

\bibitem{B-jcp94}
J. P. Berenger, A perfectly matched layer for the absorption of electromagnetic
waves, J. Comput. Phys., 114 (1994), 185--200.

\bibitem{BP-mc10}
J. H. Bramble, J. E. Pasciak, and D. Trenev, Analysis of a finite PML
approximation to the three dimensional elastic wave scattering problem, Math.
Comp., 79 (2010), 2079--2101.

\bibitem{CC-mc08}
J. Chen and Z. Chen, An adaptive perfectly matched layer technique for 3-D
time-harmonic electromagnetic scattering problems, Math. Comp., 77 (2008),
673--698.

\bibitem{CW-sinum03}
Z. Chen and H. Wu, An adaptive finite element method with perfectly matched
absorbing layers for the wave scattering by periodic structures, SIAM J.
Numer. Anal., 41 (2003), 799--826.

\bibitem{CL-sinum05}
Z. Chen and X. Liu, An adaptive perfectly matched layer technique for
time-harmonic scattering problems, SIAM J. Numer. Anal., 43 (2005), 645--671.

\bibitem{CXZ-mc16}
Z. Chen, X. Xiang, and X. Zhang, Convergence of the PML method for elastic wave
scattering problems, Math. Comp., 85 (2016), 2687--2714.


\bibitem{CL-jca96}
W. Chew and Q. Liu, Perfectly matched layers for elastodynamics: a new
absorbing boundary condition, J. Comput. Acoust., 4 (1996), 341--359. 


\bibitem{CW-motl94}
W. Chew and W. Weedon, A 3D perfectly matched medium for modified Maxwell's
equations with stretched coordinates, Microwave Opt. Techno. Lett., 13
(1994), 599--604.

\bibitem{C-88}
P. G. Ciarlet, Mathematical Elasticity, vol. I : Three-Dimensional Elasticity,
Studies in Mathematics and its Applications, North-Holland, Amsterdam, 1988.


\bibitem{CT-g01}
F. Collino and C. Tsogka, Application of the PML absorbing layer model to the
linear elastodynamics problem in anisotropic heterogeneous media, Geophysics,
66 (2001), 294--307. 


\bibitem{CK-83}
D. Colton and R. Kress, Integral Equation Methods in Scattering Theory, Wiley,
New York, 1983. 

\bibitem{EM-mc77}
B. Engquist and A. Majda, Absorbing boundary conditions for the numerical
simulation of waves, Math. Comp., 31 (1977), 629--651.

\bibitem{GG-wm03}
G. K. G\"{a}chter and M. J. Grote, Dirichlet-to-Neumann map for
three-dimensional elastic waves, Wave Motion, 37 (2003), 293--311.

\bibitem{GK-wm90}
D. Givoli and J. B. Keller, Non-reflecting boundary conditions for elastic
waves, Wave Motion, 12 (1990), 261--279.

\bibitem{HSB-jasa96}
F. D. Hastings, J. B. Schneider, and S. L. Broschat, Application of the
perfectly matched layer (PML) absorbing boundary condition to elastic wave
propagation, J. Acoust. Soc. Am., 100 (1996), 3061--3069.

\bibitem{H-jnm12}
F. Hecht, New development in FreeFem++, J. Numer. Math., 20 (2012), 251--265.

\bibitem{HSZ-sima03}
T. Hohage, F. Schmidt, and L. Zschiedrich, Solving time-harmonic scattering
problems based on the pole condition. II: Convergence of the PML method, SIAM J.
Math. Anal., 35 (2003), 547--560.

\bibitem{GK-jcp95}
M. Grote and J. Keller, On nonreflecting boundary conditions, J. Comput.
Phys., 122 (1995), 231--243.

\bibitem{JLLZ-jsc17}
X. Jiang, P. Li, J. Lv, and W. Zheng, An adaptive finite element method for the
wave scattering with transparent boundary condition, J. Sci. Comput., 72
(2017), 936--956.

\bibitem{JL-cicp17}
X. Jiang and P. Li, An adaptive finite element PML method for the
acoustic-elastic interaction in three dimensions, Commun. Comput. Phys.,
22 (2017), 1486--1507.

\bibitem{JLWWZ-18}
X. Jiang, P. Li, J. Lv, Z. Wang, H. Wu, and W. Zheng, An adaptive finite element
DtN method for Maxwell's equations in biperiodic structures, arXiv:1811.12449. 

\bibitem{JLZ-cicp13}
X. Jiang, P. Li, and W. Zheng, Numerical solution of acoustic scattering by an
adaptive DtN finite element method, Commun. Comput. Phys., 13 (2013),
1277--1244.

\bibitem{JLLZ-m2na17}
X. Jiang, P. Li, J. Lv, and W. Zheng, An adaptive finite element PML method for
the elastic wave scattering problem in periodic structures, ESAIM: Math. Model.
Numer. Anal., 51 (2017), 2017--2047.

\bibitem{JLLZ-cms18}
X. Jiang, P. Li, J. Lv, and W. Zheng, Convergence of the PML solution for
elastic wave scattering by biperiodic structures, Comm. Math. Sci., 16 (2018),
985--1014.


\bibitem{KT-gji03}
D. Komatitsch and J. Tromp, A pefectly matched layer absorbing boundary
condition for the second-order seismic wave equation, Geophys. J. Int., 154
(2003), 146--153.

\bibitem{LL-86}
 L. D. Landau and E. M. Lifshitz, Theory of Elasticity, Theory of Elasticity,
Oxford: Pergamon Press, 1986. 

\bibitem{LWWZ-ip16}
P. Li, Y. Wang, Z. Wang, and Y. Zhao, Inverse obstacle scattering for elastic
waves, Inverse Problems, 32 (2016), 115018.

\bibitem{LY-ipi}
P. Li and X. Yuan, Inverse obstacle scattering for elastic waves in three
dimensions, Inverse Problems and Imaging, to appear. 

\bibitem{MK-jcp04}
M. Grote and C. Kirsch, Dirichlet-to-Neumann boundary conditions for multiple
scattering problems, J. Comput. Phys., 201 (2004), 630--650.

\bibitem{M-03}
P. Monk, Finite Element Methods for Maxwell's Equations, Oxford University
Press, 2003.

\bibitem{N-01}
J.-C. N\'{e}d\'{e}lec, Acoustic and Electromagnetic Equations Integral
Representations for Harmonic Problems, Springer-Verlag, New York, 2001.

\bibitem{SZ_mc90}
L. Scott and S. Zhang, Finite element interpolation of nonsmooth functions
satisfying boundary conditions, Math. Comp., 54 (1990), 483--493. 

\bibitem{S-mc74}
A. H. Schatz, An observation concerning Ritz--Galerkin methods with indefinite
bilinear forms, Math. Comp., 28 (1974), 959--962. 

\bibitem{T-an99}
H. Thomas, Radiation boundary conditions for the numerical simulation of
waves, Acta Numer., 8 (1999), 47--106.

\bibitem{TY-anm98}
E. Turkel and A. Yefet, Absorbing PML boundary layers for wave-like equations,
Appl. Numer. Math., 27 (1998), 533--557.

\bibitem{WBLLW-sinum15}
Z. Wang, G. Bao, J. Li, P. Li, and H. Wu, An adaptive finite element method for
the diffraction grating problem with transparent boundary condition, SIAM J.
Numer. Anal., 53 (2015), 1585--1607.

\bibitem{W-95}
G.N. Watson, A Treatise on the Theory of Bessel Functions, Cambridge
University Press, 1995.
\end{thebibliography}
\end{document}